\documentclass[11pt]{article}

\usepackage{a4wide}
\usepackage{bbold,dsfont}
\usepackage{amsmath, amsthm, amssymb}
\usepackage{hyperref,natbib}
\usepackage[draft]{todonotes}
\usepackage{enumerate}
\parindent 0in
\parskip 1ex

\bibliographystyle{plainnat}

\usepackage{color}

\newcounter{rot}

\newcommand{\ignore}[1]{}

\newcommand{\BigO}{\mathcal O}
\newcommand{\bigO}{\BigO}

\newcommand{\ink}{\operatorname{ink}}

\newcommand{\indic}[1]{\mathds{1}_{\left\{#1\right\}}}

\theoremstyle{plain}
\newtheorem{theorem}{Theorem}
\newtheorem{defn}[theorem]{Definition}
\newtheorem{lemma}[theorem]{Lemma}
\newtheorem{corollary}[theorem]{Corollary}
\newtheorem{proposition}[theorem]{Proposition}
\newtheorem{remark}[theorem]{Remark}

\newtheorem*{claim*}{Claim}

\title{Mixing of the symmetric beta-binomial splitting process on arbitrary~graphs}

\author{Richard Pymar\thanks{School of Computing and Mathematical Sciences, Birkbeck, University of London, London, WC1E
7HX, UK. {\tt r.pymar@bbk.ac.uk}}
\and Nicol\'as Rivera\thanks{Facultad de Ciencias, Universidad de Valpara\'iso, Valpara\'iso, 2360102, Chile.  {\tt nicolas.rivera@uv.cl}\\
N.R.\! was supported by ANID FONDECYT 3210805 and ANID SIA 85220033}
}

\begin{document}
\maketitle
\begin{abstract}We study the mixing time of the symmetric beta-binomial splitting process on finite weighted connected graphs $G=(V,E,\{r_e\}_{e\in E})$ with vertex set $V$, edge set $E$ and positive edge-weights $r_e>0$ for $e\in E$. This is an interacting particle system with a fixed number of particles that updates through vertex-pairwise interactions which redistribute particles. We show that the mixing time of this process can be upper-bounded in terms of the maximal expected meeting time of two independent random walks on $G$. Our techniques involve using a process similar to the chameleon process invented by \cite{morris} to bound the mixing time of the exclusion process.  \end{abstract}

\section{Introduction}
In the field of econophysics, interacting particle systems have been widely used to analyse the dynamics of wealth held by agents within a network, providing insights into the distribution and flow of money within the system~\citep{yakovenko2009colloquium}. These are typically characterised by pairwise interactions between agents (represented by vertices in a graph) resulting in a redistribution of the wealth they hold (represented by particles on the vertices). 

One class of such systems which has found applications in econophysics are reshuffling models in which  each agent in an interacting pair  receives a random fraction of the total wealth they hold. In the uniform reshuffling model introduced in~\citet{dragulescu2000statistical} and discussed rigorously in~\citet{lanchier2018rigorous}, the random fraction is chosen uniformly. 

In this paper, we introduce and analyse the mixing time of the symmetric beta-binomial splitting process: a continuous-time interacting particle system on a finite connected (weighted) graph with a conservation property. Informally, the process updates by choosing randomly an edge from the graph, and redistributing the particles on the vertices of the edge according to a beta-binomial distribution. This process is a generalisation of the uniform reshuffling model, is a discrete-space version of a Gibbs sampler considered in~\citet{10.1214/20-AOP1428} and is related to the binomial splitting process of~\citet{QS} (sometimes called the binomial reshuffling model \citep{cao2022binomial}), and the KMP model of energy transport \citep{kipnis1982heat}.

Our focus is to provide general upper bounds on the mixing time of the symmetric beta-binomial splitting process on any connected graph. We achieve this through use of a chameleon process, a process which so far has only been used to bound the mixing time of exclusion processes~\citep{CP,HP,morris,olive}. We demonstrate how a chameleon process can be used more generally to understand how systems of interacting particles mix; in particular we establish a connection between the maximal expected meeting time of two independent random walks and the mixing time of the beta-binomial splitting process. Despite giving the same name to this auxiliary process, our version of the chameleon process is substantially different from those used previously; in particular it is engineered to deal with multiple particles occupying a single vertex (an event which cannot happen in the exclusion process).

As is typical with proofs that use a chameleon process, the results we obtain are not optimal in the sense that the multiplicative constants appearing in the statements are not optimized. On the other hand, the strength of this approach is in allowing us to prove results for arbitrary graphs with arbitrary edge weights.

\subsection{Model and main result}
The $m$-particle symmetric beta-binomial splitting process with parameter $s>0$ on a finite connected graph $G=(V,E,(r_e)_{e\in E})$ (with vertex set $V$, edge set $E$ and $(r_e)_{e\in E}$ a collection of positive edge weights) is the continuous-time Markov process $(\xi_t)_{t\ge0}$ on state space
\[
\Omega_{G,m}:=\left\{\xi\in\mathds{N}_0^V:\,\sum_{v\in V}\xi(v)=m\right\},
\]
with infinitesimal generator
\[
\mathcal{L}^{\mathrm{BB}(G,s,m)}f=\sum_{\{v,w\}\in E}\frac{r_{\{v,w\}}}{\sum_{e\in E}r_e}\left(\mathcal{P}_{\{v,w\}}^{\mathrm{BB}(G,s,m)}-\mathds{1}\right)f,\qquad f:\Omega_{G,m}\to\mathds{R},
\]
where, for $\xi\in\Omega_{G,m}$, $\mathcal{P}_{\{v,w\}}^{\mathrm{BB}(G,s,m)}f(\xi):=\mathds{E}[f(\xi'_{\{v,w\}})]$, and $\xi'_{\{v,w\}}$ is the random variable defined as
\[
\xi'_{\{v,w\}}(u):=\begin{cases}
X&\mbox{ if }u=v\\
\xi(v)+\xi(w)-X&\mbox{ if }u=w\\
\xi(u)&\mbox{ otherwise,}
\end{cases}
\]
with $X\sim\mbox{BetaBin}(\xi(u)+\xi(v),s,s)$.

We recover the uniform reshuffling model by setting $s=1$. We remark that in the binomial splitting process of~\cite{QS}, the random variable $X$ is chosen instead according to a binomial distribution (recall we obtain a binomial with probability parameter 1/2 by sending $s\to\infty$ in the above beta-binomial).

The symmetric \textbf{b}eta-\textbf{b}inomial \textbf{s}plitting \textbf{p}rocess (BBSP) on a connected graph with positive edge weights is irreducible on $\Omega_{G,m}$ and, by checking detailed balance, one can determine that the $m$-particle BBSP on $G$ with parameter $s$ (denoted BB$(G,s,m)$) has unique equilibrium distribution
\begin{align}\label{e:eqmbb}
\pi^{\mathrm{BB}(G,s,m)}(\xi)\propto \prod_{v\in V}\frac{\Gamma(s+\xi(v))}{\xi(v)!},\qquad\xi\in\Omega_{G,m}.
\end{align}

Recall that the total variation distance between two probability measures $\mu$ and $\nu$ defined on the same finite set $\Omega$ is
\[\|\mu-\nu\|_\mathrm{TV}:=\sum_{\omega\in\Omega}(\mu(\omega)-\nu(\omega))_+,\]
where for $x\in\mathds{R}$, $x_+:=\max\{x,0\}$. For any irreducible Markov process $(\xi_t)_{t\ge0}$ with state space $\Omega$, and equilibrium distribution $\pi$, the $\varepsilon$-total variation mixing time is \[t_\text{mix}(\varepsilon):=\inf\big\{t\ge0:\,\max_{\xi_0\in\Omega}\|\mathcal{L}[\xi_t]-\pi\|_\text{TV}\le \varepsilon\big\}\] for any $\varepsilon\in(0,1)$.

We write $t_\mathrm{mix}^{\mathrm{BB}(G,s,m)}(\varepsilon)$ for the $\varepsilon$-total variation mixing time of BB$(G,s,m)$. For $i$ and $j$ distinct vertices of $G$, we also write $\hat M_{i,j}(G)$ for the meeting time of two independent random walks started from vertices $i$ and $j$, each moving as $\mathrm{BB}(G,s,1)$, that is, the time that the two walks are on neighbouring vertices and the edge between them rings for one of the walks. Recalling that BetaBin($1,s,s)\sim\mathrm{Bernoulli}(1/2)$, we see that $\hat M_{i,j}(G)$ does not depend on $s$ and is just the meeting time of two independent random walks on the graph obtained from $G$ by halving the edge weights.

We assume throughout that $V=\{1,\ldots,n\}$. Our main result is as follows.

\begin{theorem}[Symmetric beta-binomial splitting process mixing time bound]\label{T:betabin}
Fix $s\in\mathds{Q}$ positive.
 There exists a constant $C(s)>0$ such that for any size $n$ connected graph $G$ with positive edge weights, and any integer $m\ge2$, 
\[
\forall\,\varepsilon\in(0,1/4),\qquad t_\mathrm{mix}^{\mathrm{BB}(G,s,m)}(\varepsilon)\le C(s)\log\left(\frac{n+m}{\varepsilon}\right)\max_{i,j}\mathds{E}\hat M_{i,j}(G).
\]
\end{theorem}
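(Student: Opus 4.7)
The plan is to adapt the chameleon-process technique of~\cite{morris} to the BBSP setting, with modifications that accommodate multiple particles per vertex. The argument has three components: (i) a chameleon-type coupling between two BBSP copies, (ii) exponential decay of the number of discrepancies at rate $\Omega(1/\max_{i,j}\E\hat M_{i,j}(G))$, and (iii) a standard amplification step to obtain the $\log((n+m)/\varepsilon)$ factor.

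The chameleon coupling I would build runs two copies of BBSP in parallel, assigning each particle one of three colours: black (present in both copies), red (excess in copy~1 at its vertex), or white (excess in copy~2). When an edge $\{v,w\}$ rings, I would sample $p\sim\mathrm{Beta}(s,s)$ and independently route each particle at either endpoint to $v$ with probability $p$ and to $w$ with probability $1-p$. Since $\mathrm{Binomial}(n,p)$ integrated over $p\sim\mathrm{Beta}(s,s)$ equals $\mathrm{BetaBin}(n,s,s)$, this reproduces the correct $\mathrm{BB}(G,s,m)$ marginals in both copies simultaneously. Whenever a red and a white particle co-occupy a vertex after the routing, they are paired and both recoloured black, corresponding to a resolved discrepancy.

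The key technical step is showing that the total number $\Phi_t$ of red particles decays exponentially at rate proportional to $1/\max_{i,j}\E\hat M_{i,j}(G)$. Two observations drive this: first, each coloured particle marginally performs a continuous-time random walk on $G$ with edge weights $r_e/2$, since $\E p = 1/2$ means it stays or moves with equal probability whenever its edge rings---this is exactly the walk defining $\hat M$; second, once a red and white are at the same vertex, or on adjacent vertices connected by an edge that rings, they coalesce with probability at least $\E[p^2+(1-p)^2]=(s+1)/(2s+1)>0$. Together these should reduce the decay of $\Phi_t$ to the meeting time of two independent random walks, giving $\Prob(\Phi_t\ge 1)\le \Phi_0\,e^{-ct/\max_{i,j}\E\hat M_{i,j}(G)}$. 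Since initial discrepancies satisfy $\Phi_0\le m$, taking $t=\Theta(\max_{i,j}\E\hat M_{i,j}(G)\log((n+m)/\varepsilon))$ and applying the coupling bound on total variation distance then yields the theorem.

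The main obstacle is rigorously handling the correlations in the chameleon dynamics: the marginal random walks describing distinct coloured particles are not independent, since they share the value of $p$ on every edge ring, and several coloured particles can cluster at a single vertex. To reduce to true independent-walk meeting times one likely needs either a monotone-coupling domination of the coupled walks by independent ones, or a Lyapunov-function argument that absorbs a factor $C(s)$ while working with a weighted potential. The rationality of $s$ should be exploited here: for $s=p/q$, the $\mathrm{Beta}(s,s)$ distribution admits a clean finite representation (for instance, via a Polya urn on integer-sized colour pools), which should give the explicit lower bound on the annihilation probability needed to pin down $C(s)$.
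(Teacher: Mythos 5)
Your proposal takes a genuinely different route from the paper: you couple two copies of the BBSP directly, colouring shared particles black and excess particles red/white, and wait for red--white annihilation; the paper instead first reduces, via the triangle inequality, to two configurations differing by the location of a \emph{single} particle, lifts the process to a marked version (the MaBB), and then compares the marked particle's conditional law to its stationary conditional law $\pi_\xi(v)\propto a\xi(v)+b$ using an ink-valued chameleon process with pinkening and depinking rounds. Your routing representation (sample $p\sim\mathrm{Beta}(s,s)$, route each particle independently) is sound and does reproduce the correct marginal in each copy; indeed, conditionally on the trajectory of the shared black particles, your tagged excess particle moves with exactly the transition probabilities $\mathrm{P}_{e,\xi,\xi'}(\cdot,\cdot)$ of the paper's marked particle, so your lift is implicitly the MaBB.

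The genuine gap is at the central step, and you have correctly located it yourself without resolving it: the claimed decay $\Prob(\Phi_t\ge1)\le\Phi_0 e^{-ct/\max_{i,j}\E\hat M_{i,j}(G)}$ is not established when there are up to $m$ red and $m$ white particles. Distinct coloured particles are not independent (they share $p$ on every common edge ring), arbitrarily many can pile onto one vertex, and the identity of the white particle that a given red must meet changes as other annihilations occur; none of the proposed fixes (monotone domination, a Lyapunov function, a Polya-urn representation of $\mathrm{Beta}(s,s)$) is developed far enough to see that it would close this. The missing idea that would collapse the difficulty is the paper's very first step: telescoping over a chain of \emph{adjacent} configurations $\zeta=\zeta^0\sim\cdots\sim\zeta^r=\zeta'$ with $r\le m$, so that each summand involves exactly one red and one white particle, which (before they share a ringing edge) do move as two independent rate-halved walks, making $\max_{i,j}\E\hat M_{i,j}(G)$ directly relevant. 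You never perform this reduction, so the multi-particle correlation problem is fatal to the argument as written. Note also that the paper's use of the rationality of $s$ is quite different from yours: it is needed to define the integer colour function $\chi(k)=ak+b$ so that the conditional law of the marked particle can be represented by whole numbers of red/pink/white particles per vertex, whereas in your scheme the annihilation probability $\E[p^2+(1-p)^2]=(s+1)/(2s+1)$ is already explicit for all $s>0$ and rationality plays no essential role.
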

Our methodology does not allow us to immediately deduce results in the case of $s$ irrational. 
\begin{remark}\label{R:C}
 For $s=b/a$ with $a$ and $b$ coprime, the constant $C(s)$ can be taken to be $C'a(p^*)^{-2}\log(12a(p^*)^{-2}) \log(a+b)$, for some universal constant $C'>0$, where $p^*=(5/12)^{2s}/(6B(s,s))$ for $s<20$, and $p^*=\frac16(1-\frac{20}{s+1})$ for $s\ge20$, with $B(\cdot,\cdot)$ the beta function.
Observe that $p^*\to\frac16$ as $s\to\infty$, whereas $p^*\to0$ as $s\to0$. The quantity $1/s$ can be seen as measuring the strength that particles tend to ``clump together'', with the strength increasing as $1/s\to\infty$. Thus it is not surprising to obtain an upper-bound which increases as $s\to0$, as breaking apart clumps of particles takes longer.
\end{remark}

We recall (see, for example, \citet[Corollary 14.7]{aldousfill}) that  $\max_{i,j}\mathds{E}\hat M_{i,j}(G)\lesssim \tau_0 $, where $\tau_0$ is the average hitting time, defined as $\tau_0=\sum_{i,j\in V}\pi_i\pi_j\mathds{E}_iT_j$ with $\pi_i$  the equilibrium distribution at vertex $i$ of a simple random walk and $\mathds{E}_i T_j$  the expected hitting time of vertex $j$ by a simple random walk started from vertex $i$. \citet[Section 5.2]{aldousfill} provides a table with orders of magnitude of $\tau_0$ for certain graphs when $r_e\equiv 1$ (note that for regular graphs we must multiply the values displayed there by $|E|$ to fit within our framework). For instance, on the cycle or line, $\tau_0=\Theta(n^3)$. 

To complement Theorem~\ref{T:betabin}, we demonstrate a lower bound on the mixing time for the line $L_n:=[1,n]\cap\mathds{N}$, which is of the same order (in $n$) for fixed $s$ and $\varepsilon$, when $\log m=\Theta(\log n)$.

\begin{proposition}[Mixing time lower bound for $L_n$]\label{P:linelower}
Let $L_n$ denote the line graph on $n$ vertices with edge weights of 1, i.e.\! $r_e\equiv 1$. For any $\varepsilon\in(0,1)$, there exists a constant $C_\varepsilon>0$ such that for all integers $n,m\ge2$ and $s>0$,
\[
t_\mathrm{mix}^{\mathrm{BB}(L_n,s,m)}(\varepsilon)\ge \frac{n^3}{\pi^2}\left(\log n-\log \left(1+\frac{n}{m}+\frac1{s}\right)-C_\varepsilon\right).
\]
\end{proposition}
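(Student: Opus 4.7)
The plan is to apply a Wilson-type distinguishing-statistic lower bound using the first non-trivial Neumann eigenfunction of the discrete Laplacian on $L_n$. Set $\phi(i) := \cos\bigl((i-\tfrac12)\pi/n\bigr)$ for $i = 1,\ldots,n$ and
\[F(\xi) := \sum_{i=1}^n \phi(i)\xi(i).\]
Because $\E[X] = (\xi(v)+\xi(w))/2$ for $X \sim \mathrm{BetaBin}(\xi(v)+\xi(w), s, s)$, the expected change of $F$ under a firing of edge $\{i,i+1\}$ is $-\tfrac12(\phi(i+1)-\phi(i))(\xi(i+1)-\xi(i))$; a summation by parts with the Neumann extension $\phi(0)=\phi(1),\phi(n+1)=\phi(n)$ then yields $\mathcal L^{\mathrm{BB}(L_n,s,m)} F = -\lambda F$ with $\lambda := 2\sin^2(\pi/(2n))/(n-1)$. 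In particular $1/(2\lambda) = (n-1)/(4\sin^2(\pi/(2n))) = (n^3/\pi^2)(1-O(1/n))$, which is the source of the $n^3/\pi^2$ prefactor in the statement.

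For the initial configuration, I would take $\xi_0$ with all $m$ particles at vertex $1$, so that $F(\xi_0) = m\cos(\pi/(2n))$. The equilibrium measure \eqref{e:eqmbb} is immediately recognisable as the Dirichlet--multinomial distribution $\mathrm{DirMult}(m;s,\ldots,s)$, and its standard covariance formulas combined with the geometric-series identities $\sum_i\phi(i)=0$ and $\sum_i\phi(i)^2 = n/2$ give
\[V := \Var_\pi(F) = \frac{m(ns+m)}{2(ns+1)}.\]
A short algebraic manipulation shows $F(\xi_0)^2/V \ge 2n\cos^2(\pi/(2n))/(1 + n/m + 1/s)$, and hence
\[\log\bigl(F(\xi_0)^2/V\bigr) \ge \log n - \log(1 + n/m + 1/s) + O(1),\]
which is exactly the log factor in the target bound.

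To complete the proof I would invoke the standard Chebyshev two-moment argument. Writing $\mu_t := e^{-\lambda t}F(\xi_0)$ and $\sigma_t^2 := \Var_{\xi_0}(F(\xi_t))$, applying Chebyshev on both sides to the event $\{F \ge \mu_t/2\}$ gives
\[\|\mathcal L[\xi_t] - \pi\|_{\mathrm{TV}} \ge 1 - \frac{4(\sigma_t^2+V)}{\mu_t^2},\]
which rearranges, once a uniform bound $\sigma_t^2 \le cV$ is in hand, to $t_{\mathrm{mix}}(\varepsilon) \ge \frac{1}{2\lambda}\log\bigl(F(\xi_0)^2/(K_\varepsilon V)\bigr)$ for some $K_\varepsilon$; combined with the ingredients above and with the $O(1)$/lower-order corrections absorbed into $C_\varepsilon$, this is the stated lower bound. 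I expect the main technical obstacle to be the uniform variance bound $\sigma_t^2 \le cV$. Using the identity $\tfrac{d}{dt}\sigma_t^2 = -2\lambda\sigma_t^2 + \E_{\xi_0}[\Gamma(F,F)(\xi_t)]$ together with $\E_\pi[\Gamma(F,F)] = 2\lambda V$, the task reduces to controlling $\E_{\xi_0}[\Gamma(F,F)(\xi_t)]$ uniformly in $t$ by an absolute multiple of $\E_\pi[\Gamma(F,F)]$; since $\Gamma(F,F)$ is a quadratic functional of $\xi$, I would attack this through the beta--Bernoulli representation of the BB update (at each firing of $\{v,w\}$, draw $p \sim \mathrm{Beta}(s,s)$ and toss each of the $\xi(v)+\xi(w)$ particles on that edge independently to $v$ with probability $p$), under which each tagged particle marginally performs a simple continuous-time random walk on $L_n$ with rate $1/(2(n-1))$, so that the bound reduces to one- and two-particle kernel estimates amenable to spectral control.
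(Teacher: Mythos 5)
Your skeleton is the same Wilson-type argument the paper uses, and the parts you have written out are correct: your $F$ is (up to summation by parts and a constant) the paper's eigenfunction $f$ from Lemma~\ref{L:eigen}, your eigenvalue $2\sin^2(\pi/(2n))/(n-1)$ equals $-\frac{1}{n-1}(\cos(\pi/n)-1)$, the Dirichlet--multinomial computation of $\Var_\pi(F)$ is right, and the two-sided Chebyshev step is exactly how the paper concludes. The gap is the one you flag yourself: the uniform-in-$t$ bound $\sigma_t^2\le cV$ is the entire technical content of the proof, and your plan for it is not in a state where it can be checked to close. Worse, your choice of initial condition $\xi_0=m\delta_1$ actively works against you here. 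The quantity you must control is $\mathds{E}_{\xi_0}[\Gamma(F,F)(\xi_t)]$, which contains second moments $\mathds{E}[\xi_t(i)^2]$, i.e.\ both the squared mean profile $(\mathds{E}\xi_t(i))^2$ and the two-particle coincidence probabilities $\mathds{P}(X^1_t=X^2_t)$ for tagged particles. Starting from a fully clumped configuration, the first contribution is of order $m^2$ near vertex $1$ at small times, and the second starts at $1$ and decays only after the pair separates (which for small $s$ is slow, since two particles on a common ringing edge stay together with probability $\tfrac{s+1}{2s+1}$). Making the time-integrated contribution of these transients comparable to $V$ requires genuine local-CLT/occupation-time estimates for the one- and two-particle dynamics, with the correct $1/s$ dependence; none of this is supplied, and it is not clear the bound $\sigma_t^2\le cV$ even holds for this start in all regimes of $(m,n,s)$ (e.g.\ $m\gg n$, $s$ large, where $V\asymp m/2$ but the transient drift term threatens to contribute $\asymp m^2/n$).

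The paper avoids all of this with one idea you are missing: it does not start from a deterministic extremal configuration. It takes $\xi_0(1),\dots,\xi_0(\lfloor n/2\rfloor)$ i.i.d.\ $\mathrm{NegBin}\bigl(s,\tfrac{\lfloor n/2\rfloor s}{m+\lfloor n/2\rfloor s}\bigr)$ conditioned on summing to $m$ (still giving $\mathds{E}f(\xi_0)\ge nm/20$, a constant fraction of the maximum), and proves a monotone coupling (Lemma~\ref{L:mon}) for the unrestricted process $\mathrm{BB}(L_n,s)$ whose reversible measures are product Negative Binomials. The initial condition is then stochastically dominated by a \emph{stationary} product configuration conditioned on an event of probability $\ge 1/3$, so $\mathds{E}[\xi_u(k)^2]$ is bounded for all $u$ and $k$ by three times the stationary second moment $(m/\lfloor n/2\rfloor)^2(1+\lfloor n/2\rfloor/m+1/s)$; the crude estimate that a ring of $\{k,k+1\}$ changes $f$ by at most $\xi(k)+\xi(k+1)$ then finishes the variance bound with no kernel estimates at all. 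If you want to salvage your version, either adopt that spread-out dominated initial condition (after which your carré-du-champ identity closes immediately), or supply the tagged-particle heat-kernel and coincidence-probability estimates in full --- but as written the proposal does not constitute a proof.
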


\subsection{Related work}
The beta-binomial splitting process is closely related to the binomial splitting process (although our methods do not obviously extend to this model). In \cite{QS}, the authors show that the binomial splitting process (as well as a more general version in which vertices have weights) exhibits total variation cutoff (abrupt convergence to equilibrium) at time $\frac12t_\mathrm{rel}\log m$ (with $t_\mathrm{rel}$ the relaxation time) for graphs satisfying a finite-dimensional geometry assumption provided the number of particles $m$ is at most order $n^2$ (they also obtain a pre-cutoff result without this restriction on particle numbers). For instance on the cycle their results show that the binomial splitting process mixes at time $\Theta(n^2\log m)$ for $m\le n^2$. On the other hand, for the beta-binomial splitting process on the cycle, our results give an upper bound of $\bigO(n^2\log(n+m))$ (with the implicit constant depending on the parameter $s$).
The beta-binomial splitting process has, in a certain sense, more dependency between the movement of the particles compared with the binomial splitting process, which in turn means any analysis on the mixing time is more involved. To see this, consider that in the binomial splitting process, when an edge rings each particle on the edge decides which vertex to jump to independently of the other particles; this independence is not present in the beta-binomial splitting process.

There has been a flurry of activity in recent years analysing mixing times of continuous mass (rather than discrete particles) redistribution processes \citep{10.1214/20-AOP1473,caputo2022spectral,pillai2018mixing,smith2013analysis}. The uniform reshuffling model (when run on the complete graph) is the discrete-space version of a Gibbs sampler on the $n$-simplex, the mixing time of which is analysed in~\citet[Example 13.3]{aldousfill} and~\cite{ASmith1}.   In~\cite{aldousfill}, the total variation mixing time of the Gibbs sampler is shown to be $\bigO(n^2\log n)$; the argument can be  used (as noted by~\cite{ASmith1}) to obtain a mixing time of $\bigO(n^2\log n)$ of the uniform reshuffling model on the complete graph (in which edge weights are all $1/(n-1)$), provided the number of particles $m$ is at least $n^{5.5}$. The arguments in~\cite{ASmith1} improve this result when $m>n^{18.5}$, obtaining $\bigO(n\log n)$ as the mixing time of the uniform reshuffling model on the complete graph in this regime. Our results improve the best known bound on the mixing time of the uniform reshuffling model on the complete graph to $\bigO(n^2\log n)$ for $m\le n^{5.5}$.

More generally, the symmetric beta-binomial splitting process is a discrete-space version of a Gibbs sampler on the $n$-simplex, in which mass is redistributed across the vertices of a ringing edge according to a symmetric beta random variable. In~\cite{10.1214/20-AOP1428}, cutoff is demonstrated at time $\frac{1}{\pi^2} n^2\log n$ for this model on the line, provided the beta parameter (which we denote by $s$ here) is at least 1. While our upper-bound for the discrete-space model holds also for some $s\in(0,1)$, we are restricted to $s\in\mathds{Q}$ by the nature of our analysis. The proof of our lower bound (Proposition~\ref{P:linelower}) for the line follows closely the analogous argument in~\cite{10.1214/20-AOP1428}. 

The beta-binomial splitting process is also related to the KMP model \citep{kipnis1982heat}  of energy transport along a chain of oscillators, and its generalisation \citep{carinci2013duality}. As described in \citet{frassek2020duality}, the dual of the generalised KMP process is identical to the beta-binomial splitting process except for the existence of additional vertices which are absorbing for the particles.

A continuous-space version of the binomial splitting process is the averaging process (also known as the repeated average model), introduced by \citet{aldous2011finite,aldouslanoue}. In this model, when an interaction occurs between two vertices, their mass is redistributed equally between them. Mixing times for this process have been studied with total variation cutoff demonstrated on the complete graph \citep{chatterjee2022phase}, and on the hypercube and complete bipartite graphs \citep{caputo2022cutoff}. A general lower bound for the mixing time of the averaging process on any connected graph is obtained by \cite{movassagh2022repeated}.

Lastly, a model similar in flavour to the beta-binomal splitting process and which also has applications in econophysics is the immediate exchange process proposed in~\cite{heinsalu2014kinetic} and its generalisation~\citep{van2016duality}. In the discrete version of the generalised immediate exchange process, when an edge updates, each vertex on the edge gives to the other vertex a random number of its particles, chosen according to a beta-binomial distribution. Again, however, our methods do not obviously extend to this model (for our methodology it is important that updates are distributionally symmetric over the vertices on a ringing edge), and obtaining bounds on the mixing time of this process appears to be an open problem.

\subsection{Road map}
To motivate the use of several auxiliary processes, we present a road map for the upper bound argument, highlighting key propositions that combine to prove Theorem~\ref{T:betabin}.

Firstly, in order to bound the total variation (TV) distance between the time-$t$ states of two BB$(G,s,m)$ processes started from arbitrary configurations, we use the triangle inequality to reduce the problem to bounding the TV distance between the time-$t$ states of two BB$(G,s,m)$ configurations which start from \emph{adjacent} configurations, that is, configurations which differ by the action of moving a single particle (from any vertex to any other). 

Formally, we say that two BB$(G,s,m)$ configurations  $\zeta^1$ and $\zeta^2$ are adjacent and write $\zeta^1\sim\zeta^2$ if there exist vertices $v$ and $w$ such that for all $y\notin\{v,w\}$, $\zeta^1(y)=\zeta^2(y)$ and $|\zeta^1(v)-\zeta^2(v)|=|\zeta^1(w)-\zeta^2(w)|=1$, i.e.\! by moving just a single particle we can obtain $\zeta^2$ from $\zeta^1$. 

\begin{proposition}
Let $\zeta_t$ and $\zeta'_t$ be two realisations of BB$(G,s,m)$ initialised at $\zeta$ and $\zeta'$ respectively. There exists a sequence of configurations $\zeta^0\sim\zeta^1\sim\cdots\sim\zeta^r$ with $r\le m$ such that
\begin{align}\label{e:1}
\|\mathcal{L}(\zeta_t)-\mathcal{L}(\zeta'_t)\|_{\mathrm{TV}}\le \sum_{i=1}^r\|\mathcal{L}(\zeta^{i-1}_t)-\mathcal{L}(\zeta^i_t)\|_{\mathrm{TV}},
\end{align}
where for each $0\le i\le r$, $\zeta^i_t$ is a realisation of BB$(G,s,m)$ started from configuration $\zeta^i$.
\end{proposition}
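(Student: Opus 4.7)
The proposition splits naturally into two independent pieces: (i) constructing a sequence of configurations $\zeta = \zeta^0 \sim \zeta^1 \sim \cdots \sim \zeta^r = \zeta'$ of length $r \le m$ connecting any two elements of $\Omega_{G,m}$ via single-particle moves, and (ii) deducing the total variation bound \eqref{e:1}. My plan is to handle (i) by an explicit greedy construction and (ii) by appealing to the fact that total variation distance is a metric.

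For (i), I would define the \emph{excess set} $A := \{v \in V : \zeta(v) > \zeta'(v)\}$ and the \emph{deficit set} $B := \{v \in V : \zeta(v) < \zeta'(v)\}$. Because $\sum_v \zeta(v) = \sum_v \zeta'(v) = m$, the total excess equals the total deficit:
\[
d := \sum_{v \in A}\bigl(\zeta(v) - \zeta'(v)\bigr) = \sum_{w \in B}\bigl(\zeta'(w) - \zeta(w)\bigr) = \tfrac{1}{2}\sum_{v \in V}|\zeta(v) - \zeta'(v)|.
\]
Set $\zeta^0 := \zeta$, and inductively, given $\zeta^{i-1} \ne \zeta'$, pick any $v_i$ with $\zeta^{i-1}(v_i) > \zeta'(v_i)$ and any $w_i$ with $\zeta^{i-1}(w_i) < \zeta'(w_i)$ (such vertices exist since conservation of mass forces both sets to be nonempty simultaneously). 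Define $\zeta^i$ by $\zeta^i(v_i) = \zeta^{i-1}(v_i) - 1$, $\zeta^i(w_i) = \zeta^{i-1}(w_i) + 1$, and $\zeta^i(y) = \zeta^{i-1}(y)$ otherwise. By construction $\zeta^{i-1}\sim\zeta^i$, and the quantity $\tfrac{1}{2}\sum_v|\zeta^i(v) - \zeta'(v)|$ drops by exactly $1$ at each step, so this terminates at $\zeta^r = \zeta'$ after exactly $r = d$ steps. Finally, $d = \sum_{v\in A}(\zeta(v) - \zeta'(v)) \le \sum_{v\in A}\zeta(v) \le \sum_v \zeta(v) = m$, giving $r \le m$.

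For (ii), the bound \eqref{e:1} is just the triangle inequality for the total variation metric on the space of probability measures on $\Omega_{G,m}$. Since $\zeta^i_t$ denotes the law at time $t$ of BB$(G,s,m)$ started from $\zeta^i$, and since $\zeta^0 = \zeta$, $\zeta^r = \zeta'$, we immediately get
\[
\|\mathcal{L}(\zeta_t) - \mathcal{L}(\zeta'_t)\|_{\mathrm{TV}} = \|\mathcal{L}(\zeta^0_t) - \mathcal{L}(\zeta^r_t)\|_{\mathrm{TV}} \le \sum_{i=1}^r \|\mathcal{L}(\zeta^{i-1}_t) - \mathcal{L}(\zeta^i_t)\|_{\mathrm{TV}}.
\]

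There is no real obstacle here; the only small point requiring care is to ensure the path length is bounded by $m$ (and not, say, $2m$), which is achieved by decrementing the $\ell^1$ discrepancy $\tfrac{1}{2}\sum_v|\zeta^{i-1}(v) - \zeta'(v)|$ by exactly $1$ at each move, rather than accidentally doing moves that do not strictly reduce it. The adjacency notion defined in the excerpt only requires $v_i$ and $w_i$ to be any two vertices (not graph neighbours), which is what makes such a direct greedy construction possible.
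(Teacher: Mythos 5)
Your proposal is correct and takes essentially the same route as the paper: the paper simply asserts the existence of a chain $\zeta=\zeta^0\sim\cdots\sim\zeta^r=\zeta'$ and invokes the triangle inequality for total variation, leaving the construction and the bound $r\le m$ implicit. Your greedy construction and the observation that $r=\tfrac12\sum_v|\zeta(v)-\zeta'(v)|\le m$ just make explicit the details the paper omits.
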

\begin{proof}
We choose the sequence of BB$(G,s,m)$ configurations $\{\zeta^i\}_{i=0}^r$ to satisfy
\[
\zeta=\zeta^0\sim\zeta^1\sim\cdots\sim\zeta^r=\zeta'.
\]
The result then follows by the triangle inequality for total variation.
\end{proof}
To bound the right-hand side of \eqref{e:1}, we introduce a new process which is similar to a BB$(G,s,m)$ process but has one particle marked to distinguish it from the others. 

\begin{proposition}\label{P:mabbexists}
There exists a continuous-time Markov process $(\xi_t,y_t)_{t\ge0}$ with state space $\Omega'_{G,m}:=\Omega_{G,m-1}\times V$  with the property that if we remove the marking so that all particles are identical, the process becomes BB$(G,s,m)$, that is, $(\xi_t+\delta_{y_t})_{t\ge0}$ is a realisation of BB$(G,s,m)$, where $\delta_v$ is a unit vector with value 1 in co-ordinate $v\in V$.
\end{proposition}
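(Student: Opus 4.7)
The plan is to construct $(\xi_t,y_t)$ explicitly via its jump dynamics, coupling it with BB$(G,s,m)$ through a common Poisson clock on each edge. Each edge $\{v,w\}\in E$ will ring at rate $r_{\{v,w\}}/\sum_{e\in E}r_e$, exactly as in BB$(G,s,m)$. On each ringing I will perform a two-stage update: first sample $X\sim\mathrm{BetaBin}(m',s,s)$, where
\[
m' := \xi_{t-}(v)+\xi_{t-}(w)+\ind\{y_{t-}\in\{v,w\}\}
\]
is the total number of particles (marked and unmarked combined) on $\{v,w\}$ just before the ringing; second, decide which of the resulting $m'$ particles on $\{v,w\}$ carries the mark.

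Concretely, if $y_{t-}\notin\{v,w\}$ the mark is not involved: set $y_t=y_{t-}$, $\xi_t(v)=X$, $\xi_t(w)=m'-X$, and $\xi_t(u)=\xi_{t-}(u)$ for $u\notin\{v,w\}$. If $y_{t-}\in\{v,w\}$, say $y_{t-}=v$, then, conditionally on $X$ and independently of everything else, I place the mark uniformly at random among the $m'$ particles now on $\{v,w\}$: with probability $X/m'$ set $y_t=v$, $\xi_t(v)=X-1$, $\xi_t(w)=m'-X$; otherwise set $y_t=w$, $\xi_t(v)=X$, $\xi_t(w)=m'-X-1$. This yields a continuous-time Markov process on $\Omega'_{G,m}=\Omega_{G,m-1}\times V$, and the number of unmarked particles is conserved, so $\xi_t\in\Omega_{G,m-1}$ as required.

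To verify that $(\xi_t+\delta_{y_t})_{t\ge 0}$ is a realisation of BB$(G,s,m)$, I will compare the two jump dynamics edge by edge. The edge-firing rates agree by construction, and conditional on $\{v,w\}$ firing the pair $(\xi_t(v)+\delta_{y_t}(v),\xi_t(w)+\delta_{y_t}(w))$ equals $(X,m'-X)$ with $X\sim\mathrm{BetaBin}(m',s,s)$, which is precisely the BB$(G,s,m)$ update on $\{v,w\}$ applied to $(\xi_{t-}+\delta_{y_{t-}})$ (whose particle total on $\{v,w\}$ is $m'$). The placement of the mark is an internal degree of freedom that disappears under the projection $(\xi,y)\mapsto \xi+\delta_y$, giving the claimed identity in law.

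There is no substantial analytic obstacle here: existence reduces to specifying valid jump rates on a finite state space. The only genuine design choice is how to relocate the mark when $y_{t-}\in\{v,w\}$, and the uniform-over-output-particles rule I propose both makes the marginal identity automatic (since the conditional distribution of $y_t$ given $X$ does not affect $\xi_t+\delta_{y_t}$) and reflects the symmetry between marked and unmarked particles. I expect this symmetry to be exactly what makes the marked process useful as the building block of the chameleon argument that drives the proof of Theorem~\ref{T:betabin}.
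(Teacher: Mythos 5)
Your construction is correct, and your verification of the projection identity is sound: conditioning on the edge that rings, the pair of totals (marked plus unmarked) on the two endpoints is exactly a $\mathrm{BetaBin}$ split of the projected configuration, and the placement of the mark is a refinement that the projection $(\xi,y)\mapsto\xi+\delta_y$ forgets. In fact your process is distributionally identical to the paper's MaBB: the paper records precisely your description as an ``alternative description'' in~\eqref{e:MaBB2BBSP} (update the $m'$ particles on the ringing edge as one BBSP step, then mark a uniformly chosen particle on the edge, so the mark lands on $y$ with probability $(\xi'(y)+1)/(\xi(v)+\xi(w)+1)$). The difference is in which description is taken as primary. The paper instead defines the update by first moving the \emph{unmarked} particles according to $\mathrm{BB}(G,s,m-1)$ and then re-inserting the mark with the conditional law $\mathrm{P}_{e,\xi,\xi'}(v,\cdot)$, a ratio of heat kernels whose normalisation rests on Property~\ref{a:MaBBindep}; it then verifies the projection identity~\eqref{e:contraction} by a short computation. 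Your route makes the projection property essentially automatic, which is cleaner for Proposition~\ref{P:mabbexists} as literally stated. What the paper's primary description buys — and what your description does not make visible — is that the law of the unmarked particles' motion does not depend on the location of the mark. That autonomy is not part of this proposition, but it is essential later (for the graphical construction in Section~\ref{S:graphical}, where the $U^b_r$ drive the black particles independently of the mark, and hence for the whole chameleon argument); establishing it from your description would require exactly the identity of Property~\ref{a:MaBBindep} that the paper's definition builds in from the start. So: correct proof, same process, different and somewhat more economical verification, at the cost of deferring a property the paper needs downstream.
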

We shall explicitly construct a process called a MaBB (marked beta-binomial splitting) process which satisfies the requirements in Proposition~\ref{P:mabbexists} in Section~\ref{S:MaBB}.

To bound $\|\mathcal{L}(\zeta^{i-1}_t)-\mathcal{L}(\zeta^i_t)\|_{\mathrm{TV}}$, suppose that $\zeta^{i-1}$ and $\zeta^i$ differ on vertices $v$ and $w$ with $\zeta^{i-1}(v)-\zeta^i(v)=1$. Define a BB$(G,s,m-1)$ configuration $\xi$ to be $\xi(y):=\zeta^{i-1}(y)-\delta_v(y)=\zeta^i(y)-\delta_w(y)$ for all $y\in V$. As BB$(G,s,m)$ is a projection of MaBB (Proposition~\ref{P:mabbexists}), we have by the triangle inequality
\begin{align}\label{e:2}\|\mathcal{L}(\zeta^{i-1}_t)-\mathcal{L}(\zeta^i_t)\|_{\mathrm{TV}}\le \|\mathcal{L}((\xi_t,m_t))-\mathcal{L}((\xi'_t,m'_t))\|_{\mathrm{TV}},\end{align}where $(\xi_t,m_t)_{t\ge0}$ is a realisation of MaBB initialised at $(\xi,v)$ and $(\xi'_t,m'_t)_{t\ge0}$ is a realisation of MaBB initialised at $(\xi,w)$.  Next, define $\tilde m_t$ to be a random variable which, given $\xi_t$, has law $\pi_{\xi_t}$, and similarly $\tilde m'_t$ to have law $\pi_{\xi'_t}$ given $\xi'_t$. Since $\mathcal{L}((\xi_t,\tilde m_t))=\mathcal{L}((\xi'_t,\tilde m'_t))$, we use the triangle inequality again to deduce
\begin{align}\label{e:3}
\|\mathcal{L}((\xi_t,m_t))-\mathcal{L}((\xi'_t,m'_t))\|_{\mathrm{TV}}\le \|\mathcal{L}((\xi_t,m_t))-\mathcal{L}((\xi_t,\tilde m_t))\|_{\mathrm{TV}}+\|\mathcal{L}((\xi'_t,m'_t))-\mathcal{L}((\xi'_t,\tilde m'_t))\|_{\mathrm{TV}}.
\end{align}

The last stage is proving the following proposition, the proof of which uses a chameleon process adapted to this setting.
\begin{proposition}\label{P:cham}
Let $(\xi_t,m_t)$ denote the time-$t$ configuration of a MaBB initialised at $(\xi,v)\in\Omega'_{G,m}$. There exist positive constants $K_0$ and $c=c(s)$ such that for any $t>0$,
\begin{align}\label{e:5}
\|\mathcal{L}((\xi_t,m_t))-\mathcal{L}((\xi_t,\tilde m_t))\|_\mathrm{TV}\le K_0  e^{-ct/\max_{i,j}\mathds{E}\hat M_{i,j}(G)}\sqrt{a(m-1)+bn}.
\end{align}
Moreover, we can take $c(s)=(4K\log(12a(p^*)^{-2}))^{-1}$  with $K=8a/(p^*)^2$.
\end{proposition}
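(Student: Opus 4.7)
The plan is to introduce a chameleon-type auxiliary process on a blown-up sub-particle space, interpret the TV distance in~\eqref{e:5} as a non-negative functional of that process, and show this functional decays exponentially at a rate inversely proportional to $\max_{i,j}\mathds{E}\hat M_{i,j}(G)$; a Cauchy--Schwarz-style estimate against the total sub-particle count will then produce the factor $\sqrt{a(m-1)+bn}$.

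Since $s=b/a$ is rational, I would first perform a Polya-urn blowup, replacing each of the $m-1$ unmarked particles by $a$ sub-particles and adding $b$ ``ghost'' sub-particles at every vertex, so the total unmarked sub-particle count is $a(m-1)+bn$. A BetaBin$(N,s,s)$ redistribution on an edge carrying $N$ particles can then be realised as an exchangeable urn draw over the $aN+2b$ sub-particles on that edge; in particular $\pi_\xi$ is the uniform distribution over unmarked sub-particle locations. On this blown-up space I would construct a chameleon process $(\xi_t,R_t,W_t,I_t)_{t\ge 0}$, where $R_t,W_t,I_t:V\to\mathds{N}_0$ count red, white and ink sub-particles at each vertex, starting with one red at $v$, one white drawn uniformly from the unmarked sub-particles, and all remaining sub-particles inked. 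The coupling will be designed so that projecting away colours recovers MaBB, and so that a Cauchy--Schwarz bound of the form
\[
\|\mathcal L((\xi_t,m_t))-\mathcal L((\xi_t,\tilde m_t))\|_\mathrm{TV}^2\le \bigl(a(m-1)+bn\bigr)\cdot\mathds{E}[D_t]
\]
holds for a suitable non-negative functional $D_t$ of $(R_t,W_t,I_t)$ which vanishes once all reds have been absorbed into ink.

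The core dynamical mechanism is the pinking rule: whenever a ringing edge contains at least one red and one white sub-particle, the symmetric beta-binomial reshuffle produces, with probability bounded below by $p^*$ (the explicit constant of Remark~\ref{R:C}), a configuration in which a red--white pair is absorbed into two ink sub-particles. Between pinking events the red and white sub-particles evolve as MaBB-type random walks whose expected colocation time on a common edge is at most $\max_{i,j}\mathds{E}\hat M_{i,j}(G)$. A supermartingale argument then shows that across every epoch of length $K\max_{i,j}\mathds{E}\hat M_{i,j}(G)$ with $K=8a/(p^*)^2$, the quantity $\mathds{E}[D_t]$ shrinks by at least the factor $e^{-1/(4\log(12a(p^*)^{-2}))}$; iterating over consecutive epochs yields the claimed exponential decay with rate $c(s)=(4K\log(12a(p^*)^{-2}))^{-1}$, which combined with the Cauchy--Schwarz display above proves~\eqref{e:5}.

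The principal obstacle is the multi-occupancy nature of the model: unlike the exclusion-process chameleon of~\cite{morris}, a ringing edge here may simultaneously carry many reds, many whites and a large ink mass, and the beta-binomial reshuffle couples all of them in an intricate way. The local update must therefore reassign both the underlying sub-particles \emph{and} their colour labels so as to (i) preserve the projection identities that tie the chameleon back to MaBB, and (ii) guarantee a pinking opportunity of probability at least $p^*$ whenever a red meets a white, regardless of how much ink sits on the same edge. Engineering this rule is what forces the rational-$s$ blowup and the explicit dependence of $K$ on $a$ and $p^*$.
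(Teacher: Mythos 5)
Your high-level picture -- a chameleon-type process on an enlarged particle system of $a(m-1)+bn$ coloured particles, with red/white annihilation events driven by meetings whose expected time is $\max_{i,j}\mathds{E}\hat M_{i,j}(G)$ and whose success probability is governed by $p^*$ -- does match the strategy of the paper. But the two load-bearing steps of the actual proof are missing or replaced by mechanisms that do not work as described.

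First, the link between the chameleon process and the total variation distance is not a Cauchy--Schwarz bound against an unspecified functional $D_t$. The paper proves an exact identity (Proposition~\ref{P:MaBBtoC}): $\mathds{P}((\xi_t,m_t)=(\zeta,y))=\mathds{E}[\ink_t(y)\indic{B_t^{\mathrm{C}}=\zeta}]/\chi(\xi(x))$, where $\ink_t(y)$ is the number of red plus half the number of pink particles at $y$. Establishing this identity is the technical heart of the argument: it forces the single-step placement probabilities $\theta(v)$ of Lemmas~\ref{L:thetaexist}--\ref{L:onestep} (so that the one-step expected ink reproduces the marked particle's transition kernel $\mathrm{P}_{e,\xi,\xi'}$), and its proof requires the interpolating processes $M^r$ and an induction over update times. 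You correctly flag this as ``the principal obstacle'' but you do not resolve it, and without it the claimed inequality $\|\cdot\|_{\mathrm{TV}}^2\le(a(m-1)+bn)\,\mathds{E}[D_t]$ has no candidate $D_t$.

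Second, your colour dynamics are monotone (red--white pairs are permanently ``absorbed into ink''), which is structurally incompatible with the identity above: since $\mathds{P}(m_t=y\mid\xi)$ must converge to $\pi_{\xi_t}(y)=\chi(\xi_t(y))/(a(m-1)+bn)>0$ for every $y$, the redness must be able to spread until it fills the entire system, not disappear. In the paper the total ink is a \emph{martingale} (Lemma~\ref{L:inkmart}): pinkened pairs are resolved at depinking times by fair coin flips into all-red or all-white, the TV distance is bounded by $1-\mathds{E}[\ink_t/(a(m-1)+bn)\mid\mathrm{Fill}]$ where $\mathrm{Fill}$ is the event that ink is absorbed at its maximum, and the factor $\sqrt{a(m-1)+bn}$ comes from the analysis of this conditioned martingale (Lemma~\ref{L:expdec}), not from Cauchy--Schwarz. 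Without the conditioning on $\mathrm{Fill}$ the expected ink is constant in $t$ and no decay is possible. The round structure with red--white pairing, the reservation mechanism of Lemma~\ref{L:pairres}, and the exponential moments of depinking times (Lemmas~\ref{L:depinkexp}--\ref{L:expmoment}) are what turn the per-round loss of Proposition~\ref{P:lossred} into the stated exponential rate; your ``supermartingale argument'' leaves all of this unconstructed. (Also, the correct initialisation places all $\chi(\xi(x))$ non-black particles at $x$ as red and all others as white, not a single red--white pair.)
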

\begin{proof}[Proof of Theorem~\ref{T:betabin}]
Combining~\eqref{e:1}--\eqref{e:5},
\[\max_{\zeta,\zeta'}\|\mathcal{L}(\zeta_t)-\mathcal{L}(\zeta'_t)\|_{\mathrm{TV}}\le 2mK_0  e^{-t/(4K\max_{i,j}\mathds{E}\hat M_{i,j}(G)\log(12a(p^*)^{-2}))}\sqrt{a(m-1)+bn}.\]
Thus we deduce that there exists a universal $C>0$ such that if \[t\ge Ca(p^*)^{-2}\log(12a(p^*)^{-2})\log((am+bn)/\varepsilon)\max_{i,j}\mathds{E}\hat M_{i,j}(G),\] then the total variation distance between $\mathcal{L}(\zeta_t)$ and $\mathcal{L}(\zeta_t')$ is at most $\varepsilon$ for any initial configurations $\zeta$ and $\zeta'$, so the statement of Theorem~\ref{T:betabin} holds.
\end{proof}
\

\subsection{Outline of the rest of the paper}
The rest of the paper is structured as follows.
In Section~\ref{S:BBprops} we identify five key properties enjoyed by the BBSP, which includes writing the equilibrium distribution~\eqref{e:eqmbb} explicitly in terms of $a$ and $b$ (the coprime integers from Remark~\ref{R:C}). In Section~\ref{S:MaBB}, we give the construction of the MaBB process; firstly we present the dynamics of a single step, and then we show how the MaBB can be constructed `graphically'. 

The chameleon process is constructed in Section~\ref{S:cham}. We again give the dynamics of a single step, before showing how the same graphical construction can be used to build the entire trajectory of the chameleon process. Properties of the chameleon process, which allow us to make the connection to the MaBB and, ultimately, prove Proposition~\ref{P:cham}, are presented in Sections~\ref{S:champrops} and~\ref{s:loss}.
%

We complete the proof of Theorem~\ref{T:betabin} in Section~\ref{S:proof}. We present the proof of Proposition~\ref{P:linelower} in Section~\ref{S:lower}. An appendix follows, in which we collect some of the proofs requiring lengthy case analyses. Finally, we give a possible simulation of the chameleon process over three time steps to illuminate the reader further on its evolution.

\section{Key properties of the beta-binomial splitting process}\label{S:BBprops}
We fix $s\in\mathds{Q}$ positive (with $s=b/a$ for $a$ and $b$ coprime), connected graph $G$ of size $n\in\mathds{N}$, and integer $m\ge2$, and demonstrate five properties of BB($G,s,m$) needed to prove Theorem~\ref{T:betabin}.

For $e\in E$ and $\xi,\xi'\in\Omega_{G,m}$, we denote by $\mathrm{P}_e^{\mathrm{BB}(G,s,m)}(\xi,\xi')$ the probability that, given  the BB($G,s,m)$ configuration is $\xi$ and edge $e$ rings, the new configuration is $\xi'$. Further, for $v\in V$, we also write $C_{\xi,v}$ for the BB$(G,s,m+1)$ configuration which satisfies $C_{\xi,v}(u)=\xi(u)+\delta_v(u)$, for $u\in V$.
\begin{proposition}\label{P:bbprop} $BB(G,s,m)$ satisfies the following properties:
\begin{enumerate}[A.]
\item\label{a:state} BB$(G,s,m)$ is irreducible on $\Omega_{G,m}$.
\item\label{a:eqm} BB$(G,s,m)$ is reversible with equilibrium distribution 
\begin{align}\label{e:equil}
\pi^{\mathrm{BB}(G,s,m)}(\xi)\propto\prod_{\substack{v\in V:\\\xi(v)>0}}\frac1{\xi(v)!}\prod_{i=0}^{\xi(v)-1}(ai+b),\qquad\xi\in\Omega_{G,m}.
\end{align}

\item\label{a:sym} Updates are symmetric: if the configuration of BB$(G,s,m)$ is $\xi$ and edge $e=\{v,w\}$ rings to give new configuration~$\xi'$, then $\xi'(v)\stackrel{d}{=}\xi'(w)$.
\item \label{a:prob} Updates have a chance to be near even split: There exists probability $p^*\in(0,1/3)$ such that
\begin{itemize}
\item if the configuration  of BB$(G,s,m)$ is $\xi$ with $\xi(v)+\xi(w)\ge 2$ and edge $e=\{v,w\}$ rings, with probability at least $p^*$, the new configuration $\xi'$ has \[\xi'(v)\in\left[\frac13(\xi(v)+\xi(w)),\frac23(\xi(v)+\xi(w))\right],\]
\item if the configuration  of BB$(G,s,m)$ is $\xi$ with $\xi(v)+\xi(w)=2$ and edge $e=\{v,w\}$ rings,  the probability that both particles will be on the same vertex in the new configuration is at least $2p^*$.
\end{itemize} 
Moreover, it suffices to take $p^*=(5/12)^{2s}/(6B(s,s))$ for $s<20$ and $p^*=\frac16(1-\frac{20}{s+1})$ for $s\ge20$.
\item\label{a:MaBBindep} The heat kernel satisfies the following identity:  for any $\xi,\xi'\in\Omega_{G,m}$, $e=\{v,w\}\in~E$,
\begin{align*}
&(\xi'(v)+1)\mathrm{P}_e^{\mathrm{BB}(G,s,m+1)}(C_{\xi,v},C_{\xi',v})+(\xi'(w)+1)\mathrm{P}_e^{\mathrm{BB}(G,s,m+1)}(C_{\xi,v},C_{\xi',w})\\&=(\xi(v)+\xi(w)+1)\mathrm{P}_e^{\mathrm{BB}(G,s,m)}(\xi,\xi').
\end{align*}
\end{enumerate}
\end{proposition}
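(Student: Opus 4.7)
\textbf{Proof plan for Proposition~\ref{P:bbprop}.} The strategy is to handle each of the five parts separately, reducing each to a calculation with the $\mathrm{BetaBin}(N,s,s)$ probability mass function
\[
\Prob(X=k)=\binom{N}{k}\frac{B(k+s,N-k+s)}{B(s,s)},\qquad k\in\{0,\ldots,N\},
\]
applied to $N=\xi(v)+\xi(w)$ on the ringing edge $e=\{v,w\}$. Parts A, B, C, E are routine verifications. For A, strict positivity of the pmf on $\{0,\ldots,N\}$ means any single-edge redistribution is realisable, so concatenating updates along a spanning tree of $G$ connects any two configurations in $\Omega_{G,m}$. For C, the symmetry $B(k+s,N-k+s)=B(N-k+s,k+s)$ gives $X\stackrel{d}{=}N-X$, hence the marginals $\xi'(v)$ and $\xi'(w)$ agree in law. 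For B, I would verify detailed balance $\pi(\xi)\mathrm{P}_e(\xi,\xi')=\pi(\xi')\mathrm{P}_e(\xi',\xi)$ edge-by-edge: the binomial coefficient in the pmf cancels the factorial denominators in~\eqref{e:eqmbb}, and the beta function pairs with the $\Gamma(s+\xi(v))\Gamma(s+\xi(w))$ numerator to match. Passing from~\eqref{e:eqmbb} to the explicit form in~\eqref{e:equil} then uses $\Gamma(s+k)/\Gamma(s)=\prod_{i=0}^{k-1}(s+i)$ with $s=b/a$ to absorb a $\xi$-independent factor $\Gamma(s)^n a^{-m}$ into the normalisation. For E, setting $k=\xi'(v)$ and substituting the pmf, the prefactors satisfy
\[
(k+1)\binom{N+1}{k+1}=(N-k+1)\binom{N+1}{k}=(N+1)\binom{N}{k},
\]
and the beta-function identity $B(k+s+1,N-k+s)+B(k+s,N-k+s+1)=B(k+s,N-k+s)$, immediate from $\Gamma(z+1)=z\Gamma(z)$ in the common denominator $\Gamma(N+1+2s)$, closes the identity.

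The main obstacle is Part D, where explicit numerical constants are prescribed. I would split on the size of $s$. For $s\ge20$ the beta-binomial concentrates: $\E X=N/2$ and $\Var X=N(N+2s)/(4(2s+1))$. Chebyshev applied to $|X-N/2|\le N/6$ produces the stated $\tfrac16(1-20/(s+1))$ once $N$ is large enough, and for the small values of $N$ where Chebyshev is too weak one enumerates directly using Pochhammer ratios. The $N=2$ bullets are handled by the closed forms $\Prob(X=1)=s/(2s+1)$ and $\Prob(X\in\{0,2\})=(s+1)/(2s+1)$. For $s<20$ the distribution can become bimodal and Chebyshev is useless for small $N$, so I would instead lower bound the pmf pointwise on the band $[N/3,2N/3]$. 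Using an entropy-type lower bound on $\binom{N}{k}$, monotonicity and symmetry of the beta function to control $B(k+s,N-k+s)$ by its value at the worst $k$ in the band (which is at the endpoints, by log-concavity of $\log\Gamma$), and a direct manipulation of the ratio to $B(s,s)$, each central term is at least of order $(5/12)^{2s}/(N\,B(s,s))$; summing over the $\Theta(N)$ central integers produces the advertised $p^*=(5/12)^{2s}/(6B(s,s))$.

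I expect the routine parts A, B, C, E to each admit a short argument. The delicate step is tracking the inequalities in Part D carefully so that the precise constants from the statement emerge; since these feed through Remark~\ref{R:C} directly into the constant $C(s)$ of Theorem~\ref{T:betabin}, the aim is to avoid unnecessary slack rather than to establish the tightest possible constants. A small additional bookkeeping step, which I would place at the start of Section~\ref{S:BBprops}, records that $p^*\in(0,1/3)$ by checking that the given expressions satisfy $p^*<1/3$ on each regime; this is used to ensure the second bullet of D can be stated uniformly.
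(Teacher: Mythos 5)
Your treatment of Properties~\ref{a:state}, \ref{a:eqm}, \ref{a:sym} and~\ref{a:MaBBindep} matches the paper's proof in substance: irreducibility is immediate, Property~\ref{a:sym} is the symmetry $X\stackrel{d}{=}N-X$ of $\mathrm{BetaBin}(N,s,s)$, Property~\ref{a:eqm} reduces to rewriting $\Gamma(s+k)/\Gamma(s)=\prod_{i=0}^{k-1}(s+i)$ with $s=b/a$ and absorbing $\xi$-independent factors into the normalisation, and your Property~\ref{a:MaBBindep} computation with $(k+1)\binom{N+1}{k+1}=(N-k+1)\binom{N+1}{k}=(N+1)\binom{N}{k}$ and $B(x,y)=B(x+1,y)+B(x,y+1)$ is exactly the paper's.

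Property~\ref{a:prob} is where you diverge, and where your plan has a real weakness. The paper does not analyse the beta-binomial pmf directly: it conditions on the latent Beta variable, sampling $Y\sim\mathrm{Beta}(s,s)$ first and then $\mathrm{Bin}(N,Y)$. Step one is a one-dimensional bound, $\Prob(Y\in[5/12,7/12])\ge(5/12)^{2s}/(2B(s,s))$ for $s\ge1$ (density minimised at the interval endpoints), with Chebyshev on $Y$ giving $\tfrac12(1-\tfrac{20}{s+1})$ for $s\ge20$; step two is the observation that for every $y\in[5/12,7/12]$ and every $N\ge2$, $\Prob(\mathrm{Bin}(N,y)\in[N/3,2N/3])$ is minimised at $N=4$, $y=7/12$, where it equals $1225/3456>1/3$. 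The product gives $p^*$ uniformly in $N$ in two clean steps. Your direct routes both break down precisely at small $N$: Chebyshev on the beta-binomial with $\Var X=N(N+2s)/(4(2s+1))$ is vacuous unless $N\gtrsim 9s/(s-4)$ (for $N=2$ the bound is negative for every $s$), so the "enumerate the remaining cases" step is carrying an unbounded-in-$s$ family of closed-form verifications; and for $s<20$ your pointwise pmf bound relies on entropy/Stirling asymptotics for $\binom{N}{k}$ and $B(k+s,N-k+s)$ that only control moderately large $N$, while the constant $1/6$ in $p^*=(5/12)^{2s}/(6B(s,s))$ requires the pointwise bound to hold with a specific implied constant (at least $1/2$ per central term), not merely "of order". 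Neither route is wrong in principle, but as written the hardest cases are exactly the ones your sketch does not cover; the conditioning trick is the idea you are missing, and it is also what makes the second bullet (the $N=2$ computation $\Prob(X=1)=s/(2s+1)$, which you do have) sit consistently alongside the first.
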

We defer the proof to Appendix~\ref{S:bbprop}.

\section{An auxiliary process: MaBB}\label{S:MaBB}
\subsection{Initial MaBB construction}

Recall that the MaBB must be constructed to satisfy the conditions of Proposition~\ref{P:mabbexists}. 
In addition to this, we shall also require that, given a particular edge $e$ rings, the law which governs the movement of the non-marked particles does not depend on the location of the marked particle
 (this will ensure that the uniform random variables in element 3 of the graphical construction given in Section~\ref{S:graphical} can be taken to be independent). This is not to say that the locations of the non-marked particles are independent of the location of the marked -- indeed they are not -- as the trajectory of the marked particle depends on the trajectories of the non-marked particles.

The MaBB is coupled to the BBSP so that it updates at the same times. When an edge rings in the BBSP, if the marked particle is absent from the vertices of the ringing edge, the update of the MaBB is as in the BBSP. If instead the marked particle is on one of the vertices of the ringing edge, we first remove the marked particle, then move the remaining (i.e.\! non-marked) particles as in the BBSP, and then add the marked particle back to one of the two vertices on the ringing edge with a certain law. Specifically, if $e=\{v,w\}$ is the ringing edge and the MaBB configuration before the update is $(\xi,v)$ and after the update the non-marked particles are in configuration $\xi'$, we place the marked particle on $v$ with probability
\[
\mathrm{P}_{e,\xi,\xi'}(v,v):=\frac{\xi'(v)+1}{\xi(v)+\xi(w)+1}\frac{\mathrm{P}_e^{\mathrm{BB}(G,s,m)}(C_{\xi,v},C_{\xi',v})}{\mathrm{P}_e^{\mathrm{BB}(G,s,m-1)}(\xi,\xi')},
\]and place it on $w$ with probability
\[
\mathrm{P}_{e,\xi,\xi'}(v,w):=\frac{\xi'(w)+1}{\xi(v)+\xi(w)+1}\frac{\mathrm{P}_e^{\mathrm{BB}(G,s,m)}(C_{\xi,v},C_{\xi',w})}{\mathrm{P}_e^{\mathrm{BB}(G,s,m-1)}(\xi,\xi')}.
\]This exhausts all possibilities (i.e.\! $\mathrm{P}_{e,\xi,\xi'}(v,v)+\mathrm{P}_{e,\xi,\xi'}(v,w)=1$) by  Property~\ref{a:MaBBindep}. Further, it is immediate from this construction that the movement of non-marked particles does not depend on the location of the marked particle. 

With this construction, we show that Proposition~\ref{P:mabbexists} holds.
\begin{proof}[Proof of Proposition~\ref{P:mabbexists}]
Recall $\Omega_{G,m}'$ denotes the set of configurations of the MaBB, and members of $\Omega_{G,m}'$ are of the form $(\xi,y)$ where $\xi\in\Omega_{G,m-1}$ with $\xi(v)$ denoting the number of non-marked particles at vertex $v$, and $y\in V$ denotes the location of the marked particle. 

Let $\mathrm{P}_e^{\mathrm{MaBB}}((\xi,v),(\xi',w))$ denote the probability that, given the MaBB configuration is $(\xi,v)$ and edge $e$ rings, the new configuration is $(\xi',w)$. Then in order to ensure that if we forget the marking in the MaBB we obtain the BBSP, it suffices that, for every edge $e=\{v,w\}$ and $\xi,\xi'\in\Omega_{G,m-1}$,
\begin{align}\label{e:contraction}
\mathrm{P}_e^\mathrm{MaBB}((\xi,v),(\xi',v))+\mathrm{P}_e^\mathrm{MaBB}((\xi,v),(\zeta,w))=\mathrm{P}_e^{\mathrm{BB}(G,s,m)}(C_{\xi,v},C_{\xi',v})
\end{align}
where $\zeta\in\Omega_{G,m-1}$ satisfies $\zeta(y)=\xi'(y)+\delta_v(y)-\delta_w(y)$ for $y\in V$. The reason is that if we forget the marking in either of MaBB configurations $(\xi',v)$ or $(\zeta,w)$, we obtain the same BBSP configuration $C_{\xi',v}$, and these are the only configurations with this property which are obtainable from $(\xi,v)$ when $e$ rings.

We see that~\eqref{e:contraction} holds as follows: 
\begin{align*}
&\mathrm{P}_e^\mathrm{MaBB}((\xi,v),(\xi',v))+\mathrm{P}_e^\mathrm{MaBB}((\xi,v),(\zeta,w))\\&=\mathrm{P}_{e,\xi,\xi'}(v,v)\mathrm{P}_e^{\mathrm{BB}(G,s,m-1)}(\xi,\xi')+\mathrm{P}_{e,\xi,\zeta}(v,w)\mathrm{P}_e^{\mathrm{BB}(G,s,m-1)}(\xi,\zeta)\\
&=\frac{\xi'(v)+1}{\xi(v)+\xi(w)+1}\mathrm{P}_e^{\mathrm{BB}(G,s,m)}(C_{\xi,v},C_{\xi',v})+\frac{\zeta(w)+1}{\xi(v)+\xi(w)+1}\mathrm{P}_e^{\mathrm{BB}(G,s,m)}(C_{\xi,v},C_{\zeta,w})\\
&=\frac{\xi'(v)+1}{\xi(v)+\xi(w)+1}\mathrm{P}_e^{\mathrm{BB}(G,s,m)}(C_{\xi,v},C_{\xi',v})+\frac{\xi'(w)}{\xi(v)+\xi(w)+1}\mathrm{P}_e^{\mathrm{BB}(G,s,m)}(C_{\xi,v},C_{\zeta,w})\\
&=\mathrm{P}_e^{\mathrm{BB}(G,s,m)}(C_{\xi,v},C_{\xi',v}),
\end{align*}
where the last equality uses $C_{\xi',v}=C_{\zeta,w}$.
\end{proof}

This description for the MaBB is useful as it clearly demonstrates that the movement of the non-marked particles does not depend on the location of the marked particle. There is an equivalent (distributionally-speaking) description of the MaBB which is useful for proving some other properties. Note that for $y\in\{v,w\}=e$, 
\begin{align}\label{e:MaBB2BBSP}
\mathrm{P}_e^\mathrm{MaBB}((\xi,v),(\xi',y))=\frac{\xi'(y)+1}{\xi(v)+\xi(w)+1}\mathrm{P}_e^{\mathrm{BB}(G,s,m)}(C_{\xi,v},C_{\xi',y}).
\end{align}
Thus an update of the MaBB from state $(\xi,v)$ when edge $e=\{v,w\}$ rings can be obtained by first removing the marking on the marked particle (but leaving it on the vertex) to obtain the BBSP configuration $C_{\xi,v}$, then updating according to the BBSP, which gives BBSP configuration $C_{\xi',y}$ with probability $\mathrm{P}_e^{\mathrm{BB}(G,s,m)}(C_{\xi,v},C_{\xi',y})$, and then choosing a particle from edge $e$ uniformly and applying a mark to it (so the marked particle will be on $y$ with probability $\frac{\xi'(y)+1}{\xi(v)+\xi(w)+1}$). We shall use this alternative description later in the paper (see the proof of Proposition~\ref{P:lossred}).

For $k\in\mathds{N}_0$, set $\chi(k)=ak+b$ with $a$ and $b$ the coprime integers from Property~\ref{a:eqm}. We call this the colour function. The importance of the colour function becomes apparent from the following result.
\begin{lemma}\label{L:colour}Fix vertices $v$ and $w$ with $e=\{v,w\}$ an edge of the graph. For any $\xi, \xi'\in\Omega_{G,m-1}$,
\begin{align*}\chi(\xi(v))\mathrm{P}_{e,\xi,\xi'}(v,v)+\chi(\xi(w))\mathrm{P}_{e,\xi,\xi'}(w,v)=\chi(\xi'(v)).
\end{align*}
\end{lemma}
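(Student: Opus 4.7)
The plan is to compute the left-hand side directly, using Property~\ref{a:eqm} (detailed balance) twice and Property~\ref{a:MaBBindep} (the heat kernel identity) once, exploiting the algebraic identity $f(C_{\xi,y}) = f(\xi)\chi(\xi(y))/(\xi(y)+1)$ that falls out of the product form of $\pi^{\mathrm{BB}(G,s,m)}$, where I write $f(\eta)$ for the unnormalised equilibrium weight in~\eqref{e:equil}. The key point is that $\chi(\xi(y))$ is precisely the ratio that makes detailed balance in the $m$-particle system convert between $C_{\xi,y}$-weights and $\xi$-weights.

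First I would substitute the definitions of $\mathrm{P}_{e,\xi,\xi'}(v,v)$ and $\mathrm{P}_{e,\xi,\xi'}(w,v)$ (the latter being the same formula with $v$ and $w$ swapped in the starting location) and clear the common factor $(\xi'(v)+1)/((\xi(v)+\xi(w)+1)\,\mathrm{P}_e^{\mathrm{BB}(G,s,m-1)}(\xi,\xi'))$. The identity then reduces to showing
\[
\sum_{y\in\{v,w\}}\chi(\xi(y))\,\mathrm{P}_e^{\mathrm{BB}(G,s,m)}(C_{\xi,y},C_{\xi',v}) \;=\; \frac{\chi(\xi'(v))(\xi(v)+\xi(w)+1)}{\xi'(v)+1}\,\mathrm{P}_e^{\mathrm{BB}(G,s,m-1)}(\xi,\xi').
\]
For each $y\in\{v,w\}$ I rewrite $\chi(\xi(y)) = (\xi(y)+1)f(C_{\xi,y})/f(\xi)$ and apply detailed balance in the $m$-particle BBSP, obtaining $\chi(\xi(y))\,\mathrm{P}_e^{\mathrm{BB}(G,s,m)}(C_{\xi,y},C_{\xi',v}) = (\xi(y)+1)f(C_{\xi',v})/f(\xi)\cdot \mathrm{P}_e^{\mathrm{BB}(G,s,m)}(C_{\xi',v},C_{\xi,y})$.

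Next I factor out the $y$-independent term $f(C_{\xi',v})/f(\xi)$ and apply Property~\ref{a:MaBBindep} with the roles of $\xi$ and $\xi'$ interchanged (and starting configuration $C_{\xi',v}$); this collapses the sum to $(\xi'(v)+\xi'(w)+1)\,\mathrm{P}_e^{\mathrm{BB}(G,s,m-1)}(\xi',\xi)$. I then use conservation on the ringing edge, namely $\xi(v)+\xi(w)=\xi'(v)+\xi'(w)$ whenever $\mathrm{P}_e^{\mathrm{BB}(G,s,m-1)}(\xi,\xi')>0$ (outside this case the lemma is vacuous since $\mathrm{P}_{e,\xi,\xi'}$ is then undefined), and substitute $f(C_{\xi',v})=f(\xi')\chi(\xi'(v))/(\xi'(v)+1)$ to isolate the factor $\chi(\xi'(v))(\xi(v)+\xi(w)+1)/(\xi'(v)+1)$. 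Finally, a second application of detailed balance, this time in the $(m-1)$-particle BBSP, converts $(f(\xi')/f(\xi))\,\mathrm{P}_e^{\mathrm{BB}(G,s,m-1)}(\xi',\xi)$ into $\mathrm{P}_e^{\mathrm{BB}(G,s,m-1)}(\xi,\xi')$, completing the identity.

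The proof is entirely a bookkeeping exercise, so there is no single hard step; the main risk is simply losing track of which factor of $\xi(v)+1$, $\xi'(v)+1$, or the $\chi$-ratio sits on which side after each substitution. The conceptual content — and the reason the lemma works out so cleanly — is that the colour function $\chi$ is designed to be exactly the one-particle-addition ratio of the equilibrium weight $f$, so that multiplication by $\chi(\xi(y))$ commutes with the detailed-balance flip between the $(m{-}1)$- and $m$-particle systems. This is what makes the sum over $y\in\{v,w\}$ on the left rearrange into precisely the Property~\ref{a:MaBBindep} combination on the right.
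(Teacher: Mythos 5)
Your argument is correct: the reduction after clearing the common factor is right, the ratio $f(C_{\xi,y})/f(\xi)=\chi(\xi(y))/(\xi(y)+1)$ does follow from the product form in~\eqref{e:equil}, the reversed application of Property~\ref{a:MaBBindep} (with $m$ shifted down by one) collapses the sum exactly as you claim, and the two detailed-balance flips close the loop; you also correctly flag the conservation constraint $\xi(v)+\xi(w)=\xi'(v)+\xi'(w)$ and the vacuity of the statement when $\mathrm{P}_e^{\mathrm{BB}(G,s,m-1)}(\xi,\xi')=0$. The route is genuinely different in organization from the paper's, even though the ingredients (edge-wise detailed balance and the heat-kernel identity of Property~\ref{a:MaBBindep}) are the same. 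The paper first proves that the MaBB itself is reversible with equilibrium $\pi^{\mathrm{MaBB}}((\xi,v))\propto\pi^{\mathrm{BB}(G,s,m)}(C_{\xi,v})(\xi(v)+1)$, computes the conditional law $\pi_\xi(v)=\chi(\xi(v))/(a(m-1)+bn)$, and then reduces the lemma to a detailed-balance identity for the marginal kernel $\hat{\mathrm{P}}^{\mathrm{MaBB}}_e$, proved by a longer term-by-term summation. Your computation stays entirely at the level of the BBSP kernels and is shorter and more self-contained; what the paper's detour buys is the reversibility of the MaBB and the explicit formula~\eqref{e:condpimips} for $\pi_\xi$, both of which are reused later (e.g.\! in Proposition~\ref{P:tvboundink}), so if you adopted your proof you would still need to establish those facts separately. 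The only cosmetic caveat is that you should be explicit that "detailed balance" is being invoked for the edge-conditional kernel $\mathrm{P}_e^{\mathrm{BB}}$ rather than merely for the full generator, as in~\eqref{e:piBBSP}; this is true and is what the paper uses, but it is marginally stronger than the bare statement of Property~\ref{a:eqm}.
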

The (yet to be defined) chameleon process will allow us to track possible locations of the marked particle in the MaBB, given the location of the non-marked particles. If we run the MaBB for a long time, and then observe that the configuration of non-marked particles is $\xi$, the probability the marked particle is on vertex $v$ will be close to $\pi_\xi(v)$ (defined in \eqref{e:pixi}). If we scale $\pi_\xi(v)$ by $a(m-1)+bn$, we obtain $\chi(\xi(v))$ (see \eqref{e:condpimips}). Together with reversibility, this is essentially the reason why Lemma~\ref{L:colour} is true. Our goal in the chameleon process will be to have $\chi(\xi(v))$ red particles on vertex $v$, for all $v$, as this will signal that the marked particle is ``mixed'' (see Proposition~\ref{P:tvboundink}). In fact, the chameleon process will always have $\chi(\xi(v))$ non-black particles on $v$ (they will be either red, white, or pink), when there are $\xi(v)$ black particles on $v$.
\begin{proof}[Proof of Lemma~\ref{L:colour}]
 Reversibility of the BBSP (Property~\ref{a:eqm}) gives that for any edge $e$ and configurations $\zeta,\zeta'\in\Omega_{G,m}$, \begin{align}\label{e:piBBSP}\pi^{\mathrm{BB}(G,s,m)}(\zeta)\mathrm{P}^{\mathrm{BB}(G,s,m)}_e(\zeta,\zeta')=\pi^{\mathrm{BB}(G,s,m)}(\zeta')\mathrm{P}^{\mathrm{BB}(G,s,m)}_e(\zeta',\zeta).\end{align}
For any $v,w\in V$ and $\xi$ and $\xi'$ which satisfy $\xi(v)+\xi(w)=\xi'(v)+\xi'(w)$, we have 
\[\sum_{y\in\{v,w\}}C_{\xi,v}(y)=\sum_{y\in\{v,w\}}C_{\xi,w}(y)=\sum_{y\in\{v,w\}}C_{\xi',v}(y)=\sum_{y\in\{v,w\}}C_{\xi',w}(y)=\xi(v)+\xi(w)+1.\]
Observe that $\mathrm{P}^\mathrm{MaBB}_e((\xi,v),(\xi',w))>0$ is equivalent to $ \mathrm{P}^\mathrm{MaBB}_e((\xi',w),(\xi,v))>0$ and implies $\xi(v)+\xi(w)=\xi'(v)+\xi'(w)$. Thus using~\eqref{e:MaBB2BBSP} and~\eqref{e:piBBSP}, we have
\begin{align*}
\pi^{\mathrm{BB}(G,s,m)}(C_{\xi,v})(\xi(v)+1)\mathrm{P}^\mathrm{MaBB}_e((\xi,v),(\xi',w))&=\pi(C_{\xi',w})(\xi'(w)+1)\mathrm{P}^\mathrm{MaBB}_e((\xi',w),(\xi,v)).
\end{align*}
By similar arguments we also have
\begin{align*}
&\pi^{\mathrm{BB}(G,s,m)}(C_{\xi,w})(\xi(w)+1)\mathrm{P}^\mathrm{MaBB}_e((\xi,w),(\xi',v))\\&=\pi^{\mathrm{BB}(G,s,m)}(C_{\xi',v})(\xi'(v)+1)\mathrm{P}^\mathrm{MaBB}_e((\xi',v),(\xi,w)),\\
\intertext{and}
&\pi^{\mathrm{BB}(G,s,m)}(C_{\xi,y})(\xi(y)+1)\mathrm{P}^\mathrm{MaBB}_e((\xi,y),(\xi',y))\\&=\pi^{\mathrm{BB}(G,s,m)}(C_{\xi',y})(\xi'(y)+1)\mathrm{P}^\mathrm{MaBB}_e((\xi',y),(\xi,y)),\qquad y\in\{v,w\}.
\end{align*}
Hence the MaBB process is reversible with equilibrium distribution \[
\pi^{\mathrm{MaBB}}((\xi,v))\propto \pi^{\mathrm{BB}(G,s,m)}(C_{\xi,v})(\xi(v)+1).
\]
For each $\xi\in\Omega_{G,m-1}$, we define \begin{align}\label{e:pixi}\pi_\xi(v):=\pi^\mathrm{MaBB}((\xi,v))/\sum_y\pi^\mathrm{MaBB}((\xi,y)),\end{align} so that
\[
\pi_\xi(v)=\frac{\pi^{\mathrm{BB}(G,s,m)}(C_{\xi,v})(\xi(v)+1)}{\sum_y\pi^{\mathrm{BB}(G,s,m)}(C_{\xi,y})(\xi(y)+1)}.
\]
Property~\ref{a:eqm} gives that 
\[
\pi^{\mathrm{BB}(G,s,m)}(C_{\xi,v})\propto \frac{a\xi(v)+b}{\xi(v)+1}\prod_{\substack{w\in V:\\\xi(w)>0}}\frac1{\xi(w)!}\prod_{i=0}^{\xi(w)-1}(ai+b),
\]
and hence
\begin{align}\label{e:condpimips}
\pi_\xi(v)=\frac{a\xi(v)+b}{a(m-1)+bn}=\frac{\chi(\xi(v))}{a(m-1)+bn}.
\end{align}
It follows that to prove the lemma, it suffices to show that
\begin{align*}
\pi_\xi(v)\mathrm{P}_{e,\xi,\xi'}(v,v)+\pi_\xi(w)\mathrm{P}_{e,\xi,\xi'}(w,v)=\pi_{\xi'}(v),
\end{align*}
equivalently,
\begin{align}
\label{e:suff}
\frac{\pi^\mathrm{MaBB}((\xi,v))}{\sum_y\pi^\mathrm{MaBB}((\xi,y))}\mathrm{P}_{e,\xi,\xi'}(v,v)+\frac{\pi^\mathrm{MaBB}((\xi,w))}{\sum_y\pi^\mathrm{MaBB}((\xi,y))}\mathrm{P}_{e,\xi,\xi'}(w,v)=\frac{\pi^\mathrm{MaBB}((\xi',v))}{\sum_y\pi^\mathrm{MaBB}((\xi',y))}.
\end{align}
Note that 
\[
\mathrm{P}_{e,\xi,\xi'}(v,v)=\frac{\mathrm{P}^\mathrm{MaBB}_e((\xi,v),(\xi',v))}{\sum_{y\in \{v,w\}}\mathrm{P}^\mathrm{MaBB}_e((\xi,v),(\xi',y))}
=\frac{\mathrm{P}^\mathrm{MaBB}_e((\xi,v),(\xi',v))}{\hat{\mathrm{P}}^\mathrm{MaBB}_{e}(\xi,\xi')},\]
where we define $\hat{\mathrm{P}}^\mathrm{MaBB}_{e}(\xi,\xi'):=\sum_{y\in \{v,w\}}\mathrm{P}^\mathrm{MaBB}_e((\xi,v),(\xi',y))$ and note that this does not depend on $v$. Thus the left-hand side of~\eqref{e:suff} can be written as
\begin{align*}
&\frac{\pi^\mathrm{MaBB}((\xi,v))\mathrm{P}^\mathrm{MaBB}_e((\xi,v),(\xi',v))+
\pi^\mathrm{MaBB}((\xi,w))\mathrm{P}^\mathrm{MaBB}_e((\xi,w),(\xi',v))}{\hat{\mathrm{P}}^\mathrm{MaBB}_{e}(\xi,\xi')\sum_y\pi^\mathrm{MaBB}((\xi,y))}\\
&=\frac{\hat{\mathrm{P}}^\mathrm{MaBB}_{e}(\xi',\xi)\pi^\mathrm{MaBB}((\xi',v))}{\hat{\mathrm{P}}^\mathrm{MaBB}_{e}(\xi,\xi')\sum_y\pi^\mathrm{MaBB}((\xi,y))}\end{align*}
using the reversibility of MaBB. Thus showing~\eqref{e:suff} is equivalent to showing
\begin{align}
\hat{\mathrm{P}}^\mathrm{MaBB}_{e}(\xi',\xi)\sum_y\pi^\mathrm{MaBB}((\xi',y))=\hat{\mathrm{P}}^\mathrm{MaBB}_{e}(\xi,\xi')\sum_y\pi^\mathrm{MaBB}((\xi,y)).
\end{align} We use reversibility to show this identity:
\begin{align*}
&\hat{\mathrm{P}}^\mathrm{MaBB}_{e}(\xi',\xi)\sum_y\pi^\mathrm{MaBB}((\xi',y))\\
&=\pi^\mathrm{MaBB}((\xi',v))\hat{\mathrm{P}}^\mathrm{MaBB}_{e}(\xi',\xi)+\pi^\mathrm{MaBB}((\xi',w))\hat{\mathrm{P}}^\mathrm{MaBB}_{e}(\xi',\xi)\\&\phantom{=}+\sum_{y\notin\{v,w\}}\pi^\mathrm{MaBB}((\xi',y))\hat{\mathrm{P}}^\mathrm{MaBB}_{e}(\xi',\xi)\\
&=\pi^\mathrm{MaBB}((\xi',v))\sum_{y\in\{v,w\}}\mathrm{P}^\mathrm{MaBB}_e((\xi',v),(\xi,y))+\pi^\mathrm{MaBB}((\xi',w))\sum_{y\in\{v,w\}}\mathrm{P}^\mathrm{MaBB}_e((\xi',w),(\xi,y))\\
&\phantom{=}\,+\sum_{y\notin\{v,w\}}\pi^\mathrm{MaBB}((\xi',y))\sum_z \mathrm{P}^\mathrm{MaBB}_e((\xi',z),(\xi,y))\\
&=\sum_{y\in\{v,w\}}\pi^\mathrm{MaBB}((\xi,y))\left(\mathrm{P}^\mathrm{MaBB}_e((\xi,y),(\xi',v))+\mathrm{P}^\mathrm{MaBB}_e((\xi,y),(\xi',w))\right)\\&\phantom{=}+\sum_{y\notin\{v,w\}}\pi^\mathrm{MaBB}((\xi',y))\mathrm{P}^\mathrm{MaBB}_e((\xi',y),(\xi,y))\\
&=\pi^\mathrm{MaBB}((\xi,v))\hat{\mathrm{P}}^\mathrm{MaBB}_{e}(\xi,\xi')+\pi^\mathrm{MaBB}((\xi,w))\hat{\mathrm{P}}^\mathrm{MaBB}_{e}(\xi,\xi')\\&\phantom{=}+\sum_{y\notin\{v,w\}}\pi^\mathrm{MaBB}((\xi,y))\mathrm{P}^\mathrm{MaBB}_e((\xi,y),(\xi',y))\\
&=\pi^\mathrm{MaBB}((\xi,v))\hat{\mathrm{P}}^\mathrm{MaBB}_{e}(\xi,\xi')+\pi^\mathrm{MaBB}((\xi,w))\hat{\mathrm{P}}^\mathrm{MaBB}_{e}(\xi,\xi')\\&\phantom{=}+\sum_{y\notin\{v,w\}}\pi^\mathrm{MaBB}((\xi,y))\sum_z\mathrm{P}^\mathrm{MaBB}_e((\xi,y),(\xi',z))\\
&=\pi^\mathrm{MaBB}((\xi,v))\hat{\mathrm{P}}^\mathrm{MaBB}_{e}(\xi,\xi')+\pi^\mathrm{MaBB}((\xi,w))\hat{\mathrm{P}}^\mathrm{MaBB}_{e}(\xi,\xi')\\&\phantom{=}+\sum_{y\notin\{v,w\}}\pi^\mathrm{MaBB}((\xi,y))\hat{\mathrm{P}}^\mathrm{MaBB}_{e}(\xi,\xi')\\
&=\hat{\mathrm{P}}^\mathrm{MaBB}_{e}(\xi,\xi')\sum_y\pi^\mathrm{MaBB}((\xi,y)).\qedhere
\end{align*}
\end{proof}

\subsection{Graphical construction of the MaBB}\label{S:graphical}

We present a `graphical construction' of the MaBB, which will also be used for the chameleon process. The motivation behind this construction is that it contains all of the random elements from which one can then deterministically construct both the MaBB and the chameleon process. In particular, it allows us to construct the MaBB and the chameleon process on the same probability space.

The graphical construction is comprised of the following elements:
\begin{enumerate}
\item A Poisson process of rate $\sum_e r_e$ which gives the times $\{\tau_1,\tau_2,\ldots\}$ at which edges ring (we also set $\tau_0=0$).
\item A sequence of edges $\{e_r\}_{r\ge 1}$ so that edge $e_r$ is the edge which rings at the $r$th time $\tau_r$ of the Poisson process; for each $r\ge1$ and $e\in E$, $\mathds{P}(e_r=e)\propto r_e$.
\item For each $r\ge 1$ an independent uniform random variable $U_r^b$ on $[0,1]$ (which will be used to determine how non-marked particles in the MaBB update at time $\tau_r$ when edge $e_r$ rings), and an independent uniform random variable $U_r^c$ on $[0,1]$ (used for updating the location of the marked particle in MaBB).

\item A sequence of independent fair coin flips $\{d_\ell\}_{\ell\ge1}$ (Bernoulli$(1/2)$ random variables).
These are only used in the chameleon process.
\end{enumerate}

We now demonstrate how the graphical construction is used to build the MaBB of interest, given an initial configuration.

Fix $u\in[0,1]$, $e=\{v,w\}\in E$, and $\xi\in\Omega_{G,m-1}$. Without loss of generality, suppose $v<w$ (recall $V=[n]$) and suppose $\{\xi_1,\ldots,\xi_r\}$ are the possible configurations of the non-marked particles that can be obtained from non-marked configuration $\xi$ when edge $e$ rings. Without loss of generality suppose they are ordered so that 
\begin{align}\label{e:ordering}|\xi_i(v)-\frac12(\xi(v)+\xi(w))|\le|\xi_j(v)-\frac12(\xi(v)+\xi(w))|\quad\text{ if and only if }\quad i\le j,\end{align} 
with any ties resolved by ordering earlier the configuration which places fewer particles on $v$.

We now define two deterministic functions $\mathrm{MaBB}:[0,1]\times E\times\Omega_{G,m-1}\to\Omega_{G,m-1}$ and $\mathrm{MaBB}^*:[0,1]\times[0,1]\times E\times \Omega_{G,m-1}\times V\to V$. 

Firstly, we define $\mathrm{MaBB}(u,e,\xi)$ to be the configuration of non-marked which satisfies, for each $1\le i\le r$,
\[
\mathrm{MaBB}(u,e,\xi)=\xi_i\quad\mathrm{ if }\quad \sum_{j<i}\mathrm{P}_e^{\mathrm{BB}(G,s,m-1)}(\xi,\xi_j)<u\le \sum_{j\le i}\mathrm{P}_e^{\mathrm{BB}(G,s,m-1)}(\xi,\xi_j).
\]
When $u$ is chosen according to a uniform on $[0,1]$ this gives that MaBB$(u,e,\xi)$ has the law of the new configuration of non-marked particles (given $e$ rings and the old configuration is $\xi$), i.e.\! for a uniform $U$ on $[0,1]$,  $\mathrm{MaBB}(U,e,\xi)$ has law $\mathrm{P}_e^{\mathrm{BB}(G,s,m-1)}(\xi,\cdot)$.

By Property~\ref{a:prob}, if $\xi(v)+\xi(w)\ge 2$, then \begin{align}\label{e:ulesspimplication}u\le p^*\quad\implies\quad\mathrm{MaBB}(u,e,\xi)(v)\in\left[\frac13(\xi(v)+\xi(w)),\frac23(\xi(v)+\xi(w))\right]\end{align} (this is the reason for choosing the ordering of the new configurations as described in~\eqref{e:ordering}, and is used in the proof of Proposition~\ref{P:lossred}).

 Secondly, for $m\in V$ and $u,u'\in[0,1]$  we set 
\begin{align}\label{e:MaBBdef}
\mathrm{MaBB}^*(u,u',e,\xi,m)=\begin{cases}
m&\mbox{if } m\notin e \mbox{ or }m\in e \mbox{ and }u'<\mathrm{P}_{e,\xi,\mathrm{MaBB}(u,e,\xi)}(m,m), \\e\setminus\{m\}&\mbox{otherwise.}
\end{cases}
\end{align}
With this construction, we have that for $U'\sim\text{Unif}\,[0,1]$,
\begin{align*}
\mathds{P}(\mathrm{MaBB}^*(u,U',\{v,w\},\xi,v)=w)&=\mathrm{P}_{\{v,w\},\xi,\mathrm{MaBB}(u,\{v,w\},\xi)}(v,w),\\
\mathds{P}(\mathrm{MaBB}^*(u,U',\{v,w\},\xi,v)=v)&=\mathrm{P}_{\{v,w\},\xi,\mathrm{MaBB}(u,\{v,w\},\xi)}(v,v),
\end{align*}
and so $\mathrm{MaBB}^*(U,U',\{v,w\},\xi,v)$ has the law of the new location of the marked particle, when edge $\{v,w\}$ updates with the marked on $v$, the non-marked particles being in configuration $\xi$, and $U,\,U'$ independent uniforms on $[0,1]$.

We can now obtain a realisation of the MaBB as follows.  Suppose we initialise at state $(\xi_0,x_0)$. Given the state at time $\tau_i$, the MaBB remains constant until the next update at time $\tau_{i+1}$, at which time
\[
\xi_{\tau_{i+1}}=\mathrm{MaBB}(U_{i+1}^b,e_{i+1},\xi_{\tau_i}),\quad m_{\tau_{i+1}}=\mathrm{MaBB}^*(U_{i+1}^b,U_{i+1}^c,e_{i+1},\xi_{\tau_i},m_{\tau_i}).
\]

\section{The Chameleon process}\label{S:cham}
\subsection{Introduction to the chameleon process}
We introduce a chameleon process in order to prove Proposition~\ref{P:cham}. In the chameleon process associated with a MaBB, the non-marked particles are replaced with black particles (which are coupled to evolve identically to  the non-marked particles). The purpose of the chameleon process is to provide a way to track how quickly the marked particle in the MaBB becomes mixed. We achieve this via the existence of red particles in the chameleon process, with each additional red particle on a vertex corresponding to an increase in the probability that in the MaBB, the marked particle is on that vertex. 
It turns out that bounding how long it takes the chameleon process to reach an all-red state (where there are $a\xi(v)+b$ red particles on each vertex $v$ when the black particles are in configuration $\xi$) when we condition on this happening before reaching a no-red state (an event we call Fill) is key to proving Proposition~\ref{P:cham}. This calculation is carried out in Section~\ref{s:loss} with the proof of Proposition~\ref{P:cham} in Section~\ref{S:proof}.

As stated previously, the chameleon process will be built using the graphical construction. The chameleon process is an interacting particle system consisting of coloured particles moving on the vertices of a graph (the same graph as the MaBB). Particles can be of four colours: black, red, pink and white. 
Each vertex $v$ in the chameleon process is occupied at a given time by a certain number, $B(v)$, of black particles and $\chi(B(v))$ non-black particles (recall $\chi$ is the colour function).

Associated with each vertex is a notion of the amount of \emph{redness}, called $\ink$ (this terminology is consistent with previous works using a chameleon process). Specifically we write $\ink_t(v)$ for the number of red particles plus half the number of pink particles at vertex $v$ at time $t$ in the chameleon process. If there are $B(v)$ black particles at vertex $v$ at time $t$ then $0\le\ink_t(v)\le \chi(B(v))$, with the minimum (resp.\! maximum) attained when all non-black particles are white (resp.\! red).

We use the initial configuration of the MaBB to initialise the chameleon process. Each non-marked particle on a vertex in the MaBB configuration corresponds to a black particle at the same vertex in the chameleon process. The vertex with the marked particle in the MaBB is initialised in the chameleon process with all non-black particles as red. Every other vertex has all non-black particles as white.

The chameleon process consists of rounds of length $T$ (a parameter of the process), and at the end of some rounds is a depinking time. Whether we have a depinking time (at which we remove all pink particles, replacing them all with either red or white particles) will depend on the numbers of red, pink and white particles in the graph at the end of that round.

 If at the start of the round there are fewer red than white particles then we shall assign to each red particle a unique white particle; thus each red particle has a paired white particle. Later, our interest will be in determining how many red particles `meet' their paired white particle during a round, where two particles are said to meet if, at some moment in time, they are both on the same ringing edge (unless they start on the same vertex, they will be on different vertices when they meet). If there are fewer white than red particles at the start of the round we shall reverse roles so that each white particle gets a unique paired red particle.

In the chameleon process we can only create new pink particles (by re-colouring red and white particles) at the meeting times of paired particles. It is this restriction which will lead to us taking the round length to be the maximal expected meeting time of two random walks.

In previous works using other versions of the chameleon process, the idea of using paired particles is not used (it is not needed). It becomes useful here because \emph{a priori} there is no constant (not depending on the number of particles or size of the graph) bound on the number of particles which may occupy a vertex. As a result, without using pairing, it turns out we would need to understand the movement of 3 coloured particles simultaneously, rather than the movement of one red and one white until their meeting time.

\subsection{A single step of the chameleon process}
Our construction of the chameleon process is such that when an edge rings, we first observe how the non-marked particles move in the MaBB and move the black particles in the same way. Given the new configuration of black particles, the number of non-black particles on the vertices is determined by the colour function $\chi$. After observing the movement of the black particles, we shall then determine the movement of the red particles (and if we have to pinken any) then the pre-existing pink particles (i.e.\! not any just-created pink particles) and finally the white particles.

To specify more precisely an update, we introduce some notation. We shall define a probability \[\theta(v)=\theta(v,e,B(v),B(w),B'(v),B'(w),R(v),R(w),P(v),P(w))\] which is a function of a vertex $v$, an edge containing that vertex $e=\{v,w\}$, and non-negative integers \[B(v), B(w), B'(v), B'(w), R(v),R(w), P(v), P(w)\] which satisfy $B'(v)\le B(v)+B(w)$, $B(v)+B(w)=B'(v)+B'(w)$, $R(v)+P(v)\le \chi(B(v))$, $R(w)+P(w)\le \chi(B(w))$.  The integers $B,R,P$ shall represent the numbers of black/red/pink on the vertices of the edge $e$ just prior to its ringing, and $B'(v)$, $B'(w)$ the number of black particles on $v, w$ just after $e$ rings. 

For simplicity we write $R_{v,w}$ for $R(v)+R(w)$ and $P_{v,w}$ for $P(v)+P(w)$.

To define $\theta(v)$ we also define integers
\begin{align*}
\ell(v)=\ell(v,R_{v,w},B'(w))&:=\{R_{v,w}-\chi(B'(w))\}\vee0,\\
u(v)=u(v,R_{v,w},B'(v))&:=\chi(B'(v))\wedge R_{v,w},\\
u(w)=u(w,R_{v,w},B'(w))&:=\chi(B'(w))\wedge R_{v,w}=R_{v,w}-\ell(v),\\
\ell^P(v)=\ell^P(v,R_{v,w},P_{v,w},B'(w))&:=\{P_{v,w}-\chi(B'(w))+u(w)\}\vee0,\\
u^P(v)=u^P(v,R_{v,w},P_{v,w},B'(w))&:=\{\chi(B'(v))-u(v)\}\wedge P_{v,w}.
\end{align*}
We also set $\ell(w)=R_{v,w}-u(v)$. The idea behind these definitions is the following. The values of $\chi(B'(v))$ and $\chi(B'(w))$ impose restrictions on the number of non-black particles which can occupy vertices $v$ and $w$ after the update. For example, the number of red particles on $v$ cannot exceed $\chi(B'(v))$; this gives an upper limit of $u(v)$ for the number of red particles that we can place onto $v$ after the update. On the other hand, the number of red particles on $w$ after the update cannot exceed $\chi(B'(w))$, which in turn means that the number of red particles on $v$ has to be at least $R_{v,w}-\chi(B'(w))$, giving a lower limit of $\ell(v)$. The difference between these values, i.e.\! $u(v)-\ell(v)=R_{v,w}-\ell(v)-\ell(w)$ is the number of \emph{flexible reds}, that is, the number of red particles which can be either on $v$ or $w$ after the update. It is these flexible reds that we get a chance to pinken, with pink particles representing particles which are half red and half white. Once the values of $u(v)$ and $u(w)$ have been determined, based on how the black particles move, we can then place the pre-existing pink particles. We again have to ensure that the number of non-black particles on $v$ does not exceed $\chi(B'(v))$, and now there could be at most $u(v)$ red particles, so we restrict to placing at most $\chi(B'(v))-u(v)$ pink particles; this gives $u^P(v)$. There is a similar restriction on vertex $w$ and through this we obtain a lower bound $\ell^P(v)$ on the number of pink particles to place onto $v$. The role of $\theta(v)$ is to give the probability of placing the lower limits on $v$, with $1-\theta(v)$ then the probability of placing the upper limits on $v$. We choose $\theta(v)$ to satisfy
\begin{equation}
\begin{aligned}\label{e:thetadef}
&\theta(v)[\ell(v)+\frac12\ell^P(v)]+(1-\theta(v))[u(v)+\frac12 u^P(v)]\\&=(R(v)+\frac12 P(v))\mathrm{P}_{e,B,B'}(v,v)+(R(w)+\frac12 P(w))\mathrm{P}_{e,B,B'}(w,v)=:m^*(v).
\end{aligned}
\end{equation}
This particular choice of $\theta(v)$ is necessary to ensure that the expected amount of ink at a vertex (given numbers of black particles on the vertices) matches the probability that the marked particle in the MaBB process is on that vertex (given the location of non-marked particles), see Lemma~\ref{L:onestep}.

The following lemmas shows that such a $\theta(v)$ exists and give bounds on its value.

\begin{lemma}[Existence of $\theta(v)$]\label{L:thetaexist}
For every $e,v,w,B,B',R,P$, 
\[
\ell(v)+\frac12\ell^P(v)\le m^*(v)\le u(v)+\frac12 u^P(v),
\]
and so in particular there exists $\theta(v)\in[0,1]$ satisfying~\eqref{e:thetadef}.
\end{lemma}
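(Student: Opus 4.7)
The plan is to convert the question into a small two-variable linear program and then conclude via the Intermediate Value Theorem. Writing $p_{vv} := \mathrm{P}_{e,B,B'}(v,v)$ and $p_{wv} := \mathrm{P}_{e,B,B'}(w,v)$, I would set
\[
r := R(v)p_{vv} + R(w)p_{wv}, \qquad p := P(v)p_{vv} + P(w)p_{wv},
\]
so that $m^*(v) = r + \tfrac12 p$. The first key step is to identify three linear constraints on $(r,p)$: the trivial bounds $0 \le r \le R_{v,w}$ and $0 \le p \le P_{v,w}$ coming from $p_{vv}, p_{wv}\in [0,1]$, together with the capacity bound $r+p\le \chi(B'(v))$. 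This last bound comes from applying Lemma~\ref{L:colour} to the inequality $R(y)+P(y) \le \chi(B(y))$ for $y\in\{v,w\}$ (which holds because red, pink and white particles together fill the $\chi(B(y))$ non-black slots at $y$):
\[
r+p = (R(v)+P(v))p_{vv} + (R(w)+P(w))p_{wv} \le \chi(B(v))p_{vv} + \chi(B(w))p_{wv} = \chi(B'(v)).
\]

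The second step is to maximize $r+\tfrac12 p$ on this polytope by inspection of its vertices. A case split on whether $R_{v,w}\ge \chi(B'(v))$ and whether $R_{v,w}+P_{v,w}\le \chi(B'(v))$ will reveal that the maximum equals $u(v)+\tfrac12 u^P(v)$, attained at $(r,p)=(u(v),u^P(v))$. This yields the upper bound $m^*(v)\le u(v)+\tfrac12 u^P(v)$.

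For the lower bound I would exploit the symmetry $v\leftrightarrow w$ together with an \emph{ink conservation} identity. The same argument applied at $w$ gives $m^*(w)\le u(w)+\tfrac12 u^P(w)$, and from $p_{vv}+p_{vw}=1=p_{wv}+p_{ww}$ one obtains immediately $m^*(v)+m^*(w) = R_{v,w}+\tfrac12 P_{v,w}$. A short case check on the signs of $R_{v,w}-\chi(B'(w))$ and $P_{v,w}-\chi(B'(w))+u(w)$ yields the two identities $\ell(v)+u(w) = R_{v,w}$ and $\ell^P(v)+u^P(w) = P_{v,w}$; substituting these into the conservation identity gives $m^*(v)\ge \ell(v)+\tfrac12\ell^P(v)$.

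The existence of $\theta(v)\in[0,1]$ satisfying~\eqref{e:thetadef} then follows immediately, since the right-hand side of \eqref{e:thetadef} is a linear function of $\theta$ interpolating between $u(v)+\tfrac12 u^P(v)$ at $\theta=0$ and $\ell(v)+\tfrac12\ell^P(v)$ at $\theta=1$, and $m^*(v)$ lies in that interval. The main obstacle is matching the LP solution to the piecewise definitions of $u(v), u^P(v), \ell(v), \ell^P(v)$; this is elementary but the case work must be done carefully to ensure no cases are missed.
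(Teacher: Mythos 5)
Your proposal is correct, and it reaches the conclusion by a partly different route than the paper. For the upper bound the two arguments are essentially the same in content: both rest on the slot constraint $R(y)+P(y)\le\chi(B(y))$ combined with the identity $\chi(B(v))\mathrm{P}_{e,B,B'}(v,v)+\chi(B(w))\mathrm{P}_{e,B,B'}(w,v)=\chi(B'(v))$ from Lemma~\ref{L:colour}, and both end up splitting on whether $R_{v,w}$ exceeds $\chi(B'(v))$ and whether $P_{v,w}$ exceeds $\chi(B'(v))-R_{v,w}$; your LP packaging just organises that case split as a vertex inspection. Where you genuinely diverge is the lower bound: the paper proves $m^*(v)\ge\ell(v)+\frac12\ell^P(v)$ by a second, separate case analysis, whereas you deduce it from the upper bound applied at $w$ via the conservation identity $m^*(v)+m^*(w)=R_{v,w}+\frac12 P_{v,w}$ (which follows from $\mathrm{P}_{e,\xi,\xi'}(v,v)+\mathrm{P}_{e,\xi,\xi'}(v,w)=1$, i.e.\! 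Property~\ref{a:MaBBindep}) together with the complementarity identities $\ell(v)+u(w)=R_{v,w}$ and $\ell^P(v)+u^P(w)=P_{v,w}$. Those identities are immediate from $\{x-y\}\vee0=x-(x\wedge y)$ (the first is even recorded in the paper's definition of $u(w)$), so your ``short case check'' is genuinely short. This duality step roughly halves the casework and makes transparent why the lower limits at $v$ are exactly the complements of the upper limits at $w$; the paper's direct computation buys nothing extra here except that its intermediate inequalities are reused almost verbatim in the proof of Lemma~\ref{L:thetabounds}, where quantitative versions of both bounds are needed and the symmetry trick alone would not suffice.
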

\begin{lemma}[Bounds on $\theta(v)$]\label{L:thetabounds}
Fix $e=\{v,w\}$, $B$ and $\eta\in(0,1/2)$. If $B'$ satisfies
\[
\mathrm{P}_{e,B,B'}(v,v),\,\mathrm{P}_{e,B,B'}(w,w)\in[\eta,1-\eta],
\]
then $\theta(v)\in[\eta,1-\eta]$. 
\end{lemma}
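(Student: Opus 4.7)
The plan is to reduce the two-sided claim to a single inequality, and then, via case analysis, to rewrite $\theta(v)$ as a weighted combination of the transition probabilities $p_{xy}:=\mathrm{P}_{e,B,B'}(x,y)$ whose value is forced into $[\eta,1-\eta]$. Writing $I_x:=R(x)+\tfrac12 P(x)$ and $I:=I_v+I_w$, total-ink conservation gives $m^*(v)+m^*(w)=I$ together with $U(v)+L(w)=L(v)+U(w)=I$; these yield $U(v)-L(v)=U(w)-L(w)$ and $U(v)-m^*(v)=m^*(w)-L(w)$, and hence the identity $\theta(v)=1-\theta(w)$. Since the hypothesis and setup are symmetric under the swap $v\leftrightarrow w$, it suffices to prove the one-sided bound $\theta(v)\ge\eta$, equivalently $m^*(v)\le(1-\eta)U(v)+\eta L(v)$.

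The key algebraic step invokes Lemma~\ref{L:colour}. Abbreviating $c_x:=\chi(B(x))$ and $c_x':=\chi(B'(x))$, subtracting $m^*(v)=I_v p_{vv}+I_w p_{wv}$ from the identity $c_v p_{vv}+c_w p_{wv}=c_v'$ yields
\[
c_v'-m^*(v)=D_v p_{vv}+D_w p_{wv},\qquad D_x:=c_x-I_x=W(x)+\tfrac12P(x)\ge 0,
\]
and a symmetric identity $c_w'-m^*(w)=D_v p_{vw}+D_w p_{ww}$ holds. Thus each ``ink deficit'' is a manifestly non-negative combination of the four $p$'s.

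I would then split into cases based on which capacity constraints defining $u(v),\ell(v),u^P(v),\ell^P(v)$ are binding. In the \emph{unconstrained} case $R_{v,w}+P_{v,w}\le\min(c_v',c_w')$ one has $L(v)=0$ and $U(v)=I$, so
\[
\theta(v)=\frac{I-m^*(v)}{I}=\frac{I_v p_{vw}+I_w p_{ww}}{I_v+I_w},
\]
a convex combination of $p_{vw},p_{ww}\in[\eta,1-\eta]$. In the \emph{fully-saturated} case $R_{v,w}>\max(c_v',c_w')$, an explicit computation yields $L(v)=I-c_w'$ and $U(v)=c_v'$, hence $U(v)-L(v)=(c_v+c_w)-I=D_v+D_w$; combining this with the identity above gives
\[
\theta(v)=\frac{D_v p_{vv}+D_w p_{wv}}{D_v+D_w},
\]
again a convex combination of members of $[\eta,1-\eta]$.

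The remaining asymmetric cases (where exactly one of the two red-capacities binds, with pink subcases according to whether $R_{v,w}+P_{v,w}$ also exceeds some capacity) are more delicate because the simple convex-combination structure breaks. There the technique is to substitute the \emph{other} form of the identity---$c_v p_{vw}+c_w p_{ww}=c_w'$---into the denominator of $\theta(v)$, which after eliminating one of $p_{vw},p_{ww}$ renders $\theta(v)$ linear on the bounded interval determined by the hypothesis and the Lemma~\ref{L:colour} constraint; the bound is then verified at the two endpoints. The main obstacle is the sheer bookkeeping: the intermediate cases split further according to whether $R_{v,w}+P_{v,w}$ exceeds $c_v'$ or $c_w'$, producing several subcases with different explicit formulas for $L$ and $U$. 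I would expect this portion of the proof to be deferred to the appendix, consistent with the paper's stated approach to similar case-heavy arguments.
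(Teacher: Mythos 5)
Your skeleton is sound and in two respects genuinely nicer than the paper's argument. The symmetry identity $\theta(v)=1-\theta(w)$ is correct (one checks $m^*(v)+m^*(w)=R_{v,w}+\tfrac12P_{v,w}$ and $u(v)+\tfrac12u^P(v)+\ell(w)+\tfrac12\ell^P(w)=R_{v,w}+\tfrac12P_{v,w}$, together with its mirror), and it is \emph{not} used in the paper, which proves $\theta(v)\ge\eta$ and $1-\theta(v)\ge\eta$ separately in every case; your reduction legitimately halves the bookkeeping. The deficit identity $\chi(B'(v))-m^*(v)=D_v\mathrm{P}(v,v)+D_w\mathrm{P}(w,v)$ with $D_x:=\chi(B(x))-R(x)-\tfrac12P(x)=W(x)+\tfrac12P(x)\ge0$ is exactly the tool the paper extracts from Lemma~\ref{L:colour}, and your two worked cases (the paper's Case 2a and Case 1) are correct: in each, $\theta(v)$ is a convex combination of transition probabilities, all of which lie in $[\eta,1-\eta]$ by hypothesis and $\mathrm{P}(v,v)+\mathrm{P}(v,w)=\mathrm{P}(w,v)+\mathrm{P}(w,w)=1$.

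The gap is that the remaining cases --- the paper's Cases 2b--2d, 3a--3b, 4a--4b, which are the bulk of the appendix proof --- are only gestured at, and the strategy you describe for them (eliminate one probability, observe $\theta(v)$ is affine, check endpoints) is not self-evidently sufficient: in those cases the convex-combination structure genuinely fails and each case needs its own inequality drawn from that case's defining conditions. For instance, when $\chi(B'(v))\le R_{v,w}$ and $R_{v,w}+P_{v,w}\le\chi(B'(w))$ (the paper's Case 3b) one gets $u(v)+\tfrac12u^P(v)=\chi(B'(v))$ and $\ell(v)+\tfrac12\ell^P(v)=0$, so $\theta(v)=\bigl(D_v\mathrm{P}(v,v)+D_w\mathrm{P}(w,v)\bigr)/\chi(B'(v))$; the bound $\theta(v)\ge\eta$ then needs $D_v+D_w\ge\chi(B'(v))$, which is equivalent to $R_{v,w}+\tfrac12P_{v,w}\le\chi(B'(w))$ and so follows only from this case's hypotheses, while the bound $\theta(v)\le1-\eta$ comes from the unrelated estimate $m^*(v)\ge\eta(R_{v,w}+\tfrac12P_{v,w})\ge\eta\chi(B'(v))$ using $R_{v,w}\ge\chi(B'(v))$. (Some intermediate cases do still reduce to convex combinations --- e.g.\ in Case 2b one finds $u(v)+\tfrac12u^P(v)-m^*(v)=\tfrac12[W(v)\mathrm{P}(v,v)+W(w)\mathrm{P}(w,v)+R(v)\mathrm{P}(v,w)+R(w)\mathrm{P}(w,w)]$ over denominator $\tfrac12(\chi(B(v))+\chi(B(w))-P_{v,w})$ --- but this has to be discovered case by case.) Until each remaining case, or each case up to your $v\leftrightarrow w$ symmetry, is written out with its specific inequality, the proof is incomplete; the paper's proof consists precisely of that case-by-case verification.
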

The proofs of these lemmas involve lengthy (but straightforward) case analyses and can be found in Appendix~\ref{S:App}.

We now describe in full detail the dynamics of a single step of the chameleon process, including the role of $\theta(v)$. We show how this fits with the graphical construction in the next section. We assume that pairings of red and white particles have already happened (these happen at the beginning of each round, more details are provided on this in the next section on how this is achieved through ``label configurations''). 

As a preliminary step, we remove all non-black particles from the vertices of the ringing edge and place them into a pooled pile. They will be redistributed to the vertices during the steps described below. We update the black particles from $B$ to $B'$ according to the law of the movement of the non-marked particles in the MaBB (recall that the movement of non-marked particles does not depend on the location of the marked).

\underline{Step 1:} [Place lower bounds] \\
If there are no red particles on $v$ or $w$, skip straight to Step 4. Otherwise we proceed as follows. We introduce a notion of reserving paired particles in this step and put the lower bounds $\ell(v)$ and $\ell(w)$ of red particles onto vertices $v$ and $w$. In choosing red particles to use for the lower bounds, it is important to avoid as much as possible the paired red particles (i.e.\! those reds for which their paired white is also on the ringing edge) so that they can be reserved for the set of flexible reds, as only reds which are both flexible and paired can actually be pinkened. Thus, when choosing from the pooled pile for the $\ell(v)+\ell(w)$ reds for the lower bounds, we shall first choose the non-paired reds (and the specific ones chosen -- i.e.\! the vertex they started from at this update step and the label if they have one (see the next section for a discussion on when and how to label particles) -- is made uniformly). If there are insufficient non-paired reds, then once they are placed we choose from the paired reds (again uniformly).

\underline{Step 2:} [A fork in the road] \\
With probability $2[\theta(v)\wedge (1-\theta(v))]$
 proceed to Step 3a; otherwise skip Step 3a and proceed to Step 3b. 
 
 \underline{Step 3a:} [Create new pink particles] \\ 
 Let $k$ denote the number of paired red particles remaining in the pile after Step 1.  Select (uniformly) 
\begin{align}\label{e:cond}k\wedge\left\{\lceil [(|R|\wedge|W|)+|P|/2]/3\rceil-|P|/2\right\}\end{align}
paired red particles from the pile\footnote{By taking this minimum we ensure that the number of pink particles created won't exceed a certain threshold.}, where $|R|:=\sum_{v\in V}R(v)$, and similarly for $|W|$ and $|P|$ (where $W(v)$ denotes the number of white particles on $v$). These are coloured pink and placed onto $v$. The paired white particles of these selected red particles are also coloured pink and placed onto $w$. Any paired red and any non-paired red left in the pile are then each independently placed onto $v$ or $w$ equally likely. Now proceed to Step 4.

\underline{Step 3b:} [Place remaining red particles] \\If $\theta(v)<1/2$ put any remaining red particles from the pile onto $v$. As a result there will now be $u(v)$ red particles on $v$. If instead $\theta(v)\ge 1/2$, put any remaining red particles from the pile onto $w$ (and so there are $u(w)$ red particles on $w$.) Now proceed to Step 4.

\underline{Step 4:} [Place old pink particles]\\ There may be some pink particles remaining in the pool (which were already pink at the start of the update). If not, skip to Step 5; otherwise with probability $\theta(v)$, put $\ell^P(v)$ of these pink particles on $v$, and the rest (i.e.\! $u^P(w)$ of them) on $w$. With the remaining probability, instead put $u^P(v)$ of them on $v$ and the rest on $w$.

\underline{Step 5:} [Place white particles] \\The only possible particles left in the pile are white particles. These are placed onto $v$ and $w$ to ensure that the total number of non-black particles now on $v$ is $\chi(B'(v))$ (which also ensures there are $\chi(B'(w))$ non-black particles on $w$ since $\chi(B(v))+\chi(B(w))=\chi(B'(v))+\chi(B'(w))$ and no particles are created or destroyed). The choice of which white particles are put onto $v$ is done uniformly.

The next result shows the usefulness of reserving in guaranteeing a certain number of reserved pairs remain in the pool after Step 1. 

Write $R^p_{v,w}$ for the number of paired red particles on $e=\{v,w\}$, and set $R^q_{v,w}=R(v)+R(w)- R^p_{v,w}$. 

\begin{lemma}\label{L:pairres}
If there are $k$ paired red particles on ringing edge $e=\{v,w\}$ then the number that are left remaining in the pooled pile after Step 1 above is at least $k\wedge \chi(B'(v))\wedge \chi(B'(w))$. Further, on the event that $\chi(B'(v))/\chi(B'(w))\in[\gamma,1/\gamma]$ for some $\gamma\in(0,1)$, the probability any particular paired red particle remains in the pool after Step 1 is at least $\gamma$ uniformly over $B$, $R^q_{v,w}$, $R^p_{v,w}$ and $P$.
\end{lemma}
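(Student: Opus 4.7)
The plan is to reduce everything to a single counting identity together with one structural inequality coming from the pairing rule. Since Step~1 draws from the $R^q_{v,w}$ non-paired reds first and only dips into the paired reds when the non-paired supply is exhausted, the number of paired reds removed from the pool is exactly $(\ell(v)+\ell(w)-R^q_{v,w})_+$, leaving
\[
k-(\ell(v)+\ell(w)-R^q_{v,w})_+ \;=\; \min\!\bigl(k,\;R_{v,w}-\ell(v)-\ell(w)\bigr)
\]
paired reds in the pool. Moreover, whenever paired reds must be consumed they are chosen uniformly from the $k$ available, so the probability that any fixed paired red survives Step~1 equals this quantity divided by $k$. Both parts of the lemma therefore reduce to a lower bound on the flexible-red count $R_{v,w}-\ell(v)-\ell(w)$.

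The essential structural input --- and what I expect to be the main obstacle to identify --- is the inequality $k\le W_{v,w}$, forced by the pairing because each paired red on the edge is matched to a distinct white particle also on the edge. Combining with $R_{v,w}+P_{v,w}+W_{v,w}=\chi(B(v))+\chi(B(w))=\chi(B'(v))+\chi(B'(w))$ this gives
\[
k+R_{v,w}+P_{v,w}\;\le\;\chi(B'(v))+\chi(B'(w)).
\]
I would record this as a short preliminary observation; without it one could engineer configurations with many paired reds but few whites on the edge in which essentially all paired reds are consumed in Step~1, violating the claimed bound.

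With the inequality in hand, the rest is a routine case split on the signs of $\ell(v)$ and $\ell(w)$. If $\ell(v)=\ell(w)=0$, then $R_{v,w}-\ell(v)-\ell(w)=R_{v,w}\ge R^p_{v,w}=k$, so all $k$ paired reds stay in the pool and each survives with probability $1$. If both $\ell(v),\ell(w)>0$ (i.e.\! $R_{v,w}>\chi(B'(v))\vee\chi(B'(w))$), then $R_{v,w}-\ell(v)-\ell(w)=\chi(B'(v))+\chi(B'(w))-R_{v,w}\ge k+P_{v,w}\ge k$ by the displayed inequality, so again all $k$ paired reds survive. The only genuinely asymmetric case is $\ell(v)=0<\ell(w)$ (the mirror case is identical); there $R_{v,w}-\ell(v)-\ell(w)=\chi(B'(v))$, which directly yields part~(i). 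For part~(ii) in this case, one notes that $k\le W_{v,w}<\chi(B'(w))$ (since $R_{v,w}>\chi(B'(v))$ forces $W_{v,w}+P_{v,w}<\chi(B'(w))$), so if $k\le\chi(B'(v))$ the surviving probability is $1$, and otherwise it equals $\chi(B'(v))/k\ge\chi(B'(v))/\chi(B'(w))\ge\gamma$ by the hypothesis.
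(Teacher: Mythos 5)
Your proof is correct and follows essentially the same route as the paper's: the same counting of how many paired reds are consumed by the lower bounds $\ell(v)+\ell(w)$ after the non-paired reds are exhausted, the same key structural inequality $R^p_{v,w}\le W_{v,w}$ (each paired red on the edge forces its paired white onto the edge, giving $R_{v,w}+R^p_{v,w}\le\chi(B'(v))+\chi(B'(w))$), and the same case split according to which of $\ell(v),\ell(w)$ vanish. The only cosmetic difference is in the asymmetric case, where you bound the survival probability via $k<\chi(B'(w))$ while the paper uses $\chi(B'(v))+\chi(B'(w))\ge 2R^p_{v,w}$; both yield the factor $\gamma$.
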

We defer the proof to Appendix~\ref{S:App}.

The next result gives the expected amount of ink after one step of using this algorithm. We state the result in terms of the first update, given any initial conditions. Recall $m^*(v)$ is defined in~\eqref{e:thetadef}.

\begin{lemma}\label{L:onestep}For any $v,w\in V$, $B,R,P$ initial configurations of black, red and pink particles, and $B'$ the configuration of black particles just after the first update (at time $\tau_1$),
\[\mathds{E}[\ink_{\tau_1}(v)\mid B, B', R, P, \{e_1=\{v,w\}\}]=m^*(v).\]
\end{lemma}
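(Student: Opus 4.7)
The plan is a direct computation of the conditional expectation by tracking each step of the chameleon algorithm and using linearity of expectation. Write $\ink_{\tau_1}(v)=R_1(v)+\tfrac12 P_1(v)$, where $R_1(v)$ and $P_1(v)$ denote the post-update counts of red and pink particles on $v$, and decompose $P_1(v)=P_1^{\mathrm{new}}(v)+P_1^{\mathrm{old}}(v)$ into new pinks created at Step~3a and old pinks placed at Step~4. By the defining equation~\eqref{e:thetadef} of $\theta(v)$, it suffices to show that the conditional expectation equals $\theta(v)[\ell(v)+\tfrac12\ell^P(v)]+(1-\theta(v))[u(v)+\tfrac12 u^P(v)]$.

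First I would dispose of the degenerate case $R(v)=R(w)=0$: the opening clause of Step~1 sends the algorithm directly to Step~4, so $R_1(v)=P_1^{\mathrm{new}}(v)=0$, and Step~4 places $\ell^P(v)$ old pinks on $v$ with probability $\theta(v)$ and $u^P(v)$ otherwise. Since $\ell(v)=u(v)=0$ here, the identity is immediate.

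For the main case $R_{v,w}\ge1$, I would tabulate contributions to $v$ step by step. Step~1 deterministically places $\ell(v)$ reds on $v$. Step~2 routes to Step~3a with probability $q:=2[\theta(v)\wedge(1-\theta(v))]$ and to Step~3b with the complementary probability. Let $k$ be the (random) number of paired reds remaining in the pool after Step~1 and $K$ the number pinkened in Step~3a; then $K$ new pinks land on $v$, while the remaining $u(v)-\ell(v)-K$ flexible reds (both the $k-K$ unpinkened paired reds and the $u(v)-\ell(v)-k$ non-paired ones) are each sent to $v$ independently with probability $1/2$. In Step~3b, $v$ receives all $u(v)-\ell(v)$ flexible reds if $\theta(v)<1/2$ and none if $\theta(v)\ge1/2$. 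Independently, Step~4 contributes $\theta(v)\ell^P(v)+(1-\theta(v))u^P(v)$ in expectation to old pinks on $v$.

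The key observation is that the expected ink contribution of Step~3a from the pool is $\tfrac12\mathds{E}[u(v)-\ell(v)-K]+\tfrac12\mathds{E}[K]=\tfrac12(u(v)-\ell(v))$, so the random quantity $K$ drops out of the ink calculation. Splitting by the sign of $\theta(v)-1/2$ so that $q$ equals $2\theta(v)$ or $2(1-\theta(v))$ respectively, and combining the Step~1, Step~2--3, and Step~4 expectations, a short line of algebra yields
\[
\mathds{E}[\ink_{\tau_1}(v)\mid B,B',R,P,e_1=\{v,w\}]=\theta(v)\bigl[\ell(v)+\tfrac12\ell^P(v)\bigr]+(1-\theta(v))\bigl[u(v)+\tfrac12 u^P(v)\bigr]=m^*(v).
\]
The main subtlety is the cancellation of $K$, which is exactly why Step~2 uses the switching probability $2[\theta(v)\wedge(1-\theta(v))]$, and uses that the $k-K$ leftover paired reds and the non-paired flexible reds are distributed identically uniformly on $\{v,w\}$ in Step~3a; everything else is bookkeeping.
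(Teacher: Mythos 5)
Your proposal is correct and follows essentially the same route as the paper's proof: a step-by-step accounting of the expected ink contribution to $v$ (Step 1 gives $\ell(v)$, Steps 2--3 give $2[\theta(v)\wedge(1-\theta(v))]\cdot\frac{u(v)-\ell(v)}{2}$ plus the Step 3b term, Step 4 gives $\theta(v)\frac{\ell^P(v)}{2}+(1-\theta(v))\frac{u^P(v)}{2}$), followed by the case split on the sign of $\theta(v)-\tfrac12$ and the defining equation~\eqref{e:thetadef}. Your observation that the random pinkening count $K$ cancels is exactly the paper's per-particle remark that every red left in the pool contributes $\tfrac12$ in expectation whether it is pinkened or not, so the two write-ups differ only in bookkeeping style.
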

\begin{proof}
Recall that each red particle contributes 1 to the ink value of the vertex it occupies, and each pink particle contributes $1/2$. 

We first consider the contribution to $\ink_{\tau_1}(v)$ which comes from the particles placed onto $v$ in Step 1. This is straightforward: we place $\ell(v)$ particles onto $v$ from the pile and these are all red, thus the contribution to $\ink_{\tau_1}(v)$ from Step 1 is simply $\ell(v)$. 

At Step 2 we do not place any new particles onto the vertices, but we do decide whether to proceed with Step 3a or Step 3b. If we do Step 3a then each red particle (paired or otherwise) in the pool will in expectation contribute a value of 1/2 to $\ink_{\tau_1}(v)$: either it gets coloured pink as does its paired white and one of them is placed onto $v$, or it stays red and is placed onto $v$ with probability 1/2. If we do Step 3b and $\theta(v)<1/2$ then we place the remaining red particles on $v$ which gives a total of $u(v)$ red on $v$. If instead $\theta(v)\ge 1/2$, we do not place any more red particles on $v$.

Finally at Step 4 we place the pre-existing pink particles, each contributing $1/2$ to the ink of the vertex they are placed on.

Putting these observations together we obtain
\begin{align*}
&\mathds{E}[\ink_{\tau_1}(v)\mid B,B',R,P,\{e_1=\{v,w\}\}]\\&=\ell(v)+2[\theta(v)\wedge(1-\theta(v))]\frac{u(v)-\ell(v)}{2}+(1-2[\theta(v)\wedge(1-\theta(v))])\indic{\theta(v)<1/2}(u(v)-\ell(v))\\
&\phantom{=}+\theta(v)\frac{\ell^P(v)}{2}+(1-\theta(v))\frac{u^P(v)}{2}\\
&=\ell(v)+\indic{\theta(v)<1/2}\big\{\theta(v)(u(v)-\ell(v))+(1-2\theta(v))(u(v)-\ell(v))\big\}\\&\phantom{=}+\indic{\theta(v)\ge1/2}\big\{(1-\theta(v))(u(v)-\ell(v))\big\}+\theta(v)\frac{\ell^P(v)}{2}+(1-\theta(v))\frac{u^P(v)}{2}\\
&=\ell(v)+(1-\theta(v))(u(v)-\ell(v))+\theta(v)\frac{\ell^P(v)}{2}+(1-\theta(v))\frac{u^P(v)}{2}\\
&=\theta(v)\left(\ell(v)+\frac{\ell^P(v)}{2}\right)+(1-\theta(v))\left(u(v)+\frac{u^P(v)}{2}\right)\\
&=m^*(v).\qedhere
\end{align*}
\end{proof}

\subsection{The evolution of the chameleon process}
We define a ``particle configuration'' to be a function $V\to\mathds{N}_0$, which, in practice, will be the configuration of red, black, pink or white particles.  For $S$ a particle configuration we define $|S|:=\sum_{v\in V}S(v)$. We also define a ``label configuration'' to be a function $[a(m-1)+bn]\to V\cup\{0\}$, which will give the vertex occupied by the labelled particle of a certain colour (and which has value 0 if there is no particle of a given label). We discuss further this labelling now.

At the start of every round we shall pair some red particles with an equal number of white particles. The way we do this, and how we track the movement of the paired particles, is by labelling paired red and white particles with a unique number. 
Suppose there are $r$ red particles at the start of the $\ell$th round, and this is less than the number of white particles (otherwise, reverse roles of red and white in the following). We label the red particles with labels $1,\ldots, r$ such that for any pair of vertices $v$ and $w$, the label of any red particle on vertex $v$ is less than the label of any red particle on vertex $w$ if and only if $v<w$.  In other words, we label red particles on vertex $1$ first, then label red particles on $2$, and continue until we have labelled all $r$ red particles.  We similarly label $r$ white particles with the (same) rule that for any pair of vertices $v$ and $w$, the label of any white particle on vertex $v$ is less than the label of any white particle on vertex $w$ if and only if $v<w$. A labelled red particle and a labelled white particle are pairs if they have the same label. 

For every time, we will have two label configurations: one for the red particles and one for the white. Suppose $L$ is such a label configuration for the red particles at a certain time. Then the number of labelled red particles at this time is equal to \[\max\{ i: \,1\le i\le a(m-1)+bn, L(i)\neq0\},\] so in particular,  $L(i)=0$ for any $i$ larger than the number of labelled red particles.

There are several aspects of the update rule which require external randomness: in Step 1, to choose which particles make up the lower bounds, in Step 2 to determine whether we proceed with Step 3a or Step 3b, in Step 3a choosing which paired red particles to pinken and how to place the remaining red particles in the pile, in Step 4 how to place the old pink particles, and in Step 5 to place the white particles. To fit the chameleon process into the framework of the graphical construction, we shall use random variables $\{U^c_i\}_{i\ge1}$ as the source of the needed randomness with $U^c_i$ used at time $\tau_i$ (and we shall not make it explicit \emph{how} this is done). Further, and importantly, we shall do this in a way such that the randomness used at Step 1 is independent of the randomness used at Step 2 (it is standard that this is possible, see for example~\citet[Section 4.6]{williams1991probability}).

The random variables $\{U^b_i\}_{i\ge1}$ are used to determine how the black particles move so that they move in the same way as the non-marked particles in the MaBB.

For independent uniforms $U$, $U'$ on $[0,1]$, an edge $e$, particle configurations $B$ of black particles, $P$ of pink particles, and $R$ of red particles, and label configurations $L^R$ for red particles, $L^W$ for white particles, we define $\mathrm{C}(U,U',e,B,R,P,L^R,L^W)$ to be a quintuple with the first component equal to $\mathrm{MaBB}(U,e,B)$, the second (resp.\! third) component denoting the configuration of red (resp.\! pink) particles, and the fourth (resp.\! fifth) component denoting the label configuration of red (resp.\! white) particle just after edge $e$ rings if before this edge rang the configuration of black, red and pink was given by $B$, $R$ and $P$, the label configuration of red particles was $L^R$, and of white was $L^W$, and we use $U'$ as the source of randomness for Steps 1--5 as described above (in practice we shall take $U'$ to be $U^c_i$ for some $i\ge1$).

\begin{defn}[Chameleon process]\label{d:cham}
The chameleon process with round length $T>0$ and associated with a MaBB initialised at $(\xi,x)$ is the quintuple $(B_t^\mathrm{C},R_t^\mathrm{C},P_t^\mathrm{C},L^R_t,L^W_t)_{t\ge0}$ where $B_t^\mathrm{C},R_t^\mathrm{C}$ and $P_t^\mathrm{C}$ are particle configurations and $L^R_t$, $L^W_t$ are label configurations for each $t\ge0$, with the following properties:
\begin{enumerate}
\item (Initial values) $B_0^\mathrm{C}(v)=\xi(v)$, $R_0^\mathrm{C}(v)=\chi(\xi(x))\delta_{x}(v)$, and $P_0^\mathrm{C}(v)=0$, for all $v\in V$,
\[L_0^R(i)=\begin{cases}x &\mbox{for }1\le i\le N_0:=\chi(\xi(x))\wedge[a(m-1)+bn-\chi(\xi(x))],\\0&\mbox{otherwise,}\end{cases}\]
and
\[
L_0^W(i)=\begin{cases}\min\Big\{\ell\in\{1,\ldots,n\}\setminus\{x\}:\,\sum_{\substack{k\in[\ell]:\\k\neq x}}\chi(\xi(k))\ge i\Big\} &\mbox{for }1\le i\le N_0,\\0&\mbox{otherwise.}\end{cases}
\] 
\item (Updates during rounds) For each $i\ge1$,
\begin{align*}
(B_{\tau_i}^\mathrm{C},R_{\tau_i}^\mathrm{C},P_{\tau_i}^\mathrm{C},L_{\tau_i}^R,L_{\tau_i}^W)&=\mathrm{C}(U_i^b,U_i^c,e_i,B_{\tau_{i}-}^\mathrm{C},R_{\tau_{i}-}^\mathrm{C},P_{\tau_{i}-}^\mathrm{C}).
\end{align*}
\item (Particle configuration updates at end of rounds) For each $i\ge1$ such that \begin{align}\label{e:icond}\sum_{v\in V}P_{iT-}^\mathrm{C}(v)\ge\min\left\{\sum_{v\in V}R_{iT-}^\mathrm{C}(v),\sum_{v\in V}\left(\chi(B_{iT-}^\mathrm{C}(v))-R_{iT-}^\mathrm{C}(v)-P_{iT-}^\mathrm{C}(v)\right)\right\},\end{align}
we set 
\[
B_{iT}^\mathrm{C}(v)=B_{iT-}^\mathrm{C}(v),\,\, R_{iT}^\mathrm{C}(v)=R_{iT-}^\mathrm{C}(v)+d_iP_{iT-}^\mathrm{C}(v),\,\, P_{iT}^\mathrm{C}(v)=0\quad\text{for all }v\in V;
\]and if $i$ does not satisfy~\eqref{e:icond} then we set 
\[
B_{iT}^\mathrm{C}(v)=B_{iT-}^\mathrm{C}(v),\,\, R_{iT}^\mathrm{C}(v)=R_{iT-}^\mathrm{C}(v),\,\, P_{iT}^\mathrm{C}(v)=P_{iT-}^\mathrm{C}(v)\quad\text{for all }v\in V.
\]
\item (Label configuration updates at end of rounds) For each $i\ge1$ we define \[N_i:=\sum_v R_{iT}^\mathrm{C}(v)\wedge\Big[a(m-1)+bn-\sum_v \left(R_{iT}^\mathrm{C}(v)+P_{iT}^\mathrm{C}(v)\right)\Big]\] and set
\begin{align*}L_{iT}^R(j)&=\begin{cases}\min\Big\{\ell\in[n]:\,\sum_{k=1}^\ell R^\mathrm{C}_{iT}(k)\ge j\Big\}&\mbox{for }1\le j\le N_i,\\
0&\mbox{otherwise,}\end{cases}\\ L_{iT}^W(j)&=\begin{cases}\min\Big\{\ell\in[n]:\,\sum_{k=1}^\ell \left(\chi(B^\mathrm{C}_{iT}(k))-R^\mathrm{C}_{iT}(k)-P^\mathrm{C}_{iT}(k)\right)\ge j\Big\}&\mbox{for }1\le j\le N_i,\\0&\mbox{otherwise.}\end{cases}\end{align*}
\end{enumerate}
\end{defn}
We can obtain the number of white particles $W_t^\mathrm{C}(v)$ at time $t$ on a vertex $v$ using $W_t^\mathrm{C}(v)+R_t^\mathrm{C}(v)+P_t^\mathrm{C}(v)=\chi(B_t^\mathrm{C}(v))$.

We write $\mathcal{C}(m)$ for the space of possible configurations of the chameleon process in which the underlying MaBB has $m-1$ non-marked particles.

We note from this definition that the process also updates at the ends of rounds, i.e.\! at times of the form $iT$ for $i\ge1$. At these times if the number of pink particles is at least the number of red or white particles (i.e.\! if \eqref{e:icond} holds), then we have a \emph{depinking} (and call this time a \emph{depinking time}) in which all pink particles are removed from the system. To do this, we use the coin flips $d_i$ given in the graphical construction. If time $iT$ is a depinking time then we re-colour all pink particles red simultaneously if $d_i=1$, otherwise if $d_i=0$ we re-colour them all white.

A simulation of the chameleon process for the first few update times appears in Appendix~\ref{A:sim}.

\section{Properties of the chameleon process}\label{S:champrops}
\subsection{Evolution of ink}\label{SS:evol}
In this section we suppose that the chameleon process considered is associated with a MaBB initialised at $(\xi,x)$.
\begin{lemma}\label{L:inkonly}
The total ink in the system only changes at depinking times.
\end{lemma}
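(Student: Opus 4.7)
The plan is to verify, by going through every mechanism by which the chameleon configuration can change, that the total ink $I_t := \sum_{v\in V}\ink_t(v) = |R_t^{\mathrm{C}}| + \tfrac12 |P_t^{\mathrm{C}}|$ is invariant except possibly at the depinking times $iT$ at which condition~\eqref{e:icond} holds. By Definition~\ref{d:cham}, the configuration can change only at (i) an edge-ring time $\tau_i$, (ii) an end-of-round time $iT$ at which~\eqref{e:icond} holds, or (iii) an end-of-round time $iT$ at which~\eqref{e:icond} fails. We need to show $I$ jumps only in case (ii).

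First, I would dispatch case (iii) immediately: by the second branch of item 3 in Definition~\ref{d:cham}, the particle configurations $B^{\mathrm{C}}$, $R^{\mathrm{C}}$, $P^{\mathrm{C}}$ are all left unchanged; item 4 only re-assigns the label configurations $L^R, L^W$, which does not alter any particle's colour. Hence $I_{iT} = I_{iT-}$.

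Next, for case (i), I would walk through the five-step update rule of a single edge-ring at $\tau_i$ with ringing edge $e=\{v,w\}$. The key point is that ink depends only on the multiset of colours of the non-black particles, not on which vertex they sit on, so any step that merely redistributes pooled particles between $v$ and $w$ cannot change $I$. Step~1 takes red particles from the pool and places them on $v,w$ (still red); Step~3b places the leftover red particles (still red); Step~4 redistributes the pre-existing pink particles; Step~5 redistributes the remaining white particles. None of these alter any particle's colour, so none alter $I$. The only recolouring happens in Step~3a, where some number $k^{*}$ (the quantity from \eqref{e:cond}) of paired red particles and their $k^{*}$ paired white partners are simultaneously recoloured pink. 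This changes $|R|$ by $-k^{*}$, $|P|$ by $+2k^{*}$, so the ink change is $-k^{*} + \tfrac12(2k^{*}) = 0$. Thus $I_{\tau_i} = I_{\tau_i-}$.

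Case (ii) then requires no further work for this lemma, but I would note (as a sanity check) that at a depinking time the first branch of item~3 of Definition~\ref{d:cham} either turns all $|P|$ pink particles red (when $d_i=1$, giving $\Delta I = +\tfrac12|P|$) or all white (when $d_i=0$, giving $\Delta I = -\tfrac12|P|$), which is precisely when $I$ can jump. The main obstacle, such as it is, is purely bookkeeping: one must be careful in Step~3a that exactly as many whites as reds are recoloured (which is built into the pairing mechanism), and that Step~1 never changes the colour of a particle — both are immediate from the construction.
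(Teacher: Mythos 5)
Your proposal is correct and is essentially the paper's own argument, only written out in more detail: the paper's proof simply observes that at a non-depinking update the only recolouring is of a paired red and its paired white both becoming pink, which leaves total ink unchanged since each contributes $1/2$ afterwards. Your case-by-case bookkeeping (including the check that the non-recolouring steps and the non-depinking end-of-round updates leave ink fixed) is a valid, slightly more explicit version of the same observation.
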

\begin{proof}
This is a straightforward observation as the only particles that change colour at an update time that is not a depinking are paired red and white particles. But since we colour each in the pair pink, the total ink does not change.
\end{proof}
Let $\widehat{\ink}_j$ denote the ink in the system just after the $j$th depinking time and $D_j$ the time of the $j$th depinking. The process $\{\widehat{\ink}_j\}_{j\ge1}$ evolves as a Markov chain; the following result gives its transition probabilities. This result is similar to \citet[Proposition 7.3]{olive} for the chameleon process used there.
\begin{lemma}\label{L:inkmc}
For $j\in\mathds{N}$, $\widehat{\ink}_{j+1}\in\{\widehat{\ink}_j-\Delta(\widehat{\ink}_j),\widehat{\ink}_j+\Delta(\widehat{\ink}_j)\}$ a.s., where for each $r\in\mathds{N}$,
\[
\Delta(r):=\left\lceil\frac{\min\{r,a(m-1)+bn-r\}}{3}\right\rceil.
\]
Moreover, conditionally on $\{\widehat{\ink}_\ell\}_{\ell=0}^j$, each possibility has probability $1/2$.
\end{lemma}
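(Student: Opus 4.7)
By Lemma~\ref{L:inkonly} the ink is constant between consecutive depinking times. Writing $D_j$ for the time of the $j$th depinking, this gives $|R^\mathrm{C}_t|+|P^\mathrm{C}_t|/2 = \widehat{\ink}_j$ throughout $t\in[D_j,D_{j+1})$. At the depinking time $D_{j+1}=iT$, the rule re-colours every pink particle red if $d_i=1$ and white if $d_i=0$; in both cases the ink changes by exactly $\pm|P^\mathrm{C}_{D_{j+1}-}|/2$. Since each $d_i$ is Bernoulli$(1/2)$ and independent of the rest of the graphical construction (and hence of the chameleon configuration at time $D_{j+1}-$ and of $\{\widehat{\ink}_\ell\}_{\ell=0}^j$), each sign occurs with conditional probability $1/2$. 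The whole lemma therefore reduces to showing that $|P^\mathrm{C}_{D_{j+1}-}|/2=\Delta(\widehat{\ink}_j)$ almost surely.

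To pin down the pink count at the depinking moment, introduce the auxiliary quantity $Q_t := |R^\mathrm{C}_t|\wedge|W^\mathrm{C}_t|+|P^\mathrm{C}_t|/2$ and observe that it is preserved by every single update. Inspecting Steps~1--5, the only step in which any particle changes colour is Step~3a, where some integer number $c$ of paired reds and their matching $c$ paired whites are simultaneously recoloured pink; under this change $|R^\mathrm{C}|$ and $|W^\mathrm{C}|$ each decrease by $c$ while $|P^\mathrm{C}|$ increases by $2c$, leaving $|R^\mathrm{C}|\wedge|W^\mathrm{C}|+|P^\mathrm{C}|/2$ invariant. Steps~1, 3b, 4 and 5 merely redistribute already-coloured particles across the two vertices of the ringing edge and so leave $|R^\mathrm{C}|,|W^\mathrm{C}|,|P^\mathrm{C}|$ unchanged. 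Since $|P^\mathrm{C}_{D_j}|=0$, we read off $Q_{D_j}=\min\{\widehat{\ink}_j,\,a(m-1)+bn-\widehat{\ink}_j\}=:M_j$, hence $Q_t\equiv M_j$ on $[D_j,D_{j+1})$.

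The matching upper and lower bounds on $|P^\mathrm{C}|/2$ at $D_{j+1}-$ now follow quickly. For the upper bound, the explicit cap in Step~3a restricts each increment of $|P^\mathrm{C}|/2$ to at most $\lceil(|R^\mathrm{C}|\wedge|W^\mathrm{C}|+|P^\mathrm{C}|/2)/3\rceil-|P^\mathrm{C}|/2 = \lceil M_j/3\rceil-|P^\mathrm{C}|/2$, so by induction over updates $|P^\mathrm{C}_t|/2 \le \lceil M_j/3\rceil=\Delta(\widehat{\ink}_j)$ throughout the round. For the lower bound, note $|P^\mathrm{C}|$ is always even (it is $0$ at $D_j$ and changes only by the even increments of Step~3a), so $|P^\mathrm{C}|/2\in\mathds{N}_0$; the depinking condition~\eqref{e:icond} at $D_{j+1}=iT$ becomes $|P^\mathrm{C}_{D_{j+1}-}| \ge |R^\mathrm{C}_{D_{j+1}-}|\wedge|W^\mathrm{C}_{D_{j+1}-}| = M_j-|P^\mathrm{C}_{D_{j+1}-}|/2$, which rearranges to $|P^\mathrm{C}_{D_{j+1}-}|/2 \ge M_j/3$ and by integrality to $|P^\mathrm{C}_{D_{j+1}-}|/2 \ge \lceil M_j/3\rceil$. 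Combining the two bounds gives $|P^\mathrm{C}_{D_{j+1}-}|/2 = \Delta(\widehat{\ink}_j)$, completing the proof. The only step requiring any care is the conservation of $Q_t$, which collapses to the simple design feature that the sole colour-changing event produces one pink from a red and one pink from its paired white.
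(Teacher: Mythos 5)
Your proof is correct and follows essentially the same route as the paper: both arguments rest on the conservation of $|R^\mathrm{C}|\wedge|W^\mathrm{C}|+|P^\mathrm{C}|/2$ across updates (which the paper encodes via the identity $p+2(\lceil(q+p/2)/3\rceil-p/2)=2\lceil(q+p/2)/3\rceil$), use the Step 3a cap together with the depinking condition~\eqref{e:icond} to pin the pink count at $D_{j+1}-$ to exactly $2\Delta(\widehat{\ink}_j)$, and then invoke the independent fair coin $d_i$ for the $\pm$ sign. Your explicit invariant $Q_t$ with separate upper and lower bounds is just a cleaner packaging of the paper's computation.
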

\begin{proof}
Fix $j\in\mathds{N}$. After each depinking is performed there are no pink left in the system, and so $\widehat{\ink}_j$ is equal to the number of red particles at time $D_j$, $|R_{D_j}^\mathrm{C}|=\sum_vR_{D_j}^\mathrm{C}(v)$. As the number of non-black particles is fixed at $a(m-1)+bn$, it follows that the number of white particles at time $D_j$ is $|W_{D_j}^\mathrm{C}|=\sum_vW_{D_j}^\mathrm{C}(v)=a(m-1)+bn-\widehat{\ink}_j$.

Observe that every time a red and white particle pair are pinkened, we lose one red and one white, and gain two pink particles.

It can be easily checked that for $p$ and $q$ positive integers with $p$ even, \[p<q\Leftrightarrow \lceil (q+p/2)/3\rceil-p/2>0.\]
In other words, while the number of pink particles remains less than the minimum of the number of red and white, the chameleon process will still create new pink particles (recall the number of pink particles created in Step 3a of the chameleon process); conversely, the chameleon process will stop producing new pink particles as soon as the number of pink particles is at least the minimum of the number of red and white particles. Moreover, once it stops producing new pink particles, the number of pink created is the smallest number which ensures that the number of pink is at least the number of red or white; we can see this by observing that 
\[
p+2\left(\lceil (q+p/2)/3\rceil-p/2\right)=2\lceil (q+p/2)/3\rceil
\]
is the smallest even integer which is at least
\[
q-\left(\lceil (q+p/2)/3\rceil-p/2\right)=(q+p/2)-\lceil (q+p/2)/3\rceil.
\]

Thus the number of pink particles created just before the next depinking time (at time $D_{j+1})$ is the smallest $p$ even satisfying $p\ge |W_{D_j}^\mathrm{C}|-p/2$ or $p\ge |R_{D_j}^\mathrm{C}|-p/2$, which is $p=2\Delta(\widehat{\ink}_j)$.

At the depinking time $D_{j+1}$, the pink particles either all become white (and $\widehat{\ink}_{j+1}=\widehat{\ink}_j-\Delta(\widehat{\ink}_j)$) or they all become red (and $\widehat{\ink}_{j+1}=\widehat{\ink}_j+\Delta(\widehat{\ink}_j)$). Which event happens depends just on the outcome of the independent fair coin flip $d_{j+1}$.
\end{proof}
\begin{lemma}\label{L:inkmart}The total ink in the system is a martingale and is absorbed in finite time in either 0 or $a(m-1)+bn$. Further, the event \[
\mathrm{Fill}:=\left\{\lim_{t\to\infty}\ink_t=a(m-1)+bn\right\}
\]
has probability $\chi(\xi(x))/(a(m-1)+bn)$.
\end{lemma}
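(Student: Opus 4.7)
The plan is to combine Lemmas~\ref{L:inkonly} and~\ref{L:inkmc} with the optional stopping theorem. First, by Lemma~\ref{L:inkonly} the process $(\ink_t)_{t\ge 0}$ is piecewise constant, jumping only at the depinking times $D_1,D_2,\ldots$, and by Lemma~\ref{L:inkmc} the jump at $D_{j+1}$ equals $\pm\Delta(\widehat{\ink}_j)$ with conditional probability $1/2$ each given the past. Combining these gives $\mathds{E}[\ink_{t+h}\mid\mathcal{F}_t]=\ink_t$, so $(\ink_t)$ is a martingale taking values in the bounded set $\{0,1,\ldots,a(m-1)+bn\}$.

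Next I would handle absorption. Observing that $\Delta(r)=\lceil\min\{r,a(m-1)+bn-r\}/3\rceil$ vanishes exactly when $r\in\{0,a(m-1)+bn\}$ and is at least $1$ for all other values of $r$, the embedded chain $(\widehat{\ink}_j)_{j\ge 0}$ strictly jumps from every non-absorbing state. Since the state space is finite and from any non-absorbing state there is a uniformly bounded below probability of reaching an absorbing state in at most $a(m-1)+bn$ steps (via a consecutive run of $+\Delta$ or $-\Delta$ moves), the number of depinkings $J:=\inf\{j:\widehat{\ink}_j\in\{0,a(m-1)+bn\}\}$ is a.s.\ finite (and has finite expectation). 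To lift this from the $j$-indexing to real time we need to show that $D_J<\infty$ a.s., i.e.\ that depinkings continue to occur at a positive rate until the absorbing set is hit; this uses that whenever both red and white particles are present, condition~\eqref{e:icond} is eventually triggered within finitely many rounds.

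Finally I would apply the optional stopping theorem to the bounded martingale $(\ink_t)$ at the a.s.\ finite stopping time $D_J$, obtaining $\mathds{E}[\ink_{D_J}]=\ink_0$. From the initial values in Definition~\ref{d:cham}, vertex $x$ starts with $\chi(\xi(x))$ red particles and every other vertex with only white non-black particles, so $\ink_0=\chi(\xi(x))$. Since $\ink_{D_J}\in\{0,a(m-1)+bn\}$, rearranging gives
\[
\mathds{P}(\mathrm{Fill})=\mathds{P}(\ink_{D_J}=a(m-1)+bn)=\frac{\chi(\xi(x))}{a(m-1)+bn}.
\]

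The main obstacle will be rigorously verifying absorption in \emph{finite real time} rather than merely in finitely many depinking steps. This requires arguing that between consecutive rounds of the chameleon process, when both red and white particles are present, enough pink particles are produced to eventually satisfy condition~\eqref{e:icond}. The mechanism relies on paired red and white particles meeting on ringing edges within a round, and so is closely tied to the eventual choice of round length $T$ on the order of $\max_{i,j}\mathds{E}\hat M_{i,j}(G)$; once this is in place the rest of the proof is a direct application of optional stopping.
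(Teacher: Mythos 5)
Your argument is correct and takes essentially the same route as the paper: both rest on the bounded-martingale property supplied by Lemmas~\ref{L:inkonly} and~\ref{L:inkmc}, the paper extracting $\mathds{P}(\mathrm{Fill})$ via dominated convergence on the almost-sure limit of the bounded martingale rather than optional stopping at $D_J$ --- an immaterial difference. The finite-real-time absorption point you flag as the main obstacle is precisely the step the paper also does not argue in place (it defers to the proof of Lemma~7.1 of \cite{olive}), and your sketch of closing it via the round-length/meeting-time mechanism is the right one (it is made rigorous later in Lemma~\ref{L:depinkexp}).
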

\begin{proof}
The fact that total ink is a martingale follows from Lemma~\ref{L:inkmc} and the behaviour of the chameleon process at depinking times. The probability of event Fill then follows by the martingale property and the dominated convergence theorem (total ink is bounded by $a(m-1)+bn$), as in the proof of Lemma 7.1 of \cite{olive}.
\end{proof}
\begin{corollary}\label{cor:fill}
For $\zeta\in \Omega_{G,m-1}$ and $t\ge0$,
\[
\mathds{P}(\{B_t^\mathrm{C}=\zeta\}\cap\mathrm{Fill})=\mathds{P}(B_t^\mathrm{C}=\zeta)\mathds{P}(\mathrm{Fill})=\mathds{P}(B_t^\mathrm{C}=\zeta)\frac{\chi(\xi(x))}{a(m-1)+bn}.
\]
\end{corollary}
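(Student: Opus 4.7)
The plan is to establish the first equality in the corollary by showing that the event $\mathrm{Fill}$ is independent of the random variable $B_t^\mathrm{C}$, and then invoke Lemma~\ref{L:inkmart} for the stated value of $\mathds{P}(\mathrm{Fill})$ to obtain the second equality. The key observation is that in the graphical construction, these two quantities are driven by disjoint pieces of the underlying randomness.

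First I would verify that $B_t^\mathrm{C}$ is measurable with respect to the $\sigma$-algebra $\mathcal{F}_B := \sigma(\{U_i^b\}_{i\ge1},\{\tau_i\}_{i\ge1},\{e_i\}_{i\ge1})$. Indeed, at each update time $\tau_i$, Definition~\ref{d:cham}(2) together with the fact that the first coordinate of $\mathrm{C}$ is simply $\mathrm{MaBB}(U_i^b,e_i,B_{\tau_i-}^\mathrm{C})$ shows that the black configuration evolves using only $U_i^b, e_i$ and the previous black configuration (in particular not depending on the colours or labels of non-black particles). At round-ends, Definition~\ref{d:cham}(3) leaves $B$ unchanged. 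An immediate induction then gives $B_t^\mathrm{C}\in\mathcal{F}_B$.

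Next I would show that $\mathrm{Fill}\in\mathcal{F}_d := \sigma(\{d_i\}_{i\ge1})$. By Lemma~\ref{L:inkmc}, the sequence $\{\widehat{\ink}_j\}_{j\ge0}$ evolves as a Markov chain on $\{0,1,\ldots,a(m-1)+bn\}$ whose step from $\widehat{\ink}_j$ is $+\Delta(\widehat{\ink}_j)$ if $d_{j+1}=1$ and $-\Delta(\widehat{\ink}_j)$ otherwise, with the boundary states absorbing. Thus, setting $\hat I_0:=\chi(\xi(x))$ and $\hat I_{j+1}:=\hat I_j+(2d_{j+1}-1)\Delta(\hat I_j)$ defines a chain which, by Lemma~\ref{L:inkmc}, coincides with $\{\widehat{\ink}_j\}$ up to absorption; absorption happens almost surely by Lemma~\ref{L:inkmart}. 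Hence $\mathrm{Fill}=\{\lim_{j\to\infty}\hat I_j=a(m-1)+bn\}$ is a deterministic function of $\{d_i\}$, so $\mathrm{Fill}\in\mathcal{F}_d$.

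Since $\mathcal{F}_B$ and $\mathcal{F}_d$ are independent by construction (items 1--4 of the graphical construction are mutually independent), we conclude $\mathds{P}(\{B_t^\mathrm{C}=\zeta\}\cap\mathrm{Fill})=\mathds{P}(B_t^\mathrm{C}=\zeta)\mathds{P}(\mathrm{Fill})$, and the value $\mathds{P}(\mathrm{Fill})=\chi(\xi(x))/(a(m-1)+bn)$ comes directly from Lemma~\ref{L:inkmart}. The main subtlety is making sure that the pairing and label configurations, which \emph{do} depend on the $d_i$'s through the end-of-round updates in Definition~\ref{d:cham}(3)--(4), never feed back into the black dynamics; this is exactly the feature built into the chameleon update via the MaBB function, and is what makes the decoupling clean.
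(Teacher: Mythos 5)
Your argument is correct and is essentially the paper's own proof: the paper likewise deduces the first equality from the observation that $\mathrm{Fill}$ is driven by the coin flips $\{d_i\}$ while the black-particle trajectory is determined by the $\{\tau_i\},\{e_i\},\{U_i^b\}$ alone, and then quotes Lemma~\ref{L:inkmart} for the value of $\mathds{P}(\mathrm{Fill})$. Your write-up merely makes the measurability claims explicit (with the small caveat that the indices of the coins actually used at depinkings are random, so one should really argue that $\mathds{P}(\mathrm{Fill}\mid\mathcal{F}_B)$ is a.s.\ constant via Lemma~\ref{L:inkmc}, a point the paper also glosses over).
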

\begin{proof}
This follows from Lemma~\ref{L:inkmart} and the fact that event Fill only depends on the outcomes of the coin flips $\{d_i\}_i$ whereas the movement of the black particles is independent of these coin flips.
\end{proof}
\begin{lemma}\label{L:limits}
For all $t\ge0$ and $v\in V$, $\ink_t(v)\le \chi(B^\mathrm{C}_t(v))$.
\end{lemma}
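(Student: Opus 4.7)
The plan is to derive the desired inequality from a stronger invariant, namely that at every time $t$ and every vertex $v$, the total number of non-black particles at $v$ equals $\chi(B^{\mathrm{C}}_t(v))$, i.e.\!
\[
R^{\mathrm{C}}_t(v)+P^{\mathrm{C}}_t(v)+W^{\mathrm{C}}_t(v)=\chi(B^{\mathrm{C}}_t(v)).
\]
Once this is established, since $\ink_t(v)=R^{\mathrm{C}}_t(v)+\tfrac12 P^{\mathrm{C}}_t(v)$ and $W^{\mathrm{C}}_t(v)\ge0$ and $P^{\mathrm{C}}_t(v)\ge0$, the claim follows from
\[
\ink_t(v)\le R^{\mathrm{C}}_t(v)+P^{\mathrm{C}}_t(v)\le R^{\mathrm{C}}_t(v)+P^{\mathrm{C}}_t(v)+W^{\mathrm{C}}_t(v)=\chi(B^{\mathrm{C}}_t(v)).
\]

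First I would check the invariant holds at $t=0$. From Definition~\ref{d:cham}, $B^{\mathrm{C}}_0(v)=\xi(v)$, $P^{\mathrm{C}}_0(v)=0$, and $R^{\mathrm{C}}_0(v)=\chi(\xi(x))\delta_x(v)$. Taking $W^{\mathrm{C}}_0(v):=\chi(B^{\mathrm{C}}_0(v))-R^{\mathrm{C}}_0(v)-P^{\mathrm{C}}_0(v)$ we have $W^{\mathrm{C}}_0(v)=\chi(\xi(v))$ for $v\neq x$ and $W^{\mathrm{C}}_0(x)=0$; all are non-negative, so the invariant holds.

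Next I would check that each of the three kinds of updates preserves the invariant. Between update times nothing changes, so the statement is automatic. At an edge-ringing time $\tau_i$ that occurs strictly inside a round, Steps~1--4 only redistribute the non-black particles previously on the vertices of the ringing edge $e=\{v,w\}$ and possibly relabel some of them, while Step~5 places precisely enough white particles on $v$ and $w$ so that the total number of non-black particles on $v$ after the update is $\chi(B^{\mathrm{C}}_{\tau_i}(v))$ (and the same for $w$ by the identity $\chi(B(v))+\chi(B(w))=\chi(B'(v))+\chi(B'(w))$ noted in the description of the update). For vertices $u\notin e$, neither $B^{\mathrm{C}}$ nor the coloured-particle counts change, so the invariant is preserved there as well. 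At an end-of-round time $iT$, either no particles change at all, or (if \eqref{e:icond} holds) all pink particles are recoloured simultaneously red or white; in either case $R^{\mathrm{C}}+P^{\mathrm{C}}+W^{\mathrm{C}}$ and $B^{\mathrm{C}}$ are unchanged at every vertex, so the invariant persists.

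Since the invariant holds initially and is preserved by every update, it holds for all $t\ge0$, which yields the lemma. There is no real obstacle here; the statement is essentially a bookkeeping consequence of the way Step~5 of the chameleon's update rule is defined, and the only thing to be careful about is verifying the invariant across the three qualitatively distinct kinds of update (mid-round ring, depinking, non-depinking end-of-round).
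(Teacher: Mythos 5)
Your proposal is correct and follows essentially the same route as the paper: the paper's proof is exactly the observation that the number of non-black particles on a vertex carrying $B$ black particles is always $\chi(B)$ (true at time $0$, preserved by Steps 1--5 at non-depinking updates, and unaffected at depinking times since only colours change), from which the bound on $\ink_t(v)$ follows since each non-black particle contributes at most $1$ to the ink. Your write-up merely makes the bookkeeping more explicit.
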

\begin{proof}This follows simply from the fact that the number of non-black particles on a vertex with $B$ black particles is always $\chi(B)$. This is true at time 0, and Steps 1 to 5 guarantee this at update times which are not depinkings. Finally, at depinking times we do not change the number of particles on vertices, only their colour. Observe also that $\ink_t(v)=\chi(B^\mathrm{C}_t(v))$ if at time $t$ all non-black particles on $v$ are red.
\end{proof}

The next result shows that, during a single round and until they meet, a pair of paired red-white particles move (marginally) as independent random walks on the graph, which stay in place with probability $1/2$ when an incident edge rings. For two independent random walks $X,Y$ on a graph $G$ (each of which move by jumping from their current vertex $v$ to a neighbour $w$ when edge $\{v,w\}$ rings), we write $M^{X,Y}$ for their meeting time -- the first time they are on neighbouring vertices, and the edge between them rings for one of the walks (they each have their own independent sequence of edge-rings). If the walks start on the same vertex, we say their meeting time is 0. We let $\hat G$ denote the graph $(V,E,\{r_e/2\}_{e\in E})$, that is, we halve the rates on the edges of graph $G$. 
\begin{lemma}\label{L:beforemeet}Fix $u,v\in V$, $u\neq v$, and $i\in\mathds{N}_0$. Let $X$ and $Y$ be independent random walks on $\hat G$ with $X_0=u$, $Y_0=v$. For any $1\le j\le \sum_vR_{iT}^\mathrm{C}(v)\wedge \sum_vW_{iT}^\mathrm{C}(v)$, conditionally on $L_{iT}^R(j)=u$ and $L_{iT}^W(j)=v$,  for all $t\in[iT,iT+\{T\wedge M^{X,Y}\})$, we have
\[
(L_t^R(j),L_t^W(j))\stackrel{d}{=}(X_{t-iT},Y_{t-iT}).
\]
\end{lemma}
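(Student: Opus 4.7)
My plan is to verify that, under the stated conditioning, the pair $(L^R_t(j),L^W_t(j))$ on $[iT,iT+T\wedge M^{X,Y})$ has the same finite-dimensional distributions as two independent random walks on $\hat G$ started at $(u,v)$. I will do this by matching the instantaneous transition rates of the projected chameleon dynamics with those of the target process and then reading off the conclusion from standard Markov process theory.

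The structural reduction is that, prior to the meeting time, no edge ring of the graphical construction can contain both labelled particles on the same ringing edge, since such a coincidence is precisely $M^{X,Y}$. Consequently each ring in the relevant window affects at most one of the two labelled particles, and neither is pinkened in Step~3a (pinkening requires both members of a pair to be on the ringing edge). This isolates the analysis to single-particle jumps and guarantees that labels and the pink/red status of the two tracked particles are preserved throughout the window.

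Next, consider a ring with $e=\{v,w\}$, $L^R_{\tau-}(j)=v$ and $L^W_{\tau-}(j)\notin\{v,w\}$. Tracing through Step~1's uniform selection from non-paired reds together with Steps~3a/3b's distribution of flexible reds, and using the exchangeability of all reds on $\{v,w\}$ which are non-paired in the chameleon's sense, I compute that conditionally on the post-ring black configuration $B'$ the probability the labelled red ends at $v$ equals $[\theta(v)\ell(v)+(1-\theta(v))u(v)]/R_{v,w}$. Averaging over $B'$ then reduces to the identity
\[
\mathbb{E}_{B'}[\mathrm{P}_{e,B,B'}(v,v)]=\mathbb{E}_{B'}[\mathrm{P}_{e,B,B'}(w,v)]=\tfrac12,
\]
which follows by rewriting the inner probability via~\eqref{e:MaBB2BBSP} and invoking the BBSP symmetry $\mathbb{E}[\xi'(v)]=(\xi(v)+\xi(w))/2$ of Property~\ref{a:sym}. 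Combined with the defining equation~\eqref{e:thetadef} of $\theta(v)$ and a parallel cancellation for the pink contribution (the expected number of post-update pinks on $v$ equals $P_{v,w}/2$ after averaging $B'$, again by the $v\leftrightarrow w$ symmetry of the BBSP update), this collapses the marginal jump probability of the labelled red to $1/2$. The analogous computation handles the case when the labelled white is the one on the ringing edge.

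Since this marginal $1/2$ does not depend on the rest of the chameleon state beyond the current labelled positions, the projected pair process has transition rate $r_e/2$ for a single labelled particle crossing any edge $e$ incident to its position, with no simultaneous-jump events pre-meeting. These rates agree with the generator of two independent random walks on $\hat G$, and conditional independence of the two particles' jumps, given any past history, follows by the tower property applied along the Markov chain of black configurations together with the disjointness of affecting rings in the pre-meeting window. The principal technical hurdle is verifying the marginal identity above, in particular the parallel cancellation involving the pink contributions, which requires careful manipulation of the $\ell,u,\ell^P,u^P$ definitions; once established, the projection is Markov with the target generator, and the claimed distributional equality follows.
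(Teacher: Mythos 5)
Your structural reduction is the same as the paper's: before $M^{X,Y}$ the two labelled particles are never on the same ringing edge, so the tracked pair cannot be pinkened and it suffices to show that a single labelled particle on a ringing edge ends up on either endpoint with probability $1/2$. The gap is in how you establish that $1/2$. Your per-$B'$ formula for the labelled red's landing probability, $[\theta(v)\ell(v)+(1-\theta(v))u(v)]/R_{v,w}$, is incorrect whenever other \emph{paired} red particles (pairs with both members on the ringing edge) are present: Step~1 selects the non-paired reds \emph{first} for the lower bounds, so the tracked red (whose partner is elsewhere, hence ``non-paired'' in the Step~1 sense) is exchangeable only with the other non-paired reds, not with all reds on the edge. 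For instance, if $\ell(v)=2$, $\ell(w)=0$ and the tracked red is the only non-paired red among $R_{v,w}=4$, it lands on $v$ with probability $1$, while your formula gives $1-\theta(v)/2$. In addition, $\theta(v)\ell(v)+(1-\theta(v))u(v)$ is not the expected number of particles that are still red on $v$ once Step~3a pinkens some of them. Since the quantity you average over $B'$ is not the true landing probability, the (correct) identities $\mathds{E}_{B'}[\mathrm{P}_{e,B,B'}(v,v)]=\mathds{E}_{B'}[\mathrm{P}_{e,B,B'}(w,v)]=1/2$ do not deliver the marginal $1/2$, and the same issue infects the ``parallel cancellation'' you invoke for the pink contribution.

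The paper closes exactly this step with a symmetry argument that avoids computing the landing probability at all: pair each outcome $B'$ with its mirror $\tilde B$ obtained by swapping the black counts on $v$ and $w$; by Property~\ref{a:sym} the two are equally likely, and one checks $\ell(v)=\tilde\ell(w)$, $u(v)=\tilde u(w)$, $\ell^P(v)=\tilde\ell^P(w)$, $u^P(v)=\tilde u^P(w)$ and $\theta(v)=\tilde\theta(w)$, the last via \eqref{e:thetadef} together with $\mathrm{P}_{e,B,\tilde B}(v,v)=\mathrm{P}_{e,B,B'}(v,w)$. Hence whatever the landing distribution is under $B'$, it is the reflection of the one under $\tilde B$, so the marginal is $1/2$ conditionally on everything else; this uniformity is also what gives the independence of the two walks without a separate tower-property argument. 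I recommend replacing your direct computation with this involution.
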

\begin{proof}We make use of Property~\ref{a:sym}. Suppose edge $e=\{v,w\}$ rings during time interval $[iT,(i+1)T)$ and the black particles update from configuration $B$. Suppose $B'$ is a possible configuration of the black particles as a result of the update. Let $\tilde B$ be the configuration of black particles with $\tilde B(v)=B'(w)$, $\tilde B(w)=B'(v)$ and for $z\notin e$, $\tilde B(z)=B'(z)=B(z)$. As black particles update as non-marked particles in MaBB, $B'$ and $\tilde B$ are equally likely to be the configuration of black particles after the update, by Property~\ref{a:sym}. We claim that the probability that a labelled red particle (similarly labelled white particle) will be on $v$ after the update if configuration $B'$ is chosen as the new black configuration is the same as the probability the same labelled red particle (respectively, labelled white particle) will be on $w$ if configuration $\tilde B$ is chosen. This will suffice since prior to meeting, a paired red and white particle will never be on the same ringing edge.

This claim will follow from showing that $\ell(v)=\tilde\ell(w)$, $\ell^P(v)=\tilde\ell^P(w)$, $u(v)=\tilde u(w)$, $u^P(v)=\tilde u^P(w)$ and $\theta(v)=\tilde\theta(w)$, where the notation with tilde refers to the update in which $\tilde B$ is chosen, and notation without the tilde to the update in which $B'$ is chosen. The identities regarding the lower and upper values are immediate from their definitions. To show $\theta(v)=\tilde\theta(w)$, observe that
\begin{equation}
\begin{aligned}\label{e:tildetheta}
&\tilde\theta(v)[\tilde\ell(v)+\frac12\tilde\ell^P(v)]+(1-\tilde\theta(v))[\tilde u(v)+\frac12 \tilde u^P(v)]\\&=(R(v)+\frac12 P(v))\mathrm{P}_{e,B,\tilde B}(v,v)+(R(w)+\frac12 P(w))\mathrm{P}_{e,B,\tilde B}(w,v).
\end{aligned}
\end{equation}
But by Property~\ref{a:sym}, we have
\begin{align*}
\mathrm{P}_{e,B,\tilde B}(v,v)&=\frac{\tilde B(v)+1}{B(v)+B(w)+1}\frac{\mathrm{P}_e^{\mathrm{BB}(G,s,m)}(C_{B,v},C_{\tilde B,v})}{\mathrm{P}_e^{\mathrm{BB}(G,s,m)}(B,\tilde B)}\\
&=\frac{ B'(w)+1}{B(v)+B(w)+1}\frac{\mathrm{P}_e^{\mathrm{BB}(G,s,m)}(C_{B,v},C_{B',w})}{\mathrm{P}_e^{\mathrm{BB}(G,s,m)}(B,B')}\\
&=\mathrm{P}_{e,B, B'}(v,w),
\end{align*}
and similarly $\mathrm{P}_{e,B,\tilde B}(w,v)=\mathrm{P}_{e,B,B'}(w,w)$. Plugging these into~\eqref{e:tildetheta} shows that $\tilde\theta(v)$ solves the same equation as $\theta(w)$, hence they are equal; similarly $\theta(v)=\tilde\theta(w)$.
\end{proof}

\subsection{From ink to total variation}
In this section we show a crucial connection between the MaBB initialised at $(\xi,x)$ and its associated chameleon process. To emphasise the dependence of $\ink_t$ on the initial configuration of the MaBB, we shall sometimes write it as $\ink_t^{(\xi,x)}$.

\begin{proposition}\label{P:MaBBtoC} Let $(\xi_t,m_t)$ denote the time-$t$ configuration of a MaBB initialised at $(\xi,x)\in\Omega'_{G,m}$. For every $t\ge0$ and $(\zeta,y)\in \Omega'_{G,m}$, 
\[
\mathds{P}\big((\xi_t,m_t)=(\zeta,y)\big)=\mathds{E}\left[\frac{\ink_t^{(\xi,x)}(y)}{\chi(\xi(x))}\indic{B_t^\mathrm{C}=\zeta}\right].
\]
\end{proposition}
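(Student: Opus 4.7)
The plan is to prove the identity by induction on the update epochs of the coupled chameleon and MaBB, exploiting the fact that between updates both $(\xi_t,m_t)$ and $(B_t^\mathrm{C},R_t^\mathrm{C},P_t^\mathrm{C})$ are constant. Setting $g(t,\zeta,y):=\chi(\xi(x))\mathds{P}((\xi_t,m_t)=(\zeta,y))$ and $f(t,\zeta,y):=\mathds{E}[\ink_t^{(\xi,x)}(y)\indic{B_t^\mathrm{C}=\zeta}]$, the goal is $f\equiv g$. The base case at $t=0$ is immediate from Definition~\ref{d:cham}(1), which gives $B_0^\mathrm{C}=\xi$, $R_0^\mathrm{C}=\chi(\xi(x))\delta_x$ and no pink, so $f(0,\zeta,y)=\chi(\xi(x))\delta_x(y)\indic{\xi=\zeta}=g(0,\zeta,y)$. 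The end-of-round times $iT$ are also easy: the MaBB does not update (so $g$ is preserved), the black configuration is preserved by Definition~\ref{d:cham}(3), and either no depinking occurs or each pink particle on any vertex $y$ is recoloured red or white with probability $1/2$ via $d_i$ independently of $B^\mathrm{C}$; hence $\mathds{E}[\ink_{iT}(y)\mid\mathcal{F}_{(iT)-}]=R_{(iT)-}^\mathrm{C}(y)+\tfrac12 P_{(iT)-}^\mathrm{C}(y)=\ink_{(iT)-}(y)$, which preserves $f$.

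The substantive step is at a ringing time $\tau_i$ with edge $e_i=e=\{v,w\}$. By the inductive hypothesis and the Markov property it suffices to verify, on $\{e_i=e\}$,
\[
\mathds{E}\!\left[\ink_{\tau_i}(y)\indic{B^\mathrm{C}_{\tau_i}=\zeta}\,\middle|\,\mathcal{F}_{\tau_i-}\right]
=\sum_{y'\in V}\ink_{\tau_i-}(y')\,\mathrm{P}_e^\mathrm{MaBB}\!\left((B^\mathrm{C}_{\tau_i-},y'),(\zeta,y)\right).
\]
Since the chameleon black particles update as the non-marked MaBB particles, we have $\mathds{P}(B_{\tau_i}^\mathrm{C}=\zeta\mid\mathcal{F}_{\tau_i-},e_i=e)=\mathrm{P}_e^{\mathrm{BB}(G,s,m-1)}(B_{\tau_i-}^\mathrm{C},\zeta)$, which factors out of both sides. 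I then split on $y\in e$ versus $y\notin e$. If $y\notin e$ the chameleon leaves the non-black particles at $y$ untouched, so $\ink_{\tau_i}(y)=\ink_{\tau_i-}(y)$, and on the MaBB side only $y'=y$ contributes (the marked particle stays), yielding the matching identity. If $y=v\in e$, only $y'\in\{v,w\}$ contribute, and by~\eqref{e:MaBB2BBSP} the RHS becomes
\[
\mathrm{P}_e^{\mathrm{BB}(G,s,m-1)}(B^\mathrm{C}_{\tau_i-},\zeta)\bigl[\ink_{\tau_i-}(v)\mathrm{P}_{e,B^\mathrm{C}_{\tau_i-},\zeta}(v,v)+\ink_{\tau_i-}(w)\mathrm{P}_{e,B^\mathrm{C}_{\tau_i-},\zeta}(w,v)\bigr],
\]
which by the definition~\eqref{e:thetadef} of $m^*(v)$ equals $\mathrm{P}_e^{\mathrm{BB}(G,s,m-1)}(B^\mathrm{C}_{\tau_i-},\zeta)\cdot m^*(v)$; Lemma~\ref{L:onestep} then identifies this with the LHS.

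The main obstacle is precisely this ringing-time computation, but the design of $\theta(v)$ in~\eqref{e:thetadef} makes the matching essentially tautological: $m^*(v)$ was defined so that the expected ink after one chameleon update is $\ink(v)\mathrm{P}_{e,B,B'}(v,v)+\ink(w)\mathrm{P}_{e,B,B'}(w,v)$, and the relation~\eqref{e:MaBB2BBSP} ensures this is exactly the contribution of the MaBB kernel on $e$. The bulk of the work is therefore bookkeeping, most notably confirming that depinking preserves the expected ink vertex-by-vertex and that the black-particle dynamics factor cleanly out of the conditional expectation.
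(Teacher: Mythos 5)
Your proposal is correct, and it reaches the identity by a genuinely more direct route than the paper. The paper proves a stronger, trajectory\--conditional statement, $\mathds{P}(m_t=y\mid(\xi_s)_{s\le t})=\mathds{E}[\ink_t(y)/\chi(\xi(x))\mid(B_s^\mathrm{C})_{s\le t}]$, by first introducing the auxiliary process $M^*$ (Step 3a replaced by independent placement, no depinkings), proving the identity for $M^*$ by induction over ringing times (Lemma~\ref{L:MipstoM*}), and then interpolating via a family $M^r$ that runs the chameleon dynamics up to the $r$th update epoch and $M^*$ thereafter, inducting on $r$ with a partition into the events $E_1,E_2,E_3$. You instead run a single induction over the update epochs on the unconditional quantity $\mathds{E}[\ink_t(y)\indic{B_t^\mathrm{C}=\zeta}]$; the reason this works without the paper's machinery is that the indicator $\indic{B^\mathrm{C}_{\tau_i-}=\zeta'}$ lets the transition probabilities $\mathrm{P}_{e,\zeta',\zeta}(\cdot,\cdot)$, which are deterministic functions of $\zeta'$ and $\zeta$, factor out of the expectation, so only the marginal induction hypothesis is needed rather than the conditional one. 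The mathematical heart is identical in both arguments: Lemma~\ref{L:onestep} together with~\eqref{e:MaBB2BBSP} says the one\--step expected ink is exactly the MaBB kernel applied to the pre\--update ink vector, and the fair coin $d_i$ makes depinking preserve expected ink vertex by vertex. Two small points to tighten: Lemma~\ref{L:onestep} is stated conditionally on $(B,B',R,P,e_1)$ only, whereas you invoke it conditionally on all of $\mathcal{F}_{\tau_i-}$ (including the label configurations); this is justified because the computation in its proof yields $m^*(v)$ for any fixed pairing structure, but it should be said. Likewise the passage from the statement at the (random) interleaved epochs $\hat\tau_r$ back to deterministic times $t$ deserves a sentence, though the paper's own proof elides the same step.
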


The proof of Proposition~\ref{P:MaBBtoC}  is similar in spirit to the proof of Lemma 1 of \cite{morris}.
We introduce a new process $M^*$ which will also be constructed using the graphical construction. This process is similar to the chameleon process in that vertices are occupied by particles of various colours (black, red, pink and white). Like in the chameleon process, if there are $B$ black particles on a vertex, then there are $\chi(B)$ non-black particles.  The process $M^*$ evolves exactly as the chameleon process except we replace Step 3a with Step 3a$^\prime$, described below. Further, $M^*$ does not have any updates at the ends of rounds (so in particular no depinking times). As a result the number of red, white and pink particles remain constant over time. We use the same terminology (e.g.\! ink) for process $M^*$.

\underline{Step 3a$^\prime$:} Any  red particles left in the pile are each independently placed onto $v$ or $w$ equally likely.

It can be shown (following the same proof) that Lemma~\ref{L:onestep} holds also for $M^*$:
\begin{lemma}\label{L:onestepM*}
For any $v,w\in V$, $B,R,P$ initial configurations of black, red and pink particles, and $B'$ the configuration of black particles just after the first update (at time $\tau_1$),
\[\mathds{E}^{M^*}[\ink_{\tau_1}(v)\mid B, B', R, P, \{e_1=\{v,w\}\}]=m^*(v).\]\end{lemma}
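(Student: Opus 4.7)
The plan is to copy the proof of Lemma~\ref{L:onestep} essentially verbatim. Steps 1, 2, 3b, and 4 of the $M^*$ update coincide with their chameleon counterparts, so the only substantive check is that Step 3a$^\prime$ of $M^*$ contributes the same expected amount to $\ink_{\tau_1}(v)$ as Step 3a does in the chameleon process. Once this is verified, the same chain of equalities appearing at the end of the proof of Lemma~\ref{L:onestep} identifies $\mathds{E}^{M^*}[\ink_{\tau_1}(v)\mid B,B',R,P,\{e_1=\{v,w\}\}]$ with $m^*(v)$.

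For that single check: after Step 1 the pool contains $u(v)-\ell(v)$ flexible red particles. In Step 3a of the chameleon process, each flexible red contributes $1/2$ to $\ink_{\tau_1}(v)$ in expectation, either because it is pinkened together with its paired white and one of the two resulting pinks is placed on $v$ (contributing $1/2$), or because it remains red and is placed on $v$ or $w$ independently with probability $1/2$ (again contributing $1/2$ in expectation). In Step 3a$^\prime$ of $M^*$, every flexible red is simply placed on $v$ or $w$ independently with probability $1/2$, so by linearity of expectation the total contribution is the same $(u(v)-\ell(v))/2$. The fork in Step 2 is therefore weighted against identical quantities in the two processes, and the contributions from Steps 1, 3b, and 4 are term-by-term unchanged.

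There is no genuine obstacle here; indeed, $M^*$ is designed precisely so that the one-step expected ink identity survives the removal of the pinkening mechanism in Step 3a. The only subtlety is to notice that pinkening and independent random placement are interchangeable as far as the expected ink value is concerned, which is immediate from linearity of expectation. This is the same observation that underpins the martingale property of total ink in the chameleon process (cf.\! Lemma~\ref{L:inkonly} and Lemma~\ref{L:inkmart}), and it transfers here without modification.
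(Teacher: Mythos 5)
Your proposal is correct and matches the paper's approach: the paper itself disposes of this lemma by remarking that the proof of Lemma~\ref{L:onestep} carries over verbatim, and the one check you isolate (that each flexible red left in the pool after Step 1 contributes $1/2$ in expectation to $\ink_{\tau_1}(v)$ under Step 3a$^\prime$ just as under Step 3a, whether it is pinkened or placed uniformly) is exactly the point that makes the displayed computation in that proof go through unchanged.
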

\begin{lemma}\label{L:MipstoM*} Fix $(\xi,x)\in\Omega'_{G,m}$, random variable $\ink_0(y)$ taking values in $[0,\chi(\xi(y))]\cap(\mathds{N}_0/2)$ for each $y\in V$, and denote by $(\xi_t,m_t)$ the time-$t$ configuration of a MaBB which starts from a random configuration $(\xi_0,m_0)$ satisfying almost surely
\[
\forall\, y\in V\quad \mathds{P}(m_0=y\mid \xi_0)=\mathds{E}^{M^*}\left[\frac{\ink_0(y)}{\chi(\xi(x))}\bigm| B_0^\mathrm{C}\right],
\]
where $M^*$ starts with configuration of black particles $B_0^\mathrm{C}=\xi_0$ and with initial ink value of $\ink_0(y)$ at each $y\in V$.
Then for all $t\ge 0$, almost surely
\[
\forall\, y\in V\quad\mathds{P}(m_t=y\mid (\xi_s)_{0\le s\le t})=\mathds{E}^{M^*}\left[\frac{\ink_t(y)}{\chi(\xi(x))}\bigm| (B_s^\mathrm{C})_{0\le s\le t}\right].
\]
\end{lemma}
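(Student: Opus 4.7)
The plan is to induct over the update times $\tau_0=0<\tau_1<\tau_2<\cdots$ of the shared graphical construction. Because both the MaBB and $M^*$ move their non-marked/black particles via the same deterministic rule $\mathrm{MaBB}(U_i^b,e_i,\cdot)$, the coupling gives $\xi_t=B_t^\mathrm{C}$ almost surely, so conditioning on $(\xi_s)_{s\le t}$ is equivalent to conditioning on $(B_s^\mathrm{C})_{s\le t}$. The base case $t=0$ is the assumption of the lemma, and between consecutive update times neither process changes, so the identity propagates trivially. It therefore suffices to verify the inductive step across a single update time $\tau_{i+1}$ with ringing edge $e_{i+1}=\{v,w\}$, assuming the identity at $\tau_i$.

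For $y\notin e_{i+1}$ the marked particle in MaBB cannot move and no ink enters or leaves $y$ in $M^*$, so both sides at $\tau_{i+1}$ equal their values at $\tau_i$ and the inductive hypothesis closes the case. For $y\in\{v,w\}$, say $y=v$, write $B=\xi_{\tau_i}$, $B'=\xi_{\tau_{i+1}}$. Using the MaBB transition rule from Section~\ref{S:MaBB} together with the fact that the evolution of the non-marked particles is independent of the marked particle's location (so that conditioning on $\xi_{\tau_{i+1}}$ does not perturb the conditional law of $m_{\tau_i}$ given the past),
\begin{align*}
\mathds{P}(m_{\tau_{i+1}}=v\mid(\xi_s)_{s\le\tau_{i+1}})&=\mathrm{P}_{e,B,B'}(v,v)\,\mathds{P}(m_{\tau_i}=v\mid(\xi_s)_{s\le\tau_i})\\
&\phantom{=}\,+\mathrm{P}_{e,B,B'}(w,v)\,\mathds{P}(m_{\tau_i}=w\mid(\xi_s)_{s\le\tau_i}).
\end{align*}

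On the $M^*$ side, Lemma~\ref{L:onestepM*} says $\mathds{E}^{M^*}[\ink_{\tau_{i+1}}(v)\mid B,B',R,P,\{e_{i+1}=e\}]=m^*(v)$, and since $R(v)+\tfrac12P(v)=\ink_{\tau_i}(v)$ (and likewise at $w$), the key observation is that $m^*(v)$ is \emph{linear} in $(\ink_{\tau_i}(v),\ink_{\tau_i}(w))$ with coefficients $(\mathrm{P}_{e,B,B'}(v,v),\mathrm{P}_{e,B,B'}(w,v))$ that are measurable with respect to $(B_s^\mathrm{C})_{s\le\tau_{i+1}}$. The tower property therefore gives
\begin{align*}
\mathds{E}^{M^*}[\ink_{\tau_{i+1}}(v)\mid(B_s^\mathrm{C})_{s\le\tau_{i+1}}]&=\mathrm{P}_{e,B,B'}(v,v)\,\mathds{E}^{M^*}[\ink_{\tau_i}(v)\mid(B_s^\mathrm{C})_{s\le\tau_i}]\\
&\phantom{=}\,+\mathrm{P}_{e,B,B'}(w,v)\,\mathds{E}^{M^*}[\ink_{\tau_i}(w)\mid(B_s^\mathrm{C})_{s\le\tau_i}].
\end{align*}
Dividing by $\chi(\xi(x))$ and invoking the inductive hypothesis on each of the two right-hand terms matches the two expressions, completing the induction.

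The main obstacle is the conditional-independence bookkeeping. One must verify carefully that the randomness $U_{i+1}^c$ used in Steps 1--5 of $M^*$ is independent of the $\sigma$-algebra generated by $(B_s^\mathrm{C})_{s\le\tau_{i+1}}$ together with the ink history through $\tau_i$ (so that the tower property applies without picking up a spurious dependence), and symmetrically that in MaBB the conditional law of $m_{\tau_{i+1}}$ given the history depends only on $(B,B',e,m_{\tau_i})$. Both facts are immediate from the independence of $\{U_i^b\}$ and $\{U_i^c\}$ in item~3 of the graphical construction and from the MaBB construction in Section~\ref{S:MaBB}, but spelling them out cleanly is what makes the induction go through.
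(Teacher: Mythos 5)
Your proposal is correct and follows essentially the same route as the paper: induction over the update times, the one-step identity of Lemma~\ref{L:onestepM*} expressing the conditional expected ink at $v$ as the linear combination $\ink_{\tau_i}(v)\mathrm{P}_{e,B,B'}(v,v)+\ink_{\tau_i}(w)\mathrm{P}_{e,B,B'}(w,v)$, and the matching one-step decomposition of the conditional law of the marked particle. The one place the paper is more careful is that the ringing edge $e_r$ is not measurable with respect to the trajectory $(\xi_s)_{s\le \tau_r}$ alone, so both sides must be kept inside an outer conditional expectation over $e_r$ given that trajectory --- precisely the conditional-independence bookkeeping you flag at the end.
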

\begin{proof}
As $(\xi_s)_{s\ge 0}$ and $(B^\mathrm{C}_s)_{s\ge0}$ are constructed using the same $(U_r^b)_{r=1}^\infty$, they are equal almost surely. 

It suffices to show the statement at the update times. We shall use induction. The base case (time $\tau_0=0$) follows from the assumption. 
Fix $r\in\mathds{N}$ and suppose the result holds up to (and including) time $\tau_{r-1}$. 

Observe that  by the strong Markov property and Lemma~\ref{L:onestepM*}
(and recall the choice of $\theta$ from~\eqref{e:thetadef} and also that $B^\mathrm{C}_{\tau_r}=\mathrm{MaBB}(U_r^b,e_r,B^\mathrm{C}_{\tau_{r-1}})$), for any $y\in V$, almost surely
\begin{align*}
&\mathds{E}^{M^*}[\ink_{\tau_r}(y)\mid U_r^b,e_r,B^\mathrm{C}_{\tau_{r-1}},R_{\tau_{r-1}}^\mathrm{C},P_{\tau_{r-1}}^\mathrm{C}]\\&=\indic{y\in e_r}\bigg\{\left[R_{\tau_{r-1}}^\mathrm{C}(y)+\frac12 P_{\tau_{r-1}}^\mathrm{C}(y)\right]\,\mathrm{P}_{e_r,B^\mathrm{C}_{\tau_{r-1}},B^\mathrm{C}_{\tau_r}}(y,y)\\
&\phantom{=}+\left[R_{\tau_{r-1}}^\mathrm{C}(e_r\setminus\{y\})+\frac12 P_{\tau_{r-1}}^\mathrm{C}(e_r\setminus\{y\})\right]\,\mathrm{P}_{e_r,B^\mathrm{C}_{\tau_{r-1}},B^\mathrm{C}_{\tau_r}}(e_r\setminus\{y\},y)\bigg\} +\indic{y\notin e_r}\ink_{\tau_{r-1}}(y)\\
&=\indic{y\in e_r}\Big\{\ink_{\tau_{r-1}}(y)\mathrm{P}_{e_r,B^\mathrm{C}_{\tau_{r-1}},B^\mathrm{C}_{\tau_r}}(y,y)+\ink_{\tau_{r-1}}(e_r\setminus\{y\})\mathrm{P}_{e_r,B^\mathrm{C}_{\tau_{r-1}},B^\mathrm{C}_{\tau_r}}(e_r\setminus\{y\},y)\Big\}\\&\phantom{=}+\indic{y\notin e_r}\ink_{\tau_{r-1}}(y).
\end{align*}
Taking an expectation, the first half of the first term above becomes 
\begin{align*}
&\mathds{E}^{M^*}\left[\indic{y\in e_r}\ink_{\tau_{r-1}}(y)\mathrm{P}_{e_r,B^\mathrm{C}_{\tau_{r-1}},B^\mathrm{C}_{\tau_r}}(y,y)\bigm|(B^\mathrm{C}_s)_{s\le\tau_r}\right]
\\&=\mathds{E}^{M^*}\left[\mathds{E}^{M^*}\left[\indic{y\in e_r}\ink_{\tau_{r-1}}(y)\mathrm{P}_{e_r,B^\mathrm{C}_{\tau_{r-1}},B^\mathrm{C}_{\tau_r}}(y,y)\bigm|e_r,\,(B^\mathrm{C}_s)_{s\le\tau_r}\right]\Bigm|(B^\mathrm{C}_s)_{s\le\tau_r}\right]\\
&=\mathds{E}^{M^*}\left[\indic{y\in e_r}\mathrm{P}_{e_r,B^\mathrm{C}_{\tau_{r-1}},B^\mathrm{C}_{\tau_r}}(y,y)\mathds{E}^{M^*}\left[\ink_{\tau_{r-1}}(y)\bigm|(B^\mathrm{C}_s)_{s\le\tau_{r-1}}\right]\Bigm|(B^\mathrm{C}_s)_{s\le\tau_r}\right]\\
&=\mathds{E}^{M^*}\left[\chi(\xi(x))\mathds{P}(m_{\tau_{r-1}}=y\mid (\xi_s)_{s\le \tau_{r-1}})\,\indic{y\in e_r}\mathrm{P}_{e_r,B^\mathrm{C}_{\tau_{r-1}},B^\mathrm{C}_{\tau_r}}(y,y)\Bigm|(B^\mathrm{C}_s)_{s\le\tau_r}\right],
\end{align*}using in the penultimate step that almost surely \[\mathds{E}^{M^*}\left[\ink_{\tau_{r-1}}(y)\bigm|e_r, \,(B^\mathrm{C}_s)_{s\le\tau_{r}}\right]=\mathds{E}^{M^*}\left[\ink_{\tau_{r-1}}(y)\bigm|(B^\mathrm{C}_s)_{s\le\tau_{r-1}}\right],\] since $B^\mathrm{C}_{\tau_r}=\mathrm{MaBB}(U_r^b,e_r,B^\mathrm{C}_{\tau_{r-1}})$ and $\ink_{\tau_{r-1}}(y)$ is independent of $e_r$ and $U_r^b$; and using the induction hypothesis in the last step. Similarly,
\begin{align*}
&\mathds{E}^{M^*}\left[\indic{y\in e_r}\ink_{\tau_{r-1}}(e_r\setminus\{y\})\mathrm{P}_{e_r,B^\mathrm{C}_{\tau_{r-1}},B^\mathrm{C}_{\tau_r}}(e_r\setminus\{y\},y)\bigm|(B^\mathrm{C}_s)_{s\le\tau_r}\right]
\\&=\mathds{E}^{M^*}\left[\chi(\xi(x))\mathds{P}(m_{\tau_{r-1}}=e_r\setminus\{y\}\mid (\xi_s)_{s\le \tau_{r-1}},\,e_r)\,\indic{y\in e_r}\mathrm{P}_{e_r,B^\mathrm{C}_{\tau_{r-1}},B^\mathrm{C}_{\tau_r}}(e_r\setminus\{y\},y)\Bigm|(B^\mathrm{C}_s)_{s\le\tau_r}\right],
\end{align*}
and thus
\begin{align}\label{e:M*ink}
&\mathds{E}^{M^*}\left[\frac{\ink_{\tau_r}(y)}{\chi(\xi(x))}\bigm| (B^\mathrm{C}_s)_{s\le \tau_r}\right]\\\notag
&=\mathds{E}^{M^*}\Big[\mathds{P}(m_{\tau_{r-1}}=y\mid (\xi_s)_{s\le \tau_{r-1}})\left[\indic{y\in e_r}\mathrm{P}_{e_r,\xi_{\tau_{r-1}},\xi_{\tau_r}}(y,y)+\indic{y\notin e_r}\right]\\
&\phantom{=\mathds{E}\Big[}+\indic{y\in e_r}\mathds{P}(m_{\tau_{r-1}}=e_r\setminus\{y\}\mid (\xi_s)_{s\le \tau_{r-1}},\,e_r)\,\mathrm{P}_{e_r,\xi_{\tau_{r-1}},\xi_{\tau_r}}(e_r\setminus\{y\},y)\Bigm|(\xi_s)_{s\le\tau_r}\Big].\notag
\end{align}
On the other hand, using the definition of MaBB$^*$ from~\eqref{e:MaBBdef},
\begin{align*}
&\mathds{P}(m_{\tau_r}=y\mid(\xi_s)_{s\le\tau_r})\\
&=\mathds{P}(\mathrm{MaBB}^*(U_r^b,U^c_r,e_r,\xi_{\tau_{r-1}},m_{\tau_{r-1}})=y\mid(\xi_s)_{s\le\tau_r})\\
&=\mathds{P}\bigg(m_{\tau_{r-1}}\left[\indic{m_{\tau_{r-1}}\notin e_r}+\indic{m_{\tau_{r-1}}\in e_r}\indic{U_r^c<\mathrm{P}_{e_r,\xi_{\tau_{r-1}},\xi_{\tau_r}}(m_{\tau_{r-1}},m_{\tau_{r-1}})}\right]\\&\phantom{=\mathds{P}\bigg(}+(e_r\setminus\{m_{\tau_{r-1}}\})\indic{m_{\tau_{r-1}}\in e_r}\indic{U_r^c\ge\mathrm{P}_{e_r,\xi_{\tau_{r-1}},\xi_{\tau_r}}(m_{\tau_{r-1}},m_{\tau_{r-1}})}=y\Bigm|(\xi_s)_{s\le\tau_r}\bigg)\\
&=\mathds{P}\left(\{m_{\tau_{r-1}}=y\in e_r\}\cap\{U_r^c<\mathrm{P}_{e_r,\xi_{\tau_{r-1}},\xi_{\tau_r}}(y,y)\}\mid(\xi_s)_{s\le\tau_r}\right)\\
&\phantom{=}+\mathds{P}\left(\{m_{\tau_{r-1}}=e_r\setminus\{y\},\,y\in e_r\}\cap\{U_r^c<\mathrm{P}_{e_r,\xi_{\tau_{r-1}},\xi_{\tau_r}}(e_r\setminus\{y\},y)\}\mid(\xi_s)_{s\le\tau_r}\right)\\
&\phantom{=}+\mathds{P}\left(\{m_{\tau_{r-1}}=y\notin e_r\}\mid(\xi_s)_{s\le\tau_r}\right).
\end{align*}
Using the tower property of conditional expectation we condition further on $e_r$, and then use that given $e_r$ and $(\xi_s)_{s\le \tau_r}$, the event $\{U_r^c<\mathrm{P}_{e_r,\xi_{\tau_{r-1}},\xi_{\tau_r}}(y,y)\}$ is independent of the event $\{m_{\tau_{r-1}}=y\}\cap\{y\in e_r\}$, to obtain
\begin{align*}
&\mathds{P}(m_{\tau_r}=y\mid(\xi_s)_{s\le\tau_r})\\
&=\mathds{E}\left[\mathds{E}\left[\indic{m_{\tau_{r-1}}=y}\big(\indic{y\in e_r}\indic{U_r^c<\mathrm{P}_{e_r,\xi_{\tau_{r-1}},\xi_{\tau_r}}(y,y)}+\indic{y\notin e_r}\big)\mid(\xi_s)_{s\le\tau_r},  e_r\right]\Bigm| (\xi_s)_{s\le\tau_r}\right]\\
&\phantom{=}+\mathds{E}\left[\mathds{E}\left[\indic{y\in e_r}\indic{m_{\tau_{r-1}}=e_r\setminus\{y\}}\indic{U_r^c<\mathrm{P}_{e_r,\xi_{\tau_{r-1}},\xi_{\tau_r}}(e_r\setminus\{y\},y)}\mid(\xi_s)_{s\le\tau_r}, e_r\right]\Bigm| (\xi_s)_{s\le\tau_r}\right]\\\
&=\mathds{E}\left[\mathds{E}\left[\indic{m_{\tau_{r-1}}=y}\left[\indic{y\in e_r}\mathrm{P}_{e_r,\xi_{\tau_{r-1}},\xi_{\tau_r}}(y,y)+\indic{y\notin e_r}\right]\mid (\xi_s)_{s\le\tau_r},e_r\right]\Bigm| (\xi_s)_{s\le\tau_r}\right]\\
&\phantom{=}+\mathds{E}\left[\mathds{E}\left[\indic{y\in e_r}\indic{m_{\tau_{r-1}}=e_r\setminus\{y\}}\mathrm{P}_{e_r,\xi_{\tau_{r-1}},\xi_{\tau_r}}(e_r\setminus\{y\},y)\mid (\xi_s)_{s\le\tau_r},e_r\right]\Bigm| (\xi_s)_{s\le\tau_r}\right]\\
&=\mathds{E}\Big[\mathds{P}^\mathrm{MaBB}(m_{\tau_{r-1}}=y\mid (\xi_s)_{s\le \tau_{r-1}})\,\left[\indic{y\in e_r}\mathrm{P}_{e_r,\xi_{\tau_{r-1}},\xi_{\tau_r}}(y,y)+\indic{y\notin e_r}\right]\\
&\phantom{=\mathds{E}\Big[}+\indic{y\in e_r}\mathds{P}^\mathrm{MaBB}(m_{\tau_{r-1}}=e_r\setminus\{y\}\mid (\xi_s)_{s\le \tau_{r-1}},\,e_r)\,\mathrm{P}_{e_r,\xi_{\tau_{r-1}},\xi_{\tau_r}}(e_r\setminus\{y\},y)\Bigm|(\xi_s)_{s\le\tau_r}\Big].
\end{align*}
which agrees with \eqref{e:M*ink} and so completes the inductive step.
\end{proof}

We now turn to the proof of Proposition~\ref{P:MaBBtoC}.

\begin{proof}[Proof of Proposition~\ref{P:MaBBtoC}]
We shall need a list of times at which updates occur for the chameleon process; recall that the chameleon process updates at times $\{\tau_r\}_{r\ge1}$ but also at depinking times. To this end, we set $\hat\tau_0=0$ and for each $r\ge1$, we set
\[
\hat \tau_r=(\min\{\tau_m:\,\tau_m>\hat\tau_{r-1}\})\wedge(\min\{D_i:\,D_i> \hat\tau_{r-1},\,i\in\mathds{N}\}).
\]
Similarly a hat placed on notation (e.g.\! $\hat e_r$) refers to the (in this example) edge chosen at time $\hat\tau_r$. If this is a depinking time then we set $\hat e_r=V$.

Next, for each $r\ge1$ we introduce process $(M_t^r)_{t\ge0}$ which is constructed using the graphical construction. Each of these processes is a process in which vertices are occupied by particles of various colours, and we initialise them all with the initial configuration of the chameleon process. Prior to time $\hat\tau_r$, process $M^r$ evolves exactly as the chameleon process; at and after time $\hat\tau_r$ it evolves as $M^*$ (so in particular there are no more changes to the colours of particles). Note that in all these processes the black particles have the same trajectory and this matches the trajectory of the non-marked particles in the MaBB. Note also that $M^1$ is identical to $M^*$. We shall prove by induction on $r$ that for all $r\ge1$, 
\begin{align}\label{e:MaBBtoMr}
\forall\,t>0,\,y\in V\quad
\mathds{P}(m_t=y\mid\xi_t)=\mathds{E}^{M^r}\left[\frac{\ink_t^{(\xi,x)}(y)}{\chi(\xi(x))}\bigm| B^\mathrm{C}_t\right]\quad\mbox{a.s}.
\end{align}
This will prove the proposition since the chameleon process is the almost sure limit of $M^r$ as $r\to\infty$.

The case $r=1$ follows from Lemma~\ref{L:MipstoM*} since  $\ink_0^{(\xi,x)}(y)=\chi(\xi(x))$ if $y=x$ and otherwise $\ink_0^{(\xi,x)}(y)=0$ (thus the assumption of the lemma holds).

We fix $r'\in\mathds{N}_0$, assume \eqref{e:MaBBtoMr} holds for $r=r'$ and show it holds for $r=r'+1$. 
Observe that before time $\hat\tau_{r'}$, $M^{r'+1}=M^{r'}$ so for $t<\hat\tau_{r'}$, for all $y$, almost surely
\begin{align*}
\mathds{P}(m_t=y\mid\xi_t)&=\mathds{E}^{M^{r'}}\left[\frac{\ink_t^{(\xi,x)}(y)}{\chi(\xi(x))}\bigm| B^\mathrm{C}_t\right]=\mathds{E}^{M^{{r'}+1}}\left[\frac{\ink_t^{(\xi,x)}(y)}{\chi(\xi(x))}\bigm| B^\mathrm{C}_t\right].
\end{align*}
After time $\hat\tau_{r'}$, $M^{r'+1}$ evolves as $M^*$; so assuming that for all $y\in V$,
\begin{align}\label{e:mipsmr}
\mbox{a.s.}\quad\mathds{P}(m_{\hat\tau_{r'}}=y\mid\xi_{\hat\tau_{r'}})=\mathds{E}^{M^{r'+1}}\left[\frac{\ink^{(\xi,x)}_{\hat\tau_r'}(y)}{\chi(\xi(x))}\bigm| B^\mathrm{C}_{\hat\tau_{r'}}\right],
\end{align}
then by Lemma~\ref{L:MipstoM*} we have that for all $t>\hat\tau_{r'}$, for all $y\in V$,
\[
 \mbox{a.s.}\quad\mathds{P}(m_{t}=y\mid(\xi_s)_{\hat\tau_{r'}\le s\le t})=\mathds{E}^{M^{r'+1}}\left[\frac{\ink^{(\xi,x)}_{t}(y)}{\chi(\xi(x))}\bigm| (B^\mathrm{C}_{s})_{\hat\tau_{r'}\le s\le t}\right].
\]
The inductive step is then complete by taking an expectation and using that black particles have the same trajectory as the non-marked, almost surely. Thus it remains to prove \eqref{e:mipsmr}. We fix $y\in V$ and decompose according to three events, which partition the probability space:
\begin{itemize}
\item $E_1:=\bigcup_{i\ge1}\{y\notin \hat e_{r'}\}\cap\{\hat\tau_{r'}=\tau_i\}$ (the update is not a depinking time and $y$ is not on the ringing edge)
\item $E_2:=\bigcup_{i\ge1}\{y\in \hat e_{r'}\}\cap\{\hat\tau_{r'}=\tau_i\}$ (the update is not a depinking time but $y$ is on the ringing edge)
\item $E_3:=\bigcup_{i\ge1}\{\hat\tau_r=D_i\}$ (the update  is a depinking time) 
\end{itemize}
On event $E_1$, as $y$ is not on a ringing edge at time $\hat\tau_{r'}$, the value of $\ink_t^{(\xi,x)}(y)$ does not change at time $\hat\tau_{r'}$ in either of the processes $M^{r'}$ or $M^{r'+1}$; since they agree prior to this time, we deduce that almost surely
\begin{align}\label{e:E1}
\mathds{E}^{M^{r'+1}}\left[\frac{\ink^{(\xi,x)}_{\hat\tau_{r'}}(y)}{\chi(\xi(x))}\indic{E_1}\bigm| B^\mathrm{C}_{\hat\tau_{r'}}\right]=\mathds{E}^{M^{r'}}\left[\frac{\ink^{(\xi,x)}_{\hat\tau_{r'}}(y)}{\chi(\xi(x))}\indic{E_1}\bigm| B^\mathrm{C}_{\hat\tau_{r'}}\right].
\end{align}
On event $E_2$, we may pinken some particles at time $\hat\tau_{r'}$ in process $M_{r'+1}$. Nevertheless, by Lemmas~\ref{L:onestep} and~\ref{L:onestepM*} (and again since the processes agree prior to this time), we see that their expected ink values agree, i.e.
\begin{align}\label{e:E2}
\mathds{E}^{M^{r'+1}}\left[\frac{\ink_{\hat\tau_{r'}}(y)}{\chi(\xi(x))}\indic{E_2}\bigm| B^\mathrm{C}_{\hat\tau_{r'}}\right]=\mathds{E}^{M^{r'}}\left[\frac{\ink_{\hat\tau_{r'}}(y)}{\chi(\xi(x))}\indic{E_2}\bigm| B^\mathrm{C}_{\hat\tau_{r'}}\right].
\end{align}
Finally, on event $E_3$, $M^{r'}$ does not update. On the other hand, almost surely
\begin{align}
&\mathds{E}^{M^{r'+1}}\left[\frac{\ink_{\hat\tau_{r'}}(y)}{\chi(\xi(x))}\indic{E_3}\bigm| B^\mathrm{C}_{\hat\tau_{r'}}\right]\notag\\&=\sum_{i=1}^\infty\mathds{E}^{M^{r'+1}}\left[\frac{\ink_{\hat\tau_{r'}}(y)}{\chi(\xi(x))}\indic{\hat\tau_{r'}=D_i}\bigm| B^\mathrm{C}_{\hat\tau_{r'}}\right]\notag\\
&=\sum_{i=1}^\infty\mathds{E}^{M^{r'+1}}\left[\left\{\indic{d_i=1}\left(\frac{R^\mathrm{C}_{\hat\tau_{r'-1}}(y)+P^\mathrm{C}_{\hat\tau_{r'-1}}(y)}{\chi(\xi(x))}\right)+\indic{d_i=0}\frac{R^\mathrm{C}_{\hat\tau_{r'-1}}(y)}{\chi(\xi(x))}\right\}\indic{\hat\tau_{r'}=D_i}\bigm| B^\mathrm{C}_{\hat\tau_{r'}}\right]\notag\\
&=\sum_{i=1}^\infty\mathds{E}^{M^{r'+1}}\left[\frac{R^\mathrm{C}_{\hat\tau_{r'-1}}(y)+\frac12P^\mathrm{C}_{\hat\tau_{r'-1}}(y)}{\chi(\xi(x))}\indic{\hat\tau_{r'}=D_i}\bigm| B^\mathrm{C}_{\hat\tau_{r'}}\right]\notag\\
&=\mathds{E}^{M^{r'+1}}\left[\frac{R^\mathrm{C}_{\hat\tau_{r'-1}}(y)+\frac12P^\mathrm{C}_{\hat\tau_{r'-1}}(y)}{\chi(\xi(x))}\indic{E_3}\bigm| B^\mathrm{C}_{\hat\tau_{r'}}\right]\notag\\
&=\mathds{E}^{M^{r'+1}}\left[\frac{\ink_{\hat\tau_{r'-1}}(y)}{\chi(\xi(x))}\indic{E_3}\bigm| B^\mathrm{C}_{\hat\tau_{r'}}\right]\notag\\
&=\mathds{E}^{M^{r'}}\left[\frac{\ink_{\hat\tau_{r'}}(y)}{\chi(\xi(x))}\indic{E_3}\bigm| B^\mathrm{C}_{\hat\tau_{r'}}\right].\label{e:E3}
\end{align}
Putting together equations \eqref{e:E1}--\eqref{e:E3} and using that $E_1,E_2,E_3$ form a partition, we obtain that for each $y\in V$, 
\[
\mbox{a.s.}\quad \mathds{E}^{M^{r'+1}}\left[\frac{\ink_{\hat\tau_{r'}}(y)}{\chi(\xi(x))}\bigm| B^\mathrm{C}_{\hat\tau_{r'}}\right]=\mathds{E}^{M^{r'}}\left[\frac{\ink_{\hat\tau_{r'}}(y)}{\chi(\xi(x))}\bigm| B^\mathrm{C}_{\hat\tau_{r'}}\right],
\]and thus by the inductive hypothesis, we have shown \eqref{e:mipsmr}.
\end{proof}

Next, we show how Proposition~\ref{P:MaBBtoC} can be used to bound the total variation distance between two MaBB configurations in terms of the total amount of ink in the chameleon process.

Recall from~\eqref{e:pixi} the law $\pi_\zeta$ for $\zeta\in \Omega_{G,m-1}$ and denote by $\tilde m_t$ a random variable which, conditionally on $\xi_t=\zeta$, has law $\pi_{\zeta}$. Recall also the definition of event Fill from Lemma~\ref{L:inkmart}.
\begin{proposition}\label{P:tvboundink}
Let $(\xi_t,m_t)$ denote the time-$t$ configuration of a MaBB initialised at $(\xi,x)\in\Omega'_{G,m}$. For any $t>0$,
\[
\|\mathcal{L}((\xi_t,m_t))-\mathcal{L}((\xi_t,\tilde m_t))\|_\mathrm{TV}\le 1-\,\mathds{E}\left[\frac{\ink_t^{(\xi,x)}}{a(m-1)+bn}\mid\mathrm{Fill}\right].
\]
\end{proposition}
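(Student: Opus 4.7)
The plan is to work directly from the standard identity
\[
\|\mathcal{L}((\xi_t,m_t))-\mathcal{L}((\xi_t,\tilde m_t))\|_\mathrm{TV}=1-\sum_{(\zeta,y)\in\Omega'_{G,m}}\min\big(P(\zeta,y),Q(\zeta,y)\big),
\]
where $P(\zeta,y):=\mathds{P}((\xi_t,m_t)=(\zeta,y))$ and $Q(\zeta,y):=\mathds{P}((\xi_t,\tilde m_t)=(\zeta,y))$. It therefore suffices to exhibit a pointwise lower bound on $\min(P,Q)$ whose sum over $(\zeta,y)$ recovers the right-hand side of the proposition.

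First I would rewrite both $P$ and $Q$ in terms of the chameleon process. For $P$, Proposition~\ref{P:MaBBtoC} gives directly
\[
P(\zeta,y)=\mathds{E}\!\left[\frac{\ink_t^{(\xi,x)}(y)}{\chi(\xi(x))}\indic{B_t^\mathrm{C}=\zeta}\right].
\]
For $Q$, I would use that $\xi_t=B_t^\mathrm{C}$ almost surely (they are driven by the same $\{U_r^b\}$), together with the explicit form $\pi_\zeta(y)=\chi(\zeta(y))/(a(m-1)+bn)$ from \eqref{e:condpimips}, to write
\[
Q(\zeta,y)=\mathds{P}(B_t^\mathrm{C}=\zeta)\frac{\chi(\zeta(y))}{a(m-1)+bn}.
\]

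The key pointwise inequality to establish is the claim that
\[
\min\big(P(\zeta,y),Q(\zeta,y)\big)\ge\frac{\mathds{E}\!\left[\ink_t^{(\xi,x)}(y)\indic{B_t^\mathrm{C}=\zeta}\,\big|\,\mathrm{Fill}\right]}{a(m-1)+bn}.
\]
To bound the right-hand side by $P(\zeta,y)$, I would simply condition on Fill and drop the contribution from $\mathrm{Fill}^c$ in the expectation defining $P$, using $\mathds{P}(\mathrm{Fill})=\chi(\xi(x))/(a(m-1)+bn)$ from Lemma~\ref{L:inkmart}. To bound it by $Q(\zeta,y)$, I would apply Lemma~\ref{L:limits} to replace $\ink_t^{(\xi,x)}(y)$ by $\chi(\zeta(y))$ on $\{B_t^\mathrm{C}=\zeta\}$, and then invoke Corollary~\ref{cor:fill} to identify $\mathds{P}(B_t^\mathrm{C}=\zeta\mid\mathrm{Fill})=\mathds{P}(B_t^\mathrm{C}=\zeta)$.

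Finally, summing the pointwise bound over $(\zeta,y)$, the right-hand side telescopes to $\mathds{E}[\ink_t^{(\xi,x)}\mid\mathrm{Fill}]/(a(m-1)+bn)$, which after substitution into the total variation identity yields the proposition. I do not expect a genuine obstacle here: once Proposition~\ref{P:MaBBtoC}, Corollary~\ref{cor:fill} and Lemma~\ref{L:limits} are in hand, the argument is essentially bookkeeping, with the only subtlety being to keep track of the normalising factor $\chi(\xi(x))$ and the independence (guaranteed by Corollary~\ref{cor:fill}) between the black particle trajectory and the event Fill.
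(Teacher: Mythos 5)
Your proposal is correct and is essentially the paper's own argument: the paper bounds $\sum_{\zeta,y}(\mathds{P}(\xi_t=\zeta,\tilde m_t=y)-\mathds{P}(\xi_t=\zeta,m_t=y))_+$ while you bound $1-\sum_{\zeta,y}\min(P,Q)$, but these are the same identity for total variation, and the pointwise estimate is identical in both cases --- the common lower bound $\mathds{E}[\ink_t^{(\xi,x)}(y)\indic{B_t^\mathrm{C}=\zeta}\mid\mathrm{Fill}]/(a(m-1)+bn)$ obtained from Proposition~\ref{P:MaBBtoC}, Lemma~\ref{L:limits}, Corollary~\ref{cor:fill} and $\mathds{P}(\mathrm{Fill})=\chi(\xi(x))/(a(m-1)+bn)$. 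No gap.
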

\begin{proof}
This is similar to the proof of Lemma 8.1 of \citet{olive}. 

Recall from~\eqref{e:pixi} the law $\pi_\zeta$ for $\zeta\in \Omega_{G,m-1}$ and denote by $\tilde m_t$ a random variable which, conditionally on $\xi_t=\zeta$, has law $\pi_{\zeta}$. Recall also the definition of event Fill from Lemma~\ref{L:inkmart}.

By Proposition~\ref{P:MaBBtoC}, for any $(\zeta,y)\in\Omega'_{G,m}$,
\[
\mathds{P}(\xi_t=\zeta,m_t=y)=\mathds{E}\left[\frac{\ink_t^{(\xi,x)}(y)}{\chi(\xi(x))}\indic{B^\mathrm{C}_t=\zeta}\right]\ge\mathds{E}\left[\frac{\ink_t^{(\xi,x)}(y)}{\chi(\xi(x))}\indic{\{B^\mathrm{C}_t=\zeta\}\cap\mathrm{Fill}}\right].
\]
On the other hand, using that $B^\mathrm{C}_t$ and $\xi_t$ have the same distribution and Corollary~\ref{cor:fill},
\[
\mathds{P}(\xi_t=\zeta, \tilde m_t=y)=\pi_\zeta(y)\mathds{P}(\xi_t=\zeta)=\frac{\pi_\zeta(y)}{\mathds{P}(\mathrm{Fill})}\mathds{P}(\{B^\mathrm{C}_t=\zeta\}\cap\mathrm{Fill}).
\]
We deduce that 
\begin{equation}
\begin{aligned}\label{e:pospart}
&(\mathds{P}(\xi_t=\zeta,\,\tilde m_t=y)-\mathds{P}(\xi_t=\zeta,\,m_t=y))_+\\&\le \left(\mathds{E}\left[\indic{\{B^\mathrm{C}_t=\zeta\}\cap\mathrm{Fill}}\left(\frac{\pi_\zeta(y)}{\mathds{P}(\mathrm{Fill})}-\frac{\ink_t^{(\xi,x)}(y)}{\chi(\xi(x))}\right)\right]\right)_+.
\end{aligned}
\end{equation}
Observe that on event $\{B^\mathrm{C}_t=\zeta\}$, we have $\ink^{(\xi,x)}_t(y)\le \chi(\zeta(y))$ by Lemma~\ref{L:limits}, and so (on this event),
\[
\frac{\ink^{(\xi,x)}_t(y)}{\chi(\xi(x))}\le\frac{\chi(\zeta(y))}{\chi(\xi(x))}=\frac{\chi(\zeta(y))}{(a(m-1)+bn)\mathds{P}(\mathrm{Fill})}=\frac{\pi_\zeta(y)}{\mathds{P}(\mathrm{Fill})},
\]
where the first equality is due to Corollary~\ref{cor:fill} and the second from the definition of the colour function $\chi$. As a result we deduce from~\eqref{e:pospart} that
\begin{align*}
(\mathds{P}(\xi_t=\zeta,\,\tilde m_t=y)-\mathds{P}(\xi_t=\zeta,\,m_t=y))_+\le\mathds{E}\left[\indic{\{B^\mathrm{C}_t=\zeta\}\cap\mathrm{Fill}}\left(\frac{\pi_\zeta(y)}{\mathds{P}(\mathrm{Fill})}-\frac{\ink^{(\xi,x)}_t(y)}{\chi(\xi(x))}\right)\right].
\end{align*}

We take a sum over $y$ followed by $\zeta$ to obtain
\begin{align*}
\|\mathcal{L}((\xi_t,m_t))-\mathcal{L}((\xi_t,\tilde m_t))\|_\mathrm{TV}&\le\mathds{E}\left[\indic{\mathrm{Fill}}\left(\frac{1}{\mathds{P}(\mathrm{Fill})}-\frac{\ink^{(\xi,x)}_t}{\chi(\xi(x))}\right)\right]\\
&=1-\mathds{P}(\mathrm{Fill})\,\mathds{E}\left[\frac{\ink_t^{(\xi,x)}}{\chi(\xi(x))}\mid\mathrm{Fill}\right]\\
&=1-\,\mathds{E}\left[\frac{\ink_t^{(\xi,x)}}{a(m-1)+bn}\mid\mathrm{Fill}\right],
\end{align*}
using Lemma~\ref{L:inkmart} in the last step.
\end{proof}

Recall from Section~\ref{SS:evol} that for each $\ell\in\mathds{N}$,  $\widehat{\ink}_\ell$ denotes the value of ink just after the $\ell$th depinking time. We write $\widehat{\ink}_\ell^{(\xi,x)}$ to emphasise the dependence on the initial configuration of the corresponding MaBB.
\begin{lemma}\label{L:expdec}
Fix $(\xi,x)\in\Omega'_{G,m}$. For each $\ell\ge1$,
\[
1-\mathds{E}\left[\frac{\widehat{\ink}_\ell^{(\xi,x)}}{a(m-1)+bn}\mid\mathrm{Fill}\right]\le (71/72)^\ell\sqrt{a(m-1)+bn}.
\]
\end{lemma}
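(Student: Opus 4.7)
Let $N:=a(m-1)+bn$, abbreviate $k_j:=\widehat{\ink}_j^{(\xi,x)}$, and set $X_j:=k_j(N-k_j)$. My plan is to translate the conditional expectation into an unconditional one via an optional-stopping identity, then to deduce exponential decay from the one-step evolution of $X_j$. By Lemma~\ref{L:inkmart} the process $(k_j)$ is a bounded martingale absorbed in $\{0,N\}$, and optional stopping gives $\mathds{P}(\mathrm{Fill}\mid k_j)=k_j/N$. Hence $\mathds{E}[k_\ell\mathds{1}_{\mathrm{Fill}}]=\mathds{E}[k_\ell^2]/N$. Combining this with the martingale identity $\mathds{E}[k_\ell]=k_0$ and the pointwise identity $k_\ell^2=Nk_\ell-X_\ell$, I obtain
\[
1-\mathds{E}[k_\ell/N\mid\mathrm{Fill}]=\frac{\mathds{E}[X_\ell]}{Nk_0},
\]
so it suffices to prove $\mathds{E}[X_\ell]\le(71/72)^\ell N^{3/2}k_0$.

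From Lemma~\ref{L:inkmc} one has $k_{j+1}=k_j\pm\Delta(k_j)$ each with probability $1/2$, where $\Delta(k)=\lceil\min(k,N-k)/3\rceil$. A direct expansion of $X_{j+1}=k_{j+1}(N-k_{j+1})$ produces the clean identity $\mathds{E}[X_{j+1}\mid k_j]=X_j-\Delta(k_j)^2$. The crucial estimate is the quadratic lower bound $\Delta(k)^2\ge\min(k,N-k)^2/9\ge X_j^2/(9N^2)$, where the second step uses $\min(k,N-k)\ge k(N-k)/N=X_j/N$ (as $\max(k,N-k)\le N$). Applying Jensen's inequality gives the recursion
\[
\mathds{E}[X_{j+1}]\le\mathds{E}[X_j]\Bigl(1-\mathds{E}[X_j]/(9N^2)\Bigr).
\]
Since $\mathds{E}[X_0]\le N^2/4$, whenever $\mathds{E}[X_j]\ge N^2/8$ the contraction factor is at most $71/72$, yielding the clean geometric decay $\mathds{E}[X_{j+1}]\le(71/72)\mathds{E}[X_j]$ in this ``fast'' regime.

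The main obstacle is the regime $\mathds{E}[X_j]<N^2/8$, where the contraction factor degrades to $1-O(1/N)$. This is exactly why the target bound carries a $\sqrt{N}$ prefactor. My plan for closing the induction is to exploit this slack as follows: the trivial estimate $1-\mathds{E}[k_\ell/N\mid\mathrm{Fill}]\le 1$ already matches the claim whenever $(71/72)^\ell\sqrt{N}\ge 1$, i.e.\ for $\ell$ up to a logarithmic threshold in $N$; for larger $\ell$ one iterates the recursion above and combines it with a Cauchy--Schwarz estimate of the form $\mathds{E}[X_\ell]\le\sqrt{\mathds{E}[X_\ell^2]}\le\sqrt{(N^2/4)\mathds{E}[X_\ell]}$, which absorbs the deficit into the $\sqrt{N}$ factor. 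Carrying out this interpolation uniformly in $k_0$---particularly when $k_0$ is small compared with $\sqrt{N}$, so that $\mathds{E}[X_0]\le k_0 N\ll N^2/8$ and the fast regime never applies---is the delicate point, and I expect it to require a case split on the relative sizes of $k_0$, $\sqrt{N}$, and the threshold $N^2/8$.
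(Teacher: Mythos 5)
Your reduction is correct and is in fact equivalent to the paper's starting point: writing $N=a(m-1)+bn$ and $k_\ell=\widehat{\ink}_\ell^{(\xi,x)}$, the optional-stopping identity $\Prob(\mathrm{Fill}\mid k_\ell)=k_\ell/N$ together with $\E[k_\ell]=k_0$ does give
$1-\E[k_\ell/N\mid\mathrm{Fill}]=\E[k_\ell(N-k_\ell)]/(Nk_0)$, and your one-step computation $\E[X_{j+1}\mid k_j]=X_j-\Delta(k_j)^2$ is also right. The gap is in the passage from there to exponential decay. The scalar recursion $\E[X_{j+1}]\le\E[X_j]\bigl(1-\E[X_j]/(9N^2)\bigr)$ is genuinely too weak: setting $y_j=1/\E[X_j]$ it yields only $y_\ell\ge y_0+\ell/(9N^2)$, i.e.\ $\E[X_\ell]=O(N^2/\ell)$, which is polynomial in $\ell$ while the target $(71/72)^\ell N^{3/2}k_0$ is exponentially small once $\ell\gg\log N$. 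No manipulation of this recursion can recover the exponential rate, because the recursion (taken as an equality) admits solutions decaying exactly like $9N^2/\ell$. Your proposed patch does not close this: the Cauchy--Schwarz step $\E[X_\ell]\le\sqrt{(N^2/4)\,\E[X_\ell]}$ is equivalent to the trivial bound $\E[X_\ell]\le N^2/4$ and absorbs nothing. The case $k_0$ small (e.g.\ $k_0=b$, which does occur) makes the failure stark: then $\E[X_0]\le k_0N\ll N^2/8$, the ``fast'' regime is never entered, and your bounds give no decay at all beyond $O(N^2/\ell)$.

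The loss occurs at the Jensen step: the true reason $\E[X_\ell]$ decays exponentially is not visible from $\E[X_j]$ alone. The argument the paper is invoking (Proposition 6.1 of \cite{olive}) instead works with the chain \emph{conditioned on} $\mathrm{Fill}$, which by your own identity $\Prob(\mathrm{Fill}\mid k_j)=k_j/N$ is the Doob $h$-transform with $h(k)=k$, so that $\hat{\Prob}(k_{j+1}=k\pm\Delta\mid k_j=k)=(k\pm\Delta)/(2k)$. One then checks that $g(k):=\sqrt{(N-k)/k}$ contracts uniformly under this transformed kernel: with $u=\Delta/k$ and $v=\Delta/(N-k)$,
\begin{align*}
\frac{\hat{\E}[g(k_{j+1})\mid k_j=k]}{g(k)}=\tfrac12\left[\sqrt{(1+u)(1-v)}+\sqrt{(1-u)(1+v)}\right]\le\sqrt{1-\tfrac14(u+v)^2},
\end{align*}
and $u+v=\Delta N/(k(N-k))\ge\tfrac13$ since $\Delta\ge\min(k,N-k)/3\ge k(N-k)/(3N)$, giving a factor $\sqrt{35/36}\le 71/72$ \emph{at every step, uniformly in $k$}. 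Iterating and using $(N-k)/N\le g(k)$ and $g(k_0)\le\sqrt{N}$ yields the lemma. The essential point you are missing is that the uniform per-step contraction only appears for a function like $\sqrt{(N-k)/k}$ under the conditioned law (or equivalently for $\sqrt{k(N-k)}$ weighted appropriately), not for the unconditioned mean of $k(N-k)$; to repair your proof you would need to replace the recursion for $\E[X_j]$ by a supermartingale estimate of this type.
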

We omit the proof (which uses Lemma~\ref{L:inkmc}) of this result since it is identical to the proof of Proposition 6.1 in \cite{olive}, except that here ink can take values in $\{0,\ldots,a(m-1)+bn\}$ (in contrast with \cite{olive} in which $\ink\in\{0,\ldots,n\}$).

\section{Expected loss of red in a round}\label{s:loss}
In this section we show that during a single round (which starts with fewer red particles than white) the number of red particles decreases in expectation by a constant factor. 

Let $M_{i,j}(G)$ denote the meeting time of two independent random walks started from vertices $i$ and $j$ on $G$ and recall that $\hat M_{i,j}(G)$ denotes the meeting time of two independent random walks started from vertices $i$ and $j$ on the graph obtained from $G$ by halving the edge-weights, that is, $\hat M_{i,j}(G)=M_{i,j}(\hat G)$.

Consider a slight modification to the chameleon process in which we replace the number of selected particles~\eqref{e:cond} in Step 2 with $k$, that is, we allow all paired reds particles to be pinkened. We call this the \emph{modified} chameleon process.

\begin{proposition}\label{P:lossred}
Suppose the modified chameleon process  starts a round with red configuration $R$, white configuration $W$ and black configuration $B$ such that $|R|\le |W|$. If the round length $T$ satisfies $T\ge 2\max_{i,j}\mathds{E}\hat M_{i,j}(G)$ then $\mathds{E}[|R_{T-}^\mathrm{C}|]\le (1-c)|R|$, with $c=\frac{(p^*)^2}{4a}$.
\end{proposition}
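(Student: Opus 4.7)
The plan is to fix, for each of the $|R|$ labelled red-white pairs, a lower bound on the probability that the red in the pair is pinkened during the round; since $|R|\le|W|$ ensures every red is paired with a distinct white, linearity of expectation then yields the bound on $\mathds{E}[|R_{T-}^{\mathrm{C}}|]$.

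For a fixed labelled pair, Lemma~\ref{L:beforemeet} says the two particles are jointly distributed as two independent random walks on $\hat G$ prior to their meeting time, so the meeting time has expectation at most $\max_{i,j}\mathds{E}\hat M_{i,j}(G)$. Since $T\ge 2\max_{i,j}\mathds{E}\hat M_{i,j}(G)$, Markov's inequality yields that the pair meets before time $T$ with probability at least $1/2$. If the meeting occurs at the ring of edge $e_r=\{v,w\}$ at time $\tau_r<T$ (say with the red on $v$ and the white on $w$), the uniforms $U^b_r, U^c_r$ controlling this ring are fresh and independent of the prior trajectories.

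I would then analyze the conditional probability of pinkening at this ring. On the event $\{U^b_r\le p^*\}$ (of conditional probability $p^*$), Property~\ref{a:prob} combined with~\eqref{e:ordering} and~\eqref{e:ulesspimplication} delivers the near-even split $B'(v), B'(w)\in[\tfrac13(B(v)+B(w)),\tfrac23(B(v)+B(w))]$ in the main case $B(v)+B(w)\ge 2$. A short computation from the beta-binomial PMF (or equivalently from Property~\ref{a:MaBBindep}) gives $\mathrm{P}_{e,B,B'}(v,v)=\chi(B'(v))/(\chi(B'(v))+\chi(B'(w)))$, whence the near-even split forces $\chi(B'(v))/\chi(B'(w))\in[\tfrac12,2]$ and $\mathrm{P}_{e,B,B'}(v,v),\mathrm{P}_{e,B,B'}(w,w)\in[\tfrac13,\tfrac23]$. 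Lemma~\ref{L:pairres} (with $\gamma=\tfrac12$) then gives that the paired red survives Step 1 with probability at least $\tfrac12$, and Lemma~\ref{L:thetabounds} (with $\eta=\tfrac13$) gives $\theta(v)\in[\tfrac13,\tfrac23]$, so Step 3a is reached with probability $2[\theta(v)\wedge(1-\theta(v))]\ge\tfrac23$. These two events are independent by the construction of the chameleon process, and in the modified process every paired red in the pool is pinkened during Step 3a; thus the paired red is pinkened in the round with unconditional probability at least $\tfrac12\cdot p^*\cdot\tfrac12\cdot\tfrac23=p^*/6$, which exceeds $(p^*)^2/(4a)$.

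The main technical obstacle will be handling the edge cases $B(v)+B(w)\in\{0,1\}$, where Property~\ref{a:prob} does not directly apply. When $B(v)+B(w)=0$ the black update is trivial and the ratios appearing in Lemmas~\ref{L:pairres} and~\ref{L:thetabounds} are all equal to $1$; when $B(v)+B(w)=1$ the black update is symmetric and a direct computation shows those ratios are bounded below by $b/(a+2b)\ge 1/(3a)$, which is the source of the factor $1/a$ in the final constant $c=(p^*)^2/(4a)$. Combining these with the main-case analysis via a single worst-case bound, and then summing over the $|R|$ labelled pairs by linearity of expectation, yields $\mathds{E}[|R_{T-}^{\mathrm{C}}|]\le(1-c)|R|$ with $c=(p^*)^2/(4a)$.
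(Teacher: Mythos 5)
Your proposal follows essentially the same route as the paper's proof: reduce to per-pair first-meeting events, bound the meeting probability via Lemma~\ref{L:beforemeet} and Markov's inequality, condition on the event $\{U^b\le p^*\}$ to get a near-even black split, and then invoke Lemma~\ref{L:pairres} for survival of the paired red through Step 1 and Lemma~\ref{L:thetabounds} for the probability of reaching Step 3a, using the independence of the Step 1 and Step 2 randomness and linearity of expectation. One genuinely nice feature of your write-up is the closed form $\mathrm{P}_{e,B,B'}(v,v)=\chi(B'(v))/(\chi(B'(v))+\chi(B'(w)))$, which is correct (it follows from the beta-binomial identities used to prove Property~\ref{a:MaBBindep}) and lets you verify the hypotheses of Lemmas~\ref{L:pairres} and~\ref{L:thetabounds} in one line where the paper does a more laborious computation. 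The one gap is the case of a labelled red--white pair that starts the round on the \emph{same} vertex: Lemma~\ref{L:beforemeet} is stated only for distinct starting vertices, so your blanket claim that each pair meets before time $T$ with probability at least $1/2$ is not justified there. The paper handles this by defining $M^r$ for such a pair as the first ring of an edge incident to that vertex and bounding $\mathds{P}(M^r<T)\ge\tfrac12\mathds{P}(M_{v,w}(\hat G)<T)\ge\tfrac14$, which is why its final meeting-probability constant is $1/4$ rather than $1/2$. Since your per-pair pinkening bound of $p^*/6$ (and the corresponding worst-case bound in the $B(v)+B(w)\le1$ regime) comfortably exceeds $(p^*)^2/(4a)$, the extra factor of $2$ is absorbed and the conclusion survives, but this case does need to be addressed explicitly.
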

\begin{remark}If instead  $|W|\le |R|$ then we have an equivalent result:  $\mathds{E}[|W_{T-}^\mathrm{C}|]\le (1-c)|W|$.
\end{remark}
\begin{proof}
We shall only count pinkenings between paired red and white particles which get coloured pink the first time they meet (if they do) during the round. (This means that we do not have to worry about how the particles move after their first meeting time -- they no longer move independently once they meet.)

Since we assume $|R|\le |W|$, all red particles will have a label in $\{1,\ldots,|R|\}$. Let $M^r$ denote the meeting time of red particle with label $r$ with its paired white particle; this is the first time the two particles are on the same ringing edge. If two paired particles start the round on the same vertex we set their meeting time to be the first time this vertex is on a ringing edge. For each $s\in\mathds{N}$ write $F_s(r)$ for the event that a red particle with label $r$ remains in the pooled pile after Step 1 of the update at time $\tau_s$ (if red particle with label $r$ is not on edge $e_r$ at time $\tau_r-$, we set $F_r(s)=\varnothing$), and write $G_s$ for the event that we do Step 3a (rather than Step 3b) at the update at time $\tau_s$. 

We also write $e_s^1, \,e_s^2$ for the two vertices on edge $e_s$ (in an arbitrary order), $u_s(e_s^1)$ and $\ell_s(e_s^1)$ for the values of $u(e_s^1)$ and $\ell(e_s^2)$ at the update at time $\tau_s$, and $\theta_s(e_s^1)$ for the probability $\theta(e_s^1)$ at the update time $\tau_s$. 

We lower-bound the expected number of pink particles created during a single round (which has length $T$) of the modified chameleon process in which at the start of the round the configuration of red particles is $R$ by
\begin{align*}
2\mathds{E}\left[\sum_{r=1}^{|R|}\sum_{s=1}^\infty\indic{M^{r}=\tau_s<T}\indic{F_s(r)}\indic{G_s}\right]&=2\sum_{r=1}^{|R|}\sum_{s=1}^\infty\mathds{E}\left[\indic{M^r=\tau_s<T}\mathds{P}(F_s(r)\cap G_s\mid \tau_s, M^r)\right].
\end{align*}

Observe that conditionally on the configuration of the chameleon process at time $\tau_s-$ and the configuration of black particles at time $\tau_r$,  $F_s(r)$ and $G_s$ are independent since $F_s(r)$ depends further only on the randomness at Step 1, and $G_s$ the randomness at Step 2 (and we have constructed the chameleon process so that these are independent). Therefore we have almost surely
\begin{align}\label{e:FintG}
\mathds{P}(F_s(r)\cap G_s\mid \tau_s, M^r)=\mathds{E}[2 (\theta_s(e_s^1)\wedge (1-\theta_s(e_s^1))\indic{F_s(r)}\mid \tau_s, M^r].
\end{align}

Next, for each $s\in\mathds{N}$, we introduce an event $A_s$  which:
\begin{enumerate}
\item has probability $p^*$ (recall this constant comes from Property~\ref{a:prob}),
\item prescribes only the value that $U_s^b$ takes,
\item on event $A_s$, for each $i\in\{1,2\}$, given $e_s$ and $B^\mathrm{C}_{\tau_s-}$, configuration $B^\mathrm{C}_{\tau_s}$ satisfies almost surely \begin{enumerate}
\item  $\mathrm{P}_{e_s,B^\mathrm{C}_{\tau_s-},B^\mathrm{C}_{\tau_s}}(e_s^i,e_s^i)\in[p^*\wedge\frac29,(1-p^*)\vee\frac79]$,
\item  $\chi(B^\mathrm{C}_{\tau_s}(e_s^1))/\chi(B^\mathrm{C}_{\tau_s}(e_s^2))\in [1/(2a),2a]$.
\end{enumerate}
\end{enumerate} We suppose for now that such an event exists.
As $A_s$ only prescribes $U_s^b$, it is independent of events $\{M^r=\tau_s\}$ and $\{\tau_s<T\}$ (which do not depend on $U_s^b$). Thus  from~\eqref{e:FintG} and by Lemma~\ref{L:thetabounds} we have
\[
\mathds{P}(F_s(r)\cap G_s\mid \tau_s, M^r)\ge 2(\frac29\wedge p^*) p^*\, \mathds{P}(F_s(r)\mid \tau_s, M^r, A_s)\ge\frac43(p^*)^2\,\mathds{P}(F_s(r)\mid \tau_s, M^r, A_s),
\]using $p^*<1/3$.
We also have that  $\mathds{P}(F_s(r)\mid \tau_s, M^r, A_s)\ge 1/(2a)$ almost surely. This follows from Lemma~\ref{L:pairres} by first conditioning on the configuration of the chameleon process at time $\tau_s-$, since given this, $\indic{F_s(r)}$ is independent of $M^r$ and $\tau_s$. Thus
our lower-bound on the expected number of pink particles created becomes
\[
\frac{4}{3a}(p^*)^2 \sum_{r=1}^{|R|}\sum_{s=1}^\infty\mathds{P}\left(M^r=\tau_s<T\right)\ge \frac{(p^*)^2}{a}|R|\min_r\mathds{P}(M^r<T).
\]
For a red and white pair (with label $r$) on the same vertex $v$, say, at the start of the round, $M^r$ is the first time $v$ is on a ringing edge. Suppose $w$ is a neighbour of $v$ (chosen arbitrarily) and recall $M_{v,w}(\hat G)$ is the meeting time of two random walks  on $\hat G$ started from vertices $v$ and $w$ respectively. Then $\mathds{P}(M^r< T)\ge\frac12\mathds{P}(M_{v,w}< T)$, since for the random walks to meet, vertex $v$ must be on a ringing edge for at least one of the two random walk processes. Then by Markov's inequality, we have in this case that $\mathds{P}(M^r< T)\ge\frac12(1-\max_{i,j}\mathds{E}M_{i,j}(\hat G)/T)$. On the other hand, for a red and white pair which start the round on different vertices, we can directly apply Markov's inequality  to obtain $\mathds{P}(M^r< T)\ge 1-\max_{i,j}\mathds{E}M_{i,j}(\hat G)/T$.

Thus if $T\ge2 \max_{i,j}\mathds{E}\hat M_{i,j}(G)$ then  for any $r$, $\mathds{P}(M^r< T)\ge 1/4$, which shows that statement of the proposition with $c=\frac{(p^*)^2}{4a}$.

It remains to demonstrate the existence of the event $A_s$ for each $s\in\mathds{N}$.
We set $A_s=\{U_s^b\le p^*\},$ which clearly has probability $p^*$ and only prescribes the value that $U_s^b$ takes. 
Recall from the discussion in Section~\ref{S:graphical} (in particular~\eqref{e:ulesspimplication}) that $U_s^b\le p^*$ implies that if there are at least two non-marked particles  on $e_s$ then a proportion in $[1/3,2/3]$ of the non-marked particles on the edge end up on each vertex on $e_s$ (at time $\tau_s$).

Thus on event $A_s$ (and as black particles in the chameleon process move as non-marked particles in MaBB), almost surely,
\begin{align*}
B_{\tau_s}^\mathrm{C}(e_s^1)&\ge \frac13\indic{\sum_{i=1}^2B_{\tau_s-}^\mathrm{C}(e_s^i)\ge 2}\sum_{i=1}^2B_{\tau_s-}^\mathrm{C}(e_s^i),\\
B_{\tau_s}^\mathrm{C}(e_s^2)&\le \frac23\indic{\sum_{i=1}^2B^\mathrm{C}_{\tau_s-}(e_s^i)\ge 2}\sum_{i=1}^2B_{\tau_s-}^\mathrm{C}(e_s^i)+\indic{\sum_{i=1}^2B_{\tau_s-}^\mathrm{C}(e_s^i)=1}\\
&\le \frac23\indic{\sum_{i=1}^2B^\mathrm{C}_{\tau_s-}(e_s^i)\ge 2}\sum_{i=1}^2B^\mathrm{C}_{\tau_s-}(e_s^i)+1.
\end{align*}
Thus on event $A_s$,  almost surely,
\begin{align*}
\frac{\chi(B^\mathrm{C}_{\tau_s}(e_s^1))}{\chi(B^\mathrm{C}_{\tau_s}(e_s^2))}&\ge \gamma+\frac{b(1-\gamma)-a\gamma}{\frac{2a}{3}\indic{\sum_{i=1}^2B_{\tau_s-}^\mathrm{C}(e_s^i)\ge 2}\sum_{i=1}^2B_{\tau_s-}^\mathrm{C}(e_s^i)+a+b}\\
&\ge \gamma,
\end{align*}
for any $\gamma\le 1/2$ provided $b(1-\gamma)\ge a\gamma$. We similarly have $\frac{\chi(B^\mathrm{C}_{\tau_s}(e_s^2))}{\chi(B^\mathrm{C}_{\tau_s}(e_s^1))}\ge \gamma$ under the same condition. This condition is satisfied taking $\gamma=1/(2a)$ (and this is indeed $\le 1/2$ as $a\ge 1$).

Finally, it remains to show that for each $i\in\{1,2\}$ we have $\mathrm{P}_{e_s,B^\mathrm{C}_{\tau_s-},B^\mathrm{C}_{\tau_s}}(e_s^i,e_s^i)\in[p^*\wedge\frac29,(1-p^*)\vee\frac79]$ on event $A_s$, almost surely. This is the probability that in the MaBB process, if the marked particle is on vertex $e_s^i$, it remains on vertex $e_s^i$ given the non-marked particles update from configuration $B^\mathrm{C}_{\tau_s-}$ to $B^\mathrm{C}_{\tau_s}$ when edge $e_s$ rings. 

Suppose $\sum_{j=1}^2 B^\mathrm{C}_{\tau_s-}(e_s^j)\ge 2$, i.e.\! before the update there are at least 2 black particles on $e_s$. For $y\in e_s$, write $m_s(y)\in\{0,1\}$ for the number of marked particles on $y$ after the update at time $\tau_s$. On event $A_s$, for each $i\in\{1,2\}$ we have $B^\mathrm{C}_{\tau_s}(e_s^i)\in[\frac13\sum_{j=1}^2 B^\mathrm{C}_{\tau_s-}(e_s^j),\frac23\sum_{j=1}^2 B^\mathrm{C}_{\tau_s-}(e_s^j)]$ and thus
\begin{align*}
B^\mathrm{C}_{\tau_s}(e_s^i)+m_s(e_s^i)&\in\Big[\frac13\sum_{j=1}^2 B^\mathrm{C}_{\tau_s-}(e_s^j),\frac23\sum_{j=1}^2 B^\mathrm{C}_{\tau_s-}(e_s^j)+1\Big]\\&=\Big[\frac13\big(\sum_{j=1}^2 B^\mathrm{C}_{\tau_s-}(e_s^j)+1\big)-\frac13,\frac23\big(\sum_{j=1}^2 B^\mathrm{C}_{\tau_s-}(e_s^j)+1\big)+\frac13\Big]\\
&\subseteq\Big[\frac29\big(\sum_{j=1}^2 B^\mathrm{C}_{\tau_s-}(e_s^j)+1\big),\frac79\big(\sum_{j=1}^2 B^\mathrm{C}_{\tau_s-}(e_s^j)+1\big)\Big].
\end{align*}
Now recall (from the discussion after~\eqref{e:MaBB2BBSP}) the description of the MaBB process in which we remove the mark on the marked particle, then update as the BBSP, and then choose a uniform particle on the edge on which to apply the mark. Together with the just-determined bound on the number of particles on $e_s^i$, this tells us that the probability the marked particle is on $e_s^i$ after the update is in $[\frac29,\frac79]$.

Suppose now that $\sum_{j=1}^2 B^\mathrm{C}_{\tau_s-}(e_s^j)=1$. Recall the definition of $\mathrm{P}_{e,B^\mathrm{C}_{\tau_s-},B^\mathrm{C}_{\tau_s}}(e_s^i,e_s^i)$ as
\begin{align*}
\mathrm{P}_{e,B^\mathrm{C}_{\tau_s-},B^\mathrm{C}_{\tau_s}}(e_s^i,e_s^i)&:=\frac{B^\mathrm{C}_{\tau_s}(e_s^i)+1}{B^\mathrm{C}_{\tau_s}(e_s^1)+B^\mathrm{C}_{\tau_s}(e_s^2)+1}\frac{\mathrm{P}_e^{\mathrm{BB}(G,s,m)}(C_{B^\mathrm{C}_{\tau_s-},e_s^i},C_{B^\mathrm{C}_{\tau_s},e_s^i})}{\mathrm{P}_e^{\mathrm{BB}(G,s,m-1)}(B^\mathrm{C}_{\tau_s-},B^\mathrm{C}_{\tau_s})}\\
&=\frac{B^\mathrm{C}_{\tau_s}(e_s^i)+1}{2}\frac{\mathrm{P}_e^{\mathrm{BB}(G,s,m)}(C_{B^\mathrm{C}_{\tau_s-},e_s^i},C_{B^\mathrm{C}_{\tau_s},e_s^i})}{\frac12}\\
&=(B^\mathrm{C}_{\tau_s}(e_s^i)+1)\mathrm{P}_e^{\mathrm{BB}(G,s,m)}(C_{B^\mathrm{C}_{\tau_s-},e_s^i},C_{B^\mathrm{C}_{\tau_s},e_s^i}),
\end{align*}
where we have used Property~\ref{a:sym} to obtain $\mathrm{P}_e^{\mathrm{BB}(G,s,m-1)}(B^\mathrm{C}_{\tau_s-},B^\mathrm{C}_{\tau_s})=1/2$. BBSP configuration $C_{B^\mathrm{C}_{\tau_s-},e_s^i}$ has two particles, thus by Property~\ref{a:sym} and the second part of Property~\ref{a:prob}, $\mathrm{P}_e^{\mathrm{BB}(G,s,m)}(C_{B^\mathrm{C}_{\tau_s-},e_s^i},C_{B^\mathrm{C}_{\tau_s},e_s^i})\ge p^*$, and so $\mathrm{P}_{e,B^\mathrm{C}_{\tau_s-},B^\mathrm{C}_{\tau_s}}(e_s^i,e_s^i)\ge p^*$. If $B^\mathrm{C}_{\tau_s}(e_s^i)=0$ (so that the non-marked particle and the marked particle end up on different vertices) we have (again by Property~\ref{a:prob}) $\mathrm{P}_{e,B^\mathrm{C}_{\tau_s-},B^\mathrm{C}_{\tau_s}}(e_s^i,e_s^i)\le 1-2p^*$, whereas if $B^\mathrm{C}_{\tau_s}(e_s^i)=1$, then by Properties~\ref{a:sym} and~\ref{a:prob}, $\mathrm{P}_{e,B^\mathrm{C}_{\tau_s-},B^\mathrm{C}_{\tau_s}}(e_s^i,e_s^i)\le (B^\mathrm{C}_{\tau_s}(e_s^i)+1)\frac{1-p^*}{2}=1-p^*$.

Finally, if $B^\mathrm{C}_{\tau_s-}(e_s^i)=0$, then a marked particle on $e_s^i$ stays on $e_s^i$ at the update time $\tau_s$ with probability $1/2$ by Property~\ref{a:sym}.

Thus in all cases we have that on event $A_s$, almost surely $\mathrm{P}_{e_s,B^\mathrm{C}_{\tau_s-},B^\mathrm{C}_{\tau_s}}(e_s^i,e_s^i)\in[p^*\wedge\frac29, (1-p^*)\vee\frac79]$.\qedhere
\end{proof}
\section{Proof of Proposition~\ref{P:cham}}\label{S:proof}
We can now put together the results obtained so far and complete the proof of Proposition~\ref{P:cham}. These arguments are similar to those in previous works using a chameleon process.

The next result bounds the first depinking time $D_1$. We wish to apply this result for any of the depinking times, and so we present the result in terms of a chameleon process started from any configuration in $\mathcal{C}(m)$. In reality, a chameleon process at time 0 will always have all red particles on a single vertex, as is apparent from Definition~\ref{d:cham}.

\begin{lemma}\label{L:depinkexp}
If the round length $T$ satisfies $T\ge 2\max_{i,j}\mathds{E}\hat M_{i,j}(G)$, then from any initial configuration in $\mathcal{C}(m)$ of the (non-modified) chameleon process, the first depinking time has an exponential moment:
\[
\mathds{E}[e^{D_1/(KT)}]\le \frac{12a}{(p^*)^2},
\]
where $K=8a/(p^*)^2$.
\end{lemma}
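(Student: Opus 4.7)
The plan is to reduce to the modified chameleon process of Proposition~\ref{P:lossred} via a graphical coupling, then combine the expected decay of $|R|$ from that proposition with Markov's inequality to obtain a geometric tail for $D_1$, and finally sum the resulting series. Without loss of generality assume the initial state satisfies $|R_0|\leq|W_0|$, so that $S_0:=|R_0|=\min(|R_0|,|W_0|)$; the other case follows by swapping the roles of red and white throughout.

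First, I would couple the non-modified chameleon process with the modified chameleon process via the common graphical construction and argue that both have the \emph{same} first depinking time $D_1$. The key observation is the conservation law $\min(|R|,|W|)+|P|/2=S_0$ (valid between depinkings), which makes the depinking condition $|P|\geq\min(|R|,|W|)$ equivalent to $N\geq\lceil S_0/3\rceil$, where $N$ denotes the cumulative number of pairs pinkened. Under the coupling, the two processes pinken identically event-by-event until the first ringing at which the non-modified per-event cap $\lceil S_0/3\rceil-|P^{\mathrm{nm}}|/2$ binds; at that event $N^{\mathrm{nm}}$ reaches exactly $\lceil S_0/3\rceil$ while $N^{\mathrm{mod}}$ meets or exceeds it, so both processes trigger depinking at the end of the same round.

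Second, I would define $Z_i:=|R^{\mathrm{mod}}_{iT-}|\indic{D_1>iT}$. On $\{D_1>(i-1)T\}$, no depinking has occurred, so $|R^{\mathrm{mod}}|\leq|W^{\mathrm{mod}}|$ still holds at the start of round $i$ (pinkening decreases both counts by the same amount). Proposition~\ref{P:lossred} applied conditionally then gives $\mathds{E}[|R^{\mathrm{mod}}_{iT-}|\mid\mathcal{F}_{(i-1)T}]\leq(1-c)|R^{\mathrm{mod}}_{(i-1)T}|$ with $c=(p^*)^2/(4a)=2/K$, while on $\{D_1\leq(i-1)T\}$ we automatically have $Z_{i-1}=Z_i=0$. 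Iterating yields $\mathds{E}[Z_i]\leq(1-c)^i|R_0|$. On $\{D_1>iT\}$, since $N<\lceil S_0/3\rceil$, we have $Z_i=|R^{\mathrm{mod}}_{iT-}|\geq|R_0|-\lceil S_0/3\rceil+1\geq2|R_0|/3$, and Markov's inequality produces
\[
\mathds{P}(D_1>iT)\leq\frac{(1-c)^i|R_0|}{2|R_0|/3}=\frac{3}{2}(1-c)^i.
\]

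Third, using $\mathds{P}(D_1=iT)\leq\mathds{P}(D_1\geq iT)\leq(3/2)(1-c)^{i-1}$, the exponential moment is
\[
\mathds{E}[e^{D_1/(KT)}]\leq\frac{3}{2}\sum_{i=1}^\infty e^{i/K}(1-c)^{i-1}=\frac{(3/2)\,e^{1/K}}{1-e^{1/K}(1-c)}.
\]
Using $e^{1/K}\leq K/(K-1)$ together with $c=2/K$ gives $e^{1/K}(1-c)\leq(K-2)/(K-1)$, so $1-e^{1/K}(1-c)\geq 1/(K-1)$, and the bound becomes $(3/2)\cdot K=12a/(p^*)^2$, as required. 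The main obstacle will be verifying the coupling claim $D_1^{\mathrm{nm}}=D_1^{\mathrm{mod}}$ rigorously---one must track how the pooled pile and the outcomes of the uniform-choice steps stay synchronized between the two processes up to the first cap-binding event, so that both reach the threshold $N\geq\lceil S_0/3\rceil$ simultaneously. A secondary subtlety is justifying the Proposition~\ref{P:lossred} iteration on rounds that may end in depinking; introducing the indicator-truncated $Z_i$ handles this cleanly, avoiding any direct conditioning on the stopping event $\{D_1>iT\}$.
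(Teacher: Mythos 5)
Your proposal is correct and follows essentially the same route as the paper: the paper obtains the tail bound $\mathds{P}(D_1>iT)\le\frac32(1-c)^i$ with $c=(p^*)^2/(4a)$ by citing the analogous argument in \citet[Lemma 9.2]{olive} (which, as you reconstruct, couples to the modified process, iterates Proposition~\ref{P:lossred} to get geometric decay of $\mathds{E}[|R^{\mathrm{C}}_{iT-}|]$, and applies Markov using that at least $2|R_0|/3$ reds must survive if no depinking has occurred), and then sums the exponential series with the same choice $K=2/c$. Your summation step differs only cosmetically (bounding $e^{1/K}\le K/(K-1)$ rather than using $K\ge -2/\log(1-c)$), and both yield $\frac32K=12a/(p^*)^2$; the coupling details you flag as the "main obstacle" are exactly the content the paper outsources to the cited lemma.
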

\begin{proof}
This proof follows closely the proof of \cite[Lemma 9.2]{olive}. By the same arguments as there, we obtain 
\[
\mathds{P}(D_1>iT)\le \frac32 (1-c)^i,
\]
for any integer $i\ge 1$, with $c=(p^*)^2/(4a)$ the constant from Proposition~\ref{P:lossred}.

To obtain the bound on the exponential moment, observe that for any $K>0$, 
\begin{align*}
\mathds{E}[e^{D_1/(KT)}]&=\sum_{i=1}^\infty\mathds{E}[e^{D_1/(KT)}\indic{iT<D_1<(i+1)T}]\\
&\le \sum_{i=1}^\infty\mathds{E}[e^{(i+1)/K}\indic{D_1>iT}]=\sum_{i=1}^\infty e^{(i+1)/K}\mathds{P}(D_1>iT)\\
&\le \sum_{i=1}^\infty \frac32e^{1/K}\exp\left(\frac{i}{K}+i\log(1-c)\right).
\end{align*}
Set $K=2/c\ge -2/\log(1-c)$; then $i/K+i\log(1-c)\le \frac{i}{2}\log(1-c)<0$, and 
\[
\mathds{E}[e^{D_1/(KT)}]\le \frac{3}{2(1-\sqrt{1-c})}\le \frac{3}{c}.\qedhere
\]
\end{proof}

We now show a result which bounds the exponential moment of the $j$th depinking time. In order to emphasise the initial configuration on the underlying MaBB we shall write $D_j((\xi,x))$ for the $j$th depinking time of a chameleon process corresponding to a MaBB which at time 0 is in configuration $(\xi,x)\in\Omega'_{G,m}$.
\begin{lemma}\label{L:expmoment}
If the round length $T$ satisfies $T\ge 2\max_{i,j}\mathds{E}\hat M_{i,j}(G)$, then for any $(\xi,x)\in\Omega'_{G,m}$,  for all $j\in\mathds{N}$, \[\mathds{E}[e^{D_j((\xi,x))/(K T)}\mid \mathrm{Fill}]\le \left(\frac{12a}{(p^*)^2}\right)^j,\]
where $K=8a/(p^*)^2$.
\end{lemma}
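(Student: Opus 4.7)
I would proceed by induction on $j$, after first strengthening the statement to hold for the chameleon process started from any configuration $c \in \mathcal{C}(m)$, not only those of MaBB-initial form. Writing $\mathds{E}_c, \mathds{P}_c$ for the law starting from $c$ and $I(c)$ for its initial ink value, the useful form of the hypothesis is
\[
\mathds{E}_c\bigl[e^{D_j/(KT)}\,\mathds{1}_{\mathrm{Fill}}\bigr]\le \lambda^j\, \mathds{P}_c(\mathrm{Fill}),\qquad \lambda:=\tfrac{12a}{(p^*)^2},
\]
equivalent to the lemma's statement since $\mathds{P}_c(\mathrm{Fill})=I(c)/(a(m-1)+bn)$ by Lemma~\ref{L:inkmart} (and trivial when $I(c)=0$). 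This strengthening is essential because the post-depinking configuration appearing in the inductive step is generally not of MaBB-initial form. For the base case $j=1$, let $\mathcal{F}_{D_1-}$ denote the $\sigma$-algebra of the graphical construction up to but not including the coin flip used at time $D_1$; by Lemma~\ref{L:inkmc} the future ink sequence $(\widehat{\ink}_\ell)_{\ell\ge 0}$ starts at $\widehat{\ink}_0=I(c)$ and evolves as a bounded martingale driven by the independent coin flips $\{d_i\}$, so optional stopping yields $\mathds{P}_c(\mathrm{Fill}\mid\mathcal{F}_{D_1-})=I(c)/(a(m-1)+bn)=\mathds{P}_c(\mathrm{Fill})$ almost surely. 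Since $D_1$ is $\mathcal{F}_{D_1-}$-measurable, the tower property and Lemma~\ref{L:depinkexp} give $\mathds{E}_c[e^{D_1/(KT)}\mathds{1}_{\mathrm{Fill}}]=\mathds{E}_c[e^{D_1/(KT)}]\,\mathds{P}_c(\mathrm{Fill})\le\lambda\,\mathds{P}_c(\mathrm{Fill})$.

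For the inductive step, I would apply the strong Markov property at the stopping time $D_1$ to write
\[
\mathds{E}_c\bigl[e^{D_j/(KT)}\,\mathds{1}_{\mathrm{Fill}}\bigr]=\mathds{E}_c\!\left[e^{D_1/(KT)}\,\mathds{E}_{c_{D_1}}\!\bigl[e^{D_{j-1}/(KT)}\,\mathds{1}_{\mathrm{Fill}}\bigr]\right],
\]
where $c_{D_1}$ is the (random) chameleon configuration immediately after the first depinking, so $I(c_{D_1})=\widehat{\ink}_1$. The induction hypothesis applied with initial configuration $c_{D_1}$ bounds the inner expectation by $\lambda^{j-1}\widehat{\ink}_1/(a(m-1)+bn)$ almost surely. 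Next, by Lemma~\ref{L:inkmc} and the independence of the coin flip effecting the first depinking from $\mathcal{F}_{D_1-}$, we have $\mathds{E}[\widehat{\ink}_1\mid\mathcal{F}_{D_1-}]=\widehat{\ink}_0=I(c)$; combining with the $\mathcal{F}_{D_1-}$-measurability of $D_1$ and Lemma~\ref{L:depinkexp} gives $\mathds{E}_c[e^{D_1/(KT)}\widehat{\ink}_1]=I(c)\,\mathds{E}_c[e^{D_1/(KT)}]\le I(c)\,\lambda$, and therefore $\mathds{E}_c[e^{D_j/(KT)}\mathds{1}_{\mathrm{Fill}}]\le\lambda^j I(c)/(a(m-1)+bn)=\lambda^j\mathds{P}_c(\mathrm{Fill})$, closing the induction.

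The main point requiring care is the independence between the coin flip sequence $\{d_i\}$, which drives the ink martingale and hence the event $\mathrm{Fill}$, and the remaining randomness of the graphical construction that determines $D_1$ and the black-particle trajectory; this is used both to factor the base case and to derive the martingale identity in the inductive step. I would also need to verify that Lemma~\ref{L:depinkexp} applies uniformly to every $c \in \mathcal{C}(m)$ — including ink-absorbed configurations, which still produce (trivial) depinking times at subsequent multiples of $T$ — so that boundary behaviour of the ink martingale does not derail the induction.
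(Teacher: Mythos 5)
Your proof is correct and is essentially the argument the paper delegates to its citation of \citet[Lemma 6.2]{olive}: induction on $j$ via the strong Markov property at $D_1$, combined with Lemma~\ref{L:depinkexp} and the independence of the depinking coin flips from the rest of the graphical construction, which yields both the factorisation $\mathds{E}_c[e^{D_1/(KT)}\mathds{1}_{\mathrm{Fill}}]=\mathds{E}_c[e^{D_1/(KT)}]\,\mathds{P}_c(\mathrm{Fill})$ and the martingale identity $\mathds{E}[\widehat{\ink}_1\mid\mathcal{F}_{D_1-}]=I(c)$. The strengthening to arbitrary initial configurations in $\mathcal{C}(m)$ and the check on ink-absorbed configurations are precisely the details needed to close the induction, so nothing is missing.
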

\begin{proof}
This proof follows identically to the proof of Lemma~6.2 from~\cite{olive} and uses Lemma~\ref{L:depinkexp}.
\end{proof}

\begin{proof}[Proof of Proposition~\ref{P:cham}]

We apply Proposition~\ref{P:tvboundink} to obtain
\begin{align}\label{e:4}
\|\mathcal{L}((\xi_t,m_t))-\mathcal{L}((\xi_t,\tilde m_t))\|_\mathrm{TV}\le \max_{(\xi,x)\in\Omega_{G,m}'}\mathds{E}\left[1-\frac{\ink_t^{(\xi,x)}}{a(m-1)+bn}\mid\mathrm{Fill}\right].
\end{align}
Lemma~\ref{L:inkonly} says that the total ink can only change at depinking times, thus (recalling the definition of $\widehat{\ink}_j$), $\ink_t^{(\xi,x)}=\chi(\xi(x))$ if $t<D_1((\xi,x))$ and $\ink_t^{(\xi,x)}=\widehat{\ink}_j^{(\xi,x)}$ if $D_j((\xi,x))\le t<D_{j+1}((\xi,x))$ for some $j$. Hence we have that for any $j\ge 1$,
\begin{align*}
1-\frac{\ink_t^{(\xi,x)}}{a(m-1)+bn}&\le \max_{\ell\ge j}\left(1-\frac{\widehat{\ink}_\ell^{(\xi,x)}}{a(m-1)+bn}\right)+\indic{t<D_j((\xi,x))}\\
&\le \sum_{\ell\ge j}\left(1-\frac{\widehat{\ink}_\ell^{(\xi,x)}}{a(m-1)+bn}\right)+\indic{t<D_j((\xi,x))}.
\end{align*}Taking expectations (given Fill) on both sides and using~\eqref{e:4} we obtain for any $j\ge1$,
\begin{align*}
&\|\mathcal{L}((\xi_t,m_t))-\mathcal{L}((\xi_t,\tilde m_t))\|_\mathrm{TV}\\&\le \max_{(\xi,x)\in\Omega_{G,m}'}\left\{\sum_{\ell\ge j}\mathds{E}\left[1-\frac{\widehat{\ink}_\ell^{(\xi,x)}}{a(m-1)+bn}\mid\mathrm{Fill}\right]+\mathds{P}(D_j((\xi,x))>t\mid\mathrm{Fill})\right\}.
\end{align*}
We bound the first term using Lemma~\ref{L:expdec} to obtain
\begin{align*}
&\|\mathcal{L}((\xi_t,m_t))-\mathcal{L}((\xi_t,\tilde m_t))\|_\mathrm{TV}\\&\le  \max_{(\xi,x)\in\Omega_{G,m}'}\left\{\sum_{\ell\ge j}(71/72)^\ell\sqrt{a(m-1)+bn}+\mathds{P}(D_j((\xi,x))>t\mid\mathrm{Fill})\right\}\\
&\le 100e^{-j\log(72/71)}\sqrt{a(m-1)+bn}+2m\max_{(\xi,x)\in\Omega_{G,m}'}\mathds{P}(D_j((\xi,x))>t\mid\mathrm{Fill}),
\end{align*}and then by a Chernoff bound and Lemma~\ref{L:expmoment} (recall constant $K=8a/(p^*)^2$) we have that 
\begin{align*}
&\|\mathcal{L}((\xi_t,m_t))-\mathcal{L}((\xi_t,\tilde m_t))\|_\mathrm{TV}\\&\le100e^{-j\log(72/71)}\sqrt{a(m-1)+bn}+2me^{j\log(12a(p^*)^{-2})-t/(2K \max_{i,j}\mathds{E}\hat M_{i,j}(G))}.
\end{align*}
This holds for all $j\ge1$ so if we apply it with $j=\left\lfloor\frac{t}{4K\max_{i,j}\mathds{E}\hat M_{i,j}(G)\log(12a(p^*)^{-2})}\right\rfloor$ we obtain
\begin{align*}
\|\mathcal{L}((\xi_t,m_t))-\mathcal{L}((\xi_t,\tilde m_t))\|_\mathrm{TV}&\le K_0  e^{-t/(4K\max_{i,j}\mathds{E}\hat M_{i,j}(G)\log(12a(p^*)^{-2}))}\sqrt{a(m-1)+bn}
\end{align*}
for some universal constant $K_0>0$. 
\end{proof}
\section{Lower bound for the line}\label{S:lower}
In this section we prove Proposition~\ref{P:linelower}, our mixing time lower bound for the line. Our argument is essentially a discretization of an argument from~\citet{10.1214/20-AOP1428} for a continuous-mass redistribution model on the line, but we include the details here for completeness. 

The first step establishes a coupling between two realisations of the beta-binomial splitting process on the line in which we remove the restriction on the number of particles so that the state space of the processes is $\mathds{N}_0^{L_n}$. We write such a process as BB$(L_n,s)$ and by detailed balance considerations it is immediate that reversible measures for this process are those for which $\xi(k)$ are i.i.d.\! Negative Binomial random variables with parameters $s$ and any $p\in[0,1]$. The coupling has a certain monotonicity property as demonstrated by the following result.
\begin{lemma}\label{L:mon}
Let $(\xi_t)_{t\ge0}$ and $(\xi'_t)_{t\ge0}$ be realisations of BB$(L_n,s)$ such that $\xi_0(k)\le \xi'_0(k)$ for all $k\in L_n$.
There exists a coupling of these realisations such that for all $t\ge0$ and $k\in L_n$, $\xi_t(k)\le \xi'_t(k)$.
\end{lemma}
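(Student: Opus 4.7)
The plan is to construct the two processes on a common probability space via a graphical construction in which edge rings are shared, and at each ring the beta-binomial randomness is also shared in a carefully coupled way. The key ingredient is the standard representation of the beta-binomial: if $p\sim\mathrm{Beta}(s,s)$ and, conditionally on $p$, $B_1,B_2,\ldots$ are i.i.d.\ Bernoulli$(p)$, then $\sum_{i=1}^N B_i\sim\mathrm{BetaBin}(N,s,s)$. This is exactly the symmetric binomial-thinning of the underlying Bernoulli process that will let us couple two different values of $N$ monotonically.

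Concretely, I would take a rate-$\sum_e r_e$ Poisson process on $(0,\infty)$ whose points $\{\tau_k\}$ are the common ringing times. At each $\tau_k$, independently of everything else, I would sample an edge $e_k=\{v,w\}$ (with $v<w$) with probability proportional to $r_{e_k}$, a variable $p_k\sim\mathrm{Beta}(s,s)$, and an infinite i.i.d.\ sequence $B^k_1,B^k_2,\ldots$ of Bernoulli$(p_k)$'s. At time $\tau_k$, set
\[
\xi_{\tau_k}(v)=\sum_{i=1}^{N_k}B^k_i,\quad \xi_{\tau_k}(w)=N_k-\xi_{\tau_k}(v),\qquad N_k:=\xi_{\tau_k-}(v)+\xi_{\tau_k-}(w),
\]
and analogously for $\xi'_{\tau_k}$ using $N_k':=\xi'_{\tau_k-}(v)+\xi'_{\tau_k-}(w)$; leave all other coordinates unchanged. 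By the representation above, each process marginally has the correct BB$(L_n,s)$ dynamics.

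The monotonicity claim follows by induction on $k$. Assume $\xi_{\tau_k-}(u)\le\xi'_{\tau_k-}(u)$ for every $u$; then $N_k\le N_k'$, and since the partial sums $\sum_{i=1}^N B_i^k$ are nondecreasing in $N$, we immediately get $\xi_{\tau_k}(v)\le\xi'_{\tau_k}(v)$. Moreover the increment $\xi'_{\tau_k}(v)-\xi_{\tau_k}(v)=\sum_{i=N_k+1}^{N_k'}B_i^k\le N_k'-N_k$, which rearranges to $\xi'_{\tau_k}(w)-\xi_{\tau_k}(w)\ge 0$. All coordinates outside $\{v,w\}$ are preserved, so the monotonicity extends to time $\tau_k$. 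Since the configurations are constant between rings, monotonicity holds for all $t\ge0$.

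There is no real obstacle: the only thing to check is that the coupled Bernoulli-thinning construction really does produce two marginally correct BB$(L_n,s)$ processes. This is immediate from the conditional-on-$p_k$ binomial thinning structure, together with the fact that a binomial sum is nondecreasing in the number of summands, which is what forces both $\xi_{\tau_k}(v)\le\xi'_{\tau_k}(v)$ and $\xi_{\tau_k}(w)\le\xi'_{\tau_k}(w)$ simultaneously.
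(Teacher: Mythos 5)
Your proposal is correct and follows essentially the same route as the paper: shared ringing times, a shared $\mathrm{Beta}(s,s)$ variable at each update, and a monotone coupling of the resulting conditional binomials, concluded by induction over update times. The only difference is that you make explicit (via the shared Bernoulli sequence) the monotone binomial coupling that the paper leaves as an immediate observation, which is a welcome bit of added rigour but not a different argument.
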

\begin{proof}
We use the same edge ringing times in the two processes. Suppose edge $e=\{k,k+1\}$ rings at time $s$ and $\xi'_{s-}(k)+\xi'_{s-}(k+1)\ge \xi_{s-}(k)+\xi_{s-}(k+1)$. It is then immediate that we can couple the two processes so that $\xi'_s(k)\ge \xi_s(k)$ and $\xi'_s(k+1)\ge \xi_s(k+1)$ (for instance we can generate a  Beta Binomial$(r,s,s)$ random variable by first sampling $p\sim\,$Beta$(s,s)$ and then sampling a Bin$(r,p)$; we can couple the updates by choosing the same $p$ for the two processes). The proof is then complete by induction on the update times.
\end{proof}

The next step is the identification of an eigenfunction of the generator $\mathcal{L}^{\mathrm{BB}(L_n,s,m)}$.
\begin{lemma}\label{L:eigen}
The map
\[
f(\xi):=\sum_{k=1}^{n-1}\sin\left(\frac{\pi k}{n}\right)\left(\sum_{i=1}^k\xi(i)-\frac{mk}{n}\right)
\]is an eigenfunction of $\mathcal{L}^{\mathrm{BB}(L_n,s,m)}$ with eigenvalue $\lambda:=\frac1{n-1}(\cos(\pi/n)-1)$.
\end{lemma}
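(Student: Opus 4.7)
The plan is to compute $\mathcal{L}^{\mathrm{BB}(L_n,s,m)} f$ by analysing the action of each edge update on the partial-sum functions, and then recognise the resulting expression via a discrete Green's identity. Write $g_k(\xi) := \sum_{i=1}^k \xi(i) - mk/n$, so that $f = \sum_{k=1}^{n-1} \sin(\pi k/n)\,g_k$. The key observation is that when edge $\{j,j+1\}$ rings, $g_k$ is unchanged in expectation for every $k \ne j$: if $k < j$ the partial sum does not involve vertices $j$ or $j+1$, while if $k \ge j+1$ the total mass on $\{j,j+1\}$ is preserved. For $k = j$, Property~\ref{a:sym} gives $\mathds{E}[\xi'_{\{j,j+1\}}(j)] = (\xi(j) + \xi(j+1))/2$, hence
\[
\mathds{E}[g_j(\xi'_{\{j,j+1\}}) - g_j(\xi)] = \frac{\xi(j+1) - \xi(j)}{2} = \frac{g_{j+1} - 2g_j + g_{j-1}}{2},
\]
the last equality following because the affine term $-mk/n$ has vanishing discrete Laplacian.

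Summing over edges, each of which has normalised weight $1/(n-1)$, yields
\[
\mathcal{L} f(\xi) = \frac{1}{2(n-1)} \sum_{j=1}^{n-1} \sin(\pi j/n)\,(g_{j+1} - 2g_j + g_{j-1}).
\]
Next I will apply discrete summation by parts. Crucially, both $\{\sin(\pi j/n)\}$ and $\{g_j(\xi)\}$ vanish at $j=0$ and $j=n$ (the former by trigonometry, the latter since $\sum_{i=1}^n \xi(i) = m$), so the boundary Wronskian terms $u_0 v_1 - u_1 v_0$ and $u_{n-1}v_n - u_n v_{n-1}$ cancel and the discrete Laplacian transfers:
\[
\sum_{j=1}^{n-1} \sin(\pi j/n)\,(g_{j+1} - 2g_j + g_{j-1}) = \sum_{j=1}^{n-1} g_j \,\bigl(\sin(\pi(j+1)/n) - 2\sin(\pi j/n) + \sin(\pi(j-1)/n)\bigr).
\]
Finally, the product-to-sum identity $\sin(\pi(j+1)/n) + \sin(\pi(j-1)/n) = 2\sin(\pi j/n)\cos(\pi/n)$ reduces the inner bracket to $2(\cos(\pi/n)-1)\sin(\pi j/n)$, giving
\[
\mathcal{L} f(\xi) = \frac{\cos(\pi/n) - 1}{n-1}\sum_{j=1}^{n-1} \sin(\pi j/n)\,g_j(\xi) = \lambda f(\xi),
\]
as claimed.

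No step is a genuine obstacle. The only point that requires care is the boundary cancellation in the summation by parts, which works precisely because the shift $-mk/n$ in the definition of $g_k$ is chosen to enforce $g_n(\xi) = 0$ (matching $g_0(\xi) = 0$), thereby producing Dirichlet-type boundary conditions that pair with those already enjoyed by $\sin(\pi j/n)$.
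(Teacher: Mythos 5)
Your proof is correct and follows essentially the same route as the paper's: compute the generator's action on the partial sums to obtain a discrete Laplacian, then transfer it onto the sine weights by summation by parts using the identity $\sin(\pi(j+1)/n)+\sin(\pi(j-1)/n)=2\cos(\pi/n)\sin(\pi j/n)$. The only difference is that you spell out the boundary cancellation (using $g_0=g_n=0$) that the paper leaves implicit in the phrase ``by summation by parts''.
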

\begin{proof}
Since $\sum_{i=1}^k\xi(i)$ only updates when edge $\{k,k+1\}$ rings, the action of the generator on the map $x_k(\xi):=\sum_{i=1}^k\xi(i)$, $k=1,\ldots,n-1$, $\xi\in\Omega_{L_n,m}$ (and set $x_0\equiv 0$), is given by
\begin{align*}
(\mathcal{L}^{\mathrm{BB}(L_n,s,m)} x_k)(\xi)&=\frac{n-2}{n-1}x_k(\xi)+\frac1{n-1}\left(\frac12 x_{k-1}(\xi)+\frac12 x_{k+1}(\xi)\right)-x_k(\xi)\\
&=\frac1{2(n-1)}\left(x_{k-1}(\xi)+x_{k+1}(\xi)-2x_k(\xi)\right).
\end{align*}
It then follows by summation by parts that 
\[
f(\xi)=\sum_{k=1}^{n-1}\sin\left(\frac{\pi k}{n}\right)\left(x_k-\frac{mk}{n}\right)
\]is an eigenfunction of $\mathcal{L}^{\mathrm{BB}(L_n,s,m)}$ with eigenvalue $\frac1{n-1}(\cos(\pi/n)-1)$.
\end{proof}
The argument proceeds by using Wilson's method~\citep{wilson2004mixing} applied to the eigenfunction $f$ from Lemma~\ref{L:eigen}. In particular, with $(\xi_t)_{t\ge0}$ a realisation of BB$(L_n,s,m)$, we show that $f(\xi_t)$ is far from the equilibrium value $\sum_\xi f(\xi)\pi^{\mathrm{BB}(L_n,s,m)}(\xi)$ with high probability at time $t=\frac{n^3}{\pi^2}\left(\log n-\log \left(1+\frac{n}{m}+\frac1{s}\right)-C_\varepsilon\right)$. The initial value $\xi_0$ is chosen explicitly in terms of Negative Binomial random variables; precisely, we let $\xi_0(1),\ldots,\xi_0(\lfloor n/2\rfloor)$ be i.i.d.\! NegBin$(s,\frac{\lfloor n/2\rfloor s}{m+\lfloor n/2\rfloor s})$ random variables conditioned on $\xi_0(1)+\cdots+\xi_0(\lfloor n/2\rfloor)=m$, and $\xi_0(k)=0$ for $k\in\{\lfloor n/2\rfloor+1,\ldots,n\}$.

\begin{lemma}\label{L:fbounds}
For the realisation $(\xi_t)_{t\ge0}$ of BB$(L_n,s,m)$ described above,  there exists a constant $c>0$ such that for all $t\ge0$ and $n$ sufficiently large, \[
\mathds{E}[f(\xi_t)]\ge c^{-1}nme^{\lambda t},\quad\mathrm{Var}(f(\xi_t))\le cnm^2\left(1+\frac{n}{m}+\frac1{s}\right).
\]
\end{lemma}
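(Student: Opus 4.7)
The plan has two independent components: a direct eigenfunction computation for the expectation, and a carré-du-champ plus monotonic-coupling argument for the variance.

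For the expectation, I would exploit Lemma~\ref{L:eigen} directly: since $\mathcal{L}^{\mathrm{BB}(L_n,s,m)} f = \lambda f$, the function $t\mapsto \mathbb{E}[f(\xi_t)\mid \xi_0]$ satisfies $\frac{d}{dt}\mathbb{E}[f(\xi_t)\mid \xi_0]=\lambda\,\mathbb{E}[f(\xi_t)\mid \xi_0]$, so $\mathbb{E}[f(\xi_t)]=e^{\lambda t}\mathbb{E}[f(\xi_0)]$. The $\xi_0(i)$ for $i\le\lfloor n/2\rfloor$ are exchangeable (i.i.d.\ NegBin's conditioned on their sum), so $\mathbb{E}[\xi_0(i)]=m/\lfloor n/2\rfloor$ for $i\le\lfloor n/2\rfloor$ and is zero otherwise. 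Substituting turns $\mathbb{E}[f(\xi_0)]$ into an explicit deterministic sum that splits at $k=\lfloor n/2\rfloor$; using $\sin(\pi k/n)=\sin(\pi(n-k)/n)$ to reindex, it is essentially $2(m/n)\sum_{k=1}^{\lfloor n/2\rfloor}k\sin(\pi k/n)$, which is a Riemann sum for $(2m/\pi^{2})n\int_{0}^{\pi/2}u\sin u\,du=2mn/\pi^{2}$. This yields $\mathbb{E}[f(\xi_0)]\ge c^{-1}nm$ for large $n$.

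For the variance, set $V(t):=\mathrm{Var}(f(\xi_t))$ and use the carré du champ
\[
\Gamma(f,f)(\xi)=\frac{1}{n-1}\sum_{j=1}^{n-1}\sin^{2}(\pi j/n)\,\mathbb{E}\bigl[(X_j-\xi(j))^{2}\bigr],
\]
where $X_j\sim\mathrm{BetaBin}(\xi(j)+\xi(j+1),s,s)$ (so $f(\xi'_{\{j,j+1\}})-f(\xi)=\sin(\pi j/n)(X_j-\xi(j))$). Because $\mathcal{L}f=\lambda f$, one has $\frac{d}{dt}V(t)=2\lambda V(t)+\mathbb{E}[\Gamma(f,f)(\xi_t)]$, hence
\[
V(t)=e^{2\lambda t}V(0)+\int_{0}^{t}e^{2\lambda(t-r)}\,\mathbb{E}[\Gamma(f,f)(\xi_r)]\,dr.
\]
The initial piece $V(0)$ is computed directly from the known covariance of the conditioned NegBin vector (variance per coordinate is $\Theta((m/n)(1+m/(ns)))$, off-diagonal covariances are $-\Theta(1/n)$ times the variance), giving the clean bound $V(0)=O(n^{2}m+nm^{2}/s)$ after using $A_i:=\sum_{k\ge i}\sin(\pi k/n)=O(n)$ and $\sum_i A_i^{2}=\Theta(n^{3})$.

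For the integral I would invoke the monotone coupling of Lemma~\ref{L:mon}: construct an unconstrained process $(\xi^{*}_t)$ on $\mathbb{N}_0^{L_n}$ started from a product NegBin$(s,p^*)$ measure with mean $\mu^*=\Theta(m/n)$, chosen (together with independent ``top-up'' variables added to each coordinate of $\xi_0$) so that $\xi^{*}_0(i)\ge\xi_0(i)$ almost surely for each $i$. Then $\xi^{*}_t(i)\ge\xi_t(i)$ for all $t,i$, and since the product NegBin$(s,p^*)$ is reversible for $\mathrm{BB}(L_n,s)$, $\xi^{*}_t$ stays in this stationary distribution for all $t$. This lets me bound $\mathbb{E}[(X_j-\xi_t(j))^2]$ (using $\mathrm{Var}(X_j)\le (\xi(j)+\xi(j+1))^2/(4(2s+1))+(\xi(j)+\xi(j+1))/2$ and $\mathbb{E}[(X_j-\xi(j))^2]\le$ a polynomial of degree 2 in $\xi(j),\xi(j+1)$ with coefficients $O(1+1/s)$) uniformly in $t$ by $\mathbb{E}[(\xi^{*}(j))^{2}]=O((m/n)^{2}+(m/n)(1+1/s))$. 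A short calculation gives $\sup_t\mathbb{E}[\Gamma(f,f)(\xi_t)]=O(m/n^2+m^{2}(1+1/s)/n^{3})$; multiplying by $1/(-2\lambda)=\Theta(n^3)$ bounds the integral by $O(nm+m^{2}(1+1/s))$. Summed with $V(0)$, everything fits into $cnm^{2}(1+n/m+1/s)$.

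The main obstacle is the last step: controlling $\mathbb{E}[\Gamma(f,f)(\xi_t)]$ uniformly over the trajectory. A naive ODE for $\sum_j\mathbb{E}[\xi_t(j)^{2}]$ alone only yields a bound of order $m^{2}$, which is too crude once multiplied by $1/(-\lambda)\asymp n^{3}$. The point of routing through $\xi^{*}$ is that the stationary second moments of a product NegBin are of order $(m/n)^{2}+(m/n)(1+1/s)$, which is exactly small enough; constructing the pointwise-domination coupling at time $0$ (while preserving the product structure of $\xi^{*}_0$) is the technical heart of the variance bound.
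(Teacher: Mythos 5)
Your expectation bound and the overall architecture of the variance bound (the identity $\frac{d}{dt}\mathrm{Var}(f(\xi_t))=2\lambda\,\mathrm{Var}(f(\xi_t))+\mathds{E}[\Gamma(f,f)(\xi_t)]$, reduction of $\mathds{E}[\Gamma(f,f)(\xi_t)]$ to $\sum_k\mathds{E}[\xi_t(k)^2]$, a monotone coupling to control these second moments uniformly in $t$, and integration against $e^{2\lambda(t-r)}$) match the paper's proof, which uses the equivalent martingale/angle-bracket formulation. Including $V(0)$ explicitly is fine and your bound for it fits inside the target.

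However, the step you yourself identify as the technical heart cannot be carried out. No product NegBin law on $\mathds{N}_0^{L_n}$ can coordinatewise dominate $\xi_0$ almost surely: the event $\{\sum_i\zeta(i)\ge m\}$ is increasing and has probability $1$ under $\mathcal{L}(\xi_0)$ (the total is exactly $m$), but probability strictly less than $1$ under any product of negative binomials, since such a product assigns positive mass to the all-zero configuration. By Strassen's theorem this rules out any monotone coupling, whatever the mean $\mu^*$. The ``top-up'' variant fails for the same reason: $\xi_0+T$ is supported on $\{\sum\ge m\}$, so its law cannot be a product NegBin law. Since stationarity of $\mathrm{BB}(L_n,s)$ is exactly what you need from $\xi^*$, and stationary measures here are products of NegBins, the route is blocked. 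The paper's repair is a two-stage argument: first dominate $\xi_0$ by i.i.d.\ NegBins conditioned on $\sum_{i\le\lfloor n/2\rfloor}\xi_0'(i)\ge m$ (a monotone coupling exists here, e.g.\ via the sequential P\'olya-urn construction), accept that $\xi'$ is \emph{not} stationary, and then bound $\mathds{E}[(\xi'_u(k))^2]=\mathds{E}[(\xi''_u(k))^2\mid A]\le\mathds{E}[(\xi''_u(k))^2]/\mathds{P}(A)$, where $\xi''$ is the unconditioned stationary product process and $A=\{\sum_{i\le\lfloor n/2\rfloor}\xi''_0(i)\ge m\}$ is a time-zero event with $\mathds{P}(A)\ge1/3$ by the central limit theorem. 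This recovers the stationary second moments $(m/\lfloor n/2\rfloor)^2(1+\lfloor n/2\rfloor/m+1/s)$ at the cost of a constant factor. A secondary remark: bounding $\mathds{E}[(X_j-\xi(j))^2]$ via the beta-binomial variance introduces an extra factor of order $1+1/s$ that the target bound does not absorb for small $s$; the crude deterministic bound $(X_j-\xi(j))^2\le(\xi(j)+\xi(j+1))^2$, used in the paper, avoids this.
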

\begin{proof}We make appropriate modifications to the proof of \citet[Lemma 12]{10.1214/20-AOP1428}.
Fix time $t\ge0.$ As $f$ is an eigenfunction of $\mathcal{L}^{\mathrm{BB}(L_n,s,m)}$ we have \[
\mathds{E}[f(\xi_t)]=\mathds{E}[f(\xi_0)]e^{\lambda t}=e^{\lambda t}\sum_{k=1}^{n-1}\sin\left(\frac{\pi k}{n}\right)\left(\min\left\{\frac{mk}{\lfloor n/2\rfloor},m\right\}-\frac{mk}{n}\right)\ge e^{\lambda t}nm/20
\]for $n$ sufficiently large.

For the variance bound, observe that for $u\in[0,t]$, process $M_u:=e^{\lambda (t-u)}f(\xi_u)$ is a martingale. Thus Var$(f(\xi_t))=\mathds{E}[\langle M\rangle_t]$, where $\langle M\rangle_u$, $u\in[0,t]$, is the angle bracket process. If edge $\{k,k+1\}$ rings at time $u$, $f(\xi_u)$ changes by at most $\xi_u(k)+\xi_u(k+1)$, and thus 
\[
\partial_u\mathds{E}[\langle M\rangle_u]\le e^{2\lambda(t-u)}\sum_{k=1}^{n-1}\frac1{n-1}\mathds{E}[(\xi_u(k)+\xi_u(k+1))^2]\le \frac{4e^{2\lambda(t-u)}}{n-1}\sum_{k=1}^{n}\mathds{E}[(\xi_u(k))^2].
\]
It follows that
\[
\mathrm{Var}(f(\xi_t))\le \frac{8}{n}\int_0^t e^{2\lambda(t-u)}\sum_{k=1}^{n}\mathds{E}[(\xi_u(k))^2]\,\mathrm{d}u.
\]
We can now bound $\sum_{k=1}^{n}(\xi_u(k))^2$ using the monotone coupling of Lemma~\ref{L:mon}. Consider a realisation $(\xi'_t)_{t\ge0}$ of BB$(L_n,s)$ with initial values $\{\xi'_0(k)\}_{k\in L_n}$ being i.i.d.\! NegBin$(s,\frac{\lfloor n/2\rfloor s}{m+\lfloor n/2\rfloor s})$ random variables conditioned on $\xi'_0(1)+\cdots+\xi'_0(\lfloor n/2\rfloor)\ge m$. Further, these initial values can be coupled to $\{\xi_0(k)\}_{k\in L_n}$ so that $\xi_0(k)\le \xi'_0(k)$ for all $k\in L_n$. Lemma~\ref{L:mon} thus gives that $\xi_u(k)\le \xi'_u(k)$ for all $k\in L_n$ and $u\ge0$. In particular we have 
$\mathds{E}[(\xi_u(k))^2]\le \mathds{E}[(\xi'_u(k))^2]$ for all $k\in L_n$ and $u\in[0,t]$.

Finally we consider a realisation $(\xi''_t)_{t\ge0}$ of BB$(L_n,s)$ with initial values $\{\xi''_0(k)\}_{k\in L_n}$ being i.i.d.\! NegBin$(s,\frac{\lfloor n/2\rfloor s}{m+\lfloor n/2\rfloor s})$ random variables (without any conditioning). Then for each $k\in L_n$ and $u\in[0,t]$,
\[
\mathds{E}[(\xi'_u(k))^2]=\mathds{E}\left[(\xi''_u(k))^2\biggm| \sum_{i=1}^{\lfloor n/2\rfloor}\xi''_0(i)\ge m\right]\le \frac{\mathds{E}\left[(\xi''_u(k))^2\right]}{\mathds{P}(\sum_{i=1}^{\lfloor n/2\rfloor}\xi''_0(i)\ge m)}.
\]
As $\xi''_u$ is stationary for BB$(L_n,s)$, we have
\[
\mathds{E}[(\xi''_u(k))^2]=\left(\frac{m}{\lfloor n/2\rfloor}\right)^2\left(1+\frac{\lfloor n/2\rfloor}{m}+\frac1{s}\right).
\]Furthermore, $\mathds{E}[\xi''_u(k)]=m/\lfloor n/2\rfloor$, and so $\mathds{P}(\sum_{i=1}^{\lfloor n/2\rfloor}\xi''_0(i)\ge m)\ge 1/3$ by the central limit theorem, for $n$ sufficiently large. Putting things together, we deduce that for $n$ sufficiently large,
\begin{align*}
\mathrm{Var}(f(\xi_t))&\le 24\int_0^t e^{2\lambda(t-u)}\left(\frac{m}{\lfloor n/2\rfloor}\right)^2\left(1+\frac{\lfloor n/2\rfloor}{m}+\frac1{s}\right)\,\mathrm{d}u\\
&\le 24\left(-\frac1{2\lambda}\right)\left(\frac{m}{\lfloor n/2\rfloor}\right)^2\left(1+\frac{\lfloor n/2\rfloor}{m}+\frac1{s}\right)\\
&\le 100nm^2\left(1+\frac{n}{m}+\frac1{s}\right),
\end{align*}
uniformly in $t\ge0$.
\end{proof}
\begin{proof}[Proof of Proposition~\ref{P:linelower}]
We follow the proof of \citet[Proposition 11]{10.1214/20-AOP1428}. For each $t\ge0$, define a set
\[
E_t:=\{\xi\in \Omega_{L_n,m}:\,f(\xi)\ge \frac12\mathds{E}[f(\xi_t)]\},
\]
where $\xi_0$ is chosen as described in the paragraph before Lemma~\ref{L:fbounds}. For all $t\ge0$, we have
\[
\|\mathcal{L}(\xi_t)-\pi^{\mathrm{BB}(L_n,s,m)}\|_{\mathrm{TV}}\ge \mathds{P}(\xi_t\in E_t)-\pi^{\mathrm{BB}(L_n,s,m)}(E_t).
\]By Lemma~\ref{L:fbounds} and Chebyshev's inequality,
\[
\mathds{P}(\xi_t\in E_t)\ge 1-\frac{4\mathrm{Var}(f(\xi_t))}{(\mathds{E}[f(\xi_t)])^2}\ge1-4c^3n^{-1}e^{-2\lambda t}\left(1+\frac{n}{m}+\frac1{s}\right). 
\]
On the other hand, as $\pi^{\mathrm{BB}(L_n,s,m)}(f)=0$, we have \[\pi^{\mathrm{BB}(L_n,s,m)}(f^2)=\mathrm{Var}_{\pi^{\mathrm{BB}(L_n,s,m)}}(f)=\lim_{t\to\infty}\mathrm{Var}(f(\xi_t))\le cnm^2\left(1+\frac{n}{m}+\frac1{s}\right),
\]so by Markov's inequality,
\[
\pi^{\mathrm{BB}(L_n,s,m)}(E_t)\le \frac{4\pi^{\mathrm{BB}(L_n,s,m)}(f^2)}{(\mathds{E}[f(\xi_t)])^2}\le 4c^3e^{-2\lambda t}\left(1+\frac{n}{m}+\frac1{s}\right).
\]Thus
\[
\|\mathcal{L}(\xi_t)-\pi^{\mathrm{BB}(L_n,s,m)}\|_{\mathrm{TV}}\ge 
1-8c^3n^{-1}e^{-2\lambda t}\left(1+\frac{n}{m}+\frac1{s}\right).\]
It follows that a lower bound on the $\varepsilon$-total variation mixing time is $\frac{n^3}{\pi^2}\left(\log\left(\frac{n}{1+\frac{n}{m}+\frac1{s}}\right)-C_\varepsilon\right)$ for some constant $C_\varepsilon$ depending on $\varepsilon$.
\end{proof}

\begin{appendix}
\section{Proof of Proposition~\ref{P:bbprop}}\label{S:bbprop}
\begin{proof}[Proof of Proposition~\ref{P:bbprop}]Property~\ref{a:state} is immediate. Recall the process has equilibrium distribution
\[
\pi^{\mathrm{BB}(G,s,m)}(\xi)\propto \prod_{v\in V}\frac{\Gamma(s+\xi(v))}{\xi(v)!},\qquad\xi\in\Omega_{G,m}.
\]
Since $s=b/a$ with $a$ and $b$ coprime, this is of the form~\eqref{e:equil}:
\begin{align*}
\pi^{\mathrm{BB}(G,s,m)}(\xi)&\propto \prod_{\substack{v\in V:\\\xi(v)>0}}\frac1{\xi(v)!}\prod_{i=0}^{\xi(v)-1}(i+s)\\&=\prod_{\substack{\xi\in V:\\\xi(v)>0}}\frac1{\xi(v)!}\prod_{i=0}^{\xi(v)-1}\left(i+\frac{b}{a}\right)\propto\prod_{\substack{v\in V:\\\xi(v)>0}}\frac1{\xi(v)!}\prod_{i=0}^{\xi(v)-1}\left(ai+b\right).
\end{align*}Thus Property~\ref{a:eqm} holds. Property~\ref{a:sym} holds as a beta-binomial $X$ with parameters $(k,s,s)$ has the same distribution as $k-X$, for any $k\in\mathds{N}$.

To show Property~\ref{a:prob} holds (with $p^*=(5/12)^{2s}/(6B(s,s))$), we first show that with positive probability, if $\xi(v)+\xi(w)\ge2$ then $X/(\xi(v)+\xi(w))\in[1/3,2/3]$ where $X\sim\mathrm{BetaBin}(\xi(v)+\xi(w),s,s)$. Recall that to sample a BetaBin$(\xi(v)+\xi(w), s, s)$, we can first sample $Y\sim\mathrm{Beta}(s,s)$ and then given $Y$, sample Bin($\xi(v)+\xi(w),Y)$. We first observe that if $s\ge 1$, for such random variable $Y$, with probability at least $(5/12)^{2s}/(2B(s,s))$ (where $B(s,t)$ is the beta function), $Y$ will be in the interval $[5/12,7/12]$. This can be seen by noting that the density function of $Y$ in the interval $[5/12,7/12]$ is minimised on the boundary. If instead $s<1$, then $Y$ will be in $[5/12,7/12]$ with probability at least $(1/2)^{2s}/(2B(s,s))$. Further, if $s\ge20$ then we can use Chebyshev's inequality to obtain $\mathds{P}(Y\in[\frac{5}{12},\frac{7}{12}])\ge 1-\frac{36}{2s+1}\ge \frac12(1-\frac{20}{s+1})$.

Fix $y\in[5/12,7/12]$ and let $Z\sim \mathrm{Bin}(\xi(v)+\xi(w),y)$. We observe that $\mathds{P}(Z\in[(\xi(v)+\xi(w))/3,2(\xi(v)+\xi(w))/3])$ is minimized (over $\xi(v)+\xi(w)\ge 2$ and $y\in[5/12,7/12])$ when $\xi(v)+\xi(w)=4$ and $y=7/12$, with a value of $1225/3456>1/3$. Combining, we obtain that we can take $p^*=(5/12)^{2s}/(6B(s,s))$, and when $s\ge20$ we can take $p^*=\frac16(1-\frac{20}{s+1})$.

For the second part of Property~\ref{a:prob}, if $\xi(v)+\xi(w)=2$ then the probability that both particles end up on the same vertex is $1-\frac{s}{1+2s}$, which is larger than $2p^*$ for our choice of $p^*$.

For Property~\ref{a:MaBBindep}, observe that \begin{align*}\mathrm{P}_e^{\mathrm{BB}(G,s,m+1)}(C_{\xi,v},C_{\xi',v})&=\binom{\xi(v)+\xi(w)+1}{\xi'(v)+1}\frac{B(\xi'(v)+1+s,\xi'(w)+s)}{B(s,s)},\\
\mathrm{P}_e^{\mathrm{BB}(G,s,m+1)}(C_{\xi,v},C_{\xi',w})&=\binom{\xi(v)+\xi(w)+1}{\xi'(v)}\frac{B(\xi'(v)+s,\xi'(w)+1+s)}{B(s,s)},\\
\mathrm{P}_e^{\mathrm{BB}(G,s,m)}(\xi,\xi')&=\binom{\xi(v)+\xi(w)}{\xi'(v)}\frac{B(\xi'(v)+s,\xi'(w)+s)}{B(s,s)}.\end{align*}
Thus 
\begin{align*}
(\xi'(v)+1)\mathrm{P}_e^{\mathrm{BB}(G,s,m+1)}(C_{\xi,v},C_{\xi',v})&=\frac{(\xi(v)+\xi(w)+1)!}{\xi'(w)!\xi'(v)!}\frac{B(\xi'(v)+1+s,\xi'(w)+s)}{B(s,s)},\\
(\xi'(w)+1)\mathrm{P}_e^{\mathrm{BB}(G,s,m+1)}(C_{\xi,v},C_{\xi',w})&=\frac{(\xi(v)+\xi(w)+1)!}{\xi'(w)!\xi'(v)!}\frac{B(\xi'(v)+s,\xi'(w)+1+s)}{B(s,s)}.
\end{align*}
Adding these and using that $B(x,y)=B(x+1,y)+B(x,y+1)$, we obtain
\begin{align*}
&(\xi'(v)+1)\mathrm{P}_e^{\mathrm{BB}(G,s,m+1)}(C_{\xi,v},C_{\xi',v})+(\xi'(w)+1)\mathrm{P}_e^{\mathrm{BB}(G,s,m+1)}(C_{\xi,v},C_{\xi',w})\\&=\frac{(\xi(v)+\xi(w)+1)!}{\xi'(w)!\xi'(v)!}\frac{B(\xi'(v)+s,\xi'(w)+s)}{B(s,s)}\\
&=(\xi(v)+\xi(w)+1)\mathrm{P}_e^{\mathrm{BB}(G,s,m)}(\xi,\xi').\qedhere
\end{align*}
\end{proof}
\section{Proofs of Lemmas~\ref{L:thetaexist}--\ref{L:pairres}}
\label{S:App}

For ease of notation in this section we write $\mathrm{P}(v,v)$ for $\mathrm{P}_{e,B,B'}(v,v)$ and similarly for other probabilities. We shall use throughout (sometimes without reference) that, by Lemma~\ref{L:colour},
\begin{align}
\chi(B(v))\mathrm{P}_{}(v,v)+\chi(B(w))\mathrm{P}_{}(w,v)&=\chi(B'(v)),\label{e:fB'v}\\
\chi(B(v))\mathrm{P}_{}(v,w)+\chi(B(w))\mathrm{P}_{}(w,w)&=\chi(B'(w)).\label{e:fB'w}
\end{align}
We write $R_{v,w}$ for $R(v)+R(w)$, $P_{v,w}$ for $P(v)+P(w)$ and $B_{v,w}$ for $B(v)+B(w)$.
\begin{proof}[Proof of Lemma~\ref{L:thetaexist}]
We first show $m^*(v)\le u(v)+\frac12 u^P(v)$. Recall 
\[
u(v)+\frac12 u^P(v)=\chi(B'(v))\wedge R_{v,w}+\frac12\left(\left\{\chi(B'(v))-\chi(B'(v))\wedge R_{v,w}\right\}\wedge P_{v,w}\right).
\]
Hence if $R_{v,w}>\chi(B'(v))$ then $u(v)+\frac12 u^P(v)=\chi(B'(v))$. On the other hand in this case (using~\eqref{e:fB'v}), \[m^*(v)=\chi(B'(v))-[\chi(B(v)-R(v)-\frac12 P(v)]\mathrm{P}_{}(v,v)-[\chi(B(w)-R(w)-\frac12 P(w)]\mathrm{P}_{}(w,v)\]
and thus as $R(v)+P(v)\le \chi(B(v))$ and $R(w)+P(w)\le \chi(B(w))$, we have $m^*(v)\le \chi(B'(v))$, i.e.\! in this case $m^*(v)\le u(v)+\frac12 u^P(v)$. 

If instead $R_{v,w}\le \chi(B'(v))$, then $u(v)+\frac12 u^P(v)=R_{v,w}+\frac12(\{\chi(B'(v))-R_{v,w}\}\wedge P_{v,w})$. If $P_{v,w}>\chi(B'(v))-R_{v,w}$ then $u(v)+\frac12 u^P(v)=R_{v,w}+\frac12(\chi(B'(v))-R_{v,w})=\frac12(\chi(B'(v))+R_{v,w})$. But \begin{align*}m^*(v)&=\frac12 R(v)\mathrm{P}_{}(v,v)+\frac12 R(w)\mathrm{P}_{}(w,v)\\&\phantom{=}+\frac12\left\{(R(v)+P(v))\mathrm{P}_{}(v,v)+(R(w)+P(w))\mathrm{P}_{}(w,v)\right\}\\
&\le \frac12 R_{v,w}+\frac12\chi(B(v))\mathrm{P}_{}(v,v)+\frac12 \chi(B(w))\mathrm{P}_{}(w,v)\\
&=\frac12(\chi(B'(v))+R_{v,w}),
\end{align*}
using \eqref{e:fB'v} in the last step. If instead $P_{v,w}\le \chi(B'(v))-R_{v,w}$ then $u(v)+\frac12 u^P(v)=R_{v,w}+\frac12P_{v,w}$ and it is clear that this is an upper bound on $m^*(v)$ by bounding $\mathrm{P}_{}(v,v)$ and $\mathrm{P}_{}(w,v)$ by 1.

Now we turn to the lower bound, i.e.\! we want $m^*(v)\ge \ell(v)+\frac12\ell^P(v)$. Recall
\[
\ell(v)+\frac12\ell^P(v)=\{R_{v,w}-\chi(B'(w))\}\vee0+\frac12\left[\left\{P_{v,w}-\chi(B'(w))+R\wedge \chi(B'(w))\right\}\vee 0\right].
\]
If $R_{v,w}>\chi(B'(w))$, then $\ell(v)+\frac12\ell^P(v)=R_{v,w}-\chi(B'(w))+\frac12P_{v,w}.$ But in this case
\begin{align*}
m^*(v)&=(R(v)+\frac12 P(v))\mathrm{P}_{}(v,v)+(R(w)+\frac12P(w))\mathrm{P}_{}(w,v)\\
&=R_{v,w}+\frac12P_{v,w}-(R(v)+\frac12P(v))\mathrm{P}_{}(v,w)-(R(w)+\frac12P(w))\mathrm{P}_{}(w,w)\\
&=R_{v,w}+\frac12P_{v,w}-\chi(B'(w))+(\chi(B(v))-R(v)-\frac12P(v))\mathrm{P}_{}(v,w)\\&\phantom{=}+(\chi(B(w))-R(w)-\frac12P(w))\mathrm{P}_{}(w,w)\\
&\ge R_{v,w}+\frac12P_{v,w}-\chi(B'(w)).
\end{align*}
Finally suppose $R\le \chi(B'(v))$, then $\ell(v)+\ell^P(v)=\frac12\left[\left\{P_{v,w}-\chi(B'(w))+R_{v,w}\right\}\vee0\right]$. If further $P_{v,w}>\chi(B'(w))-R_{v,w}$ then $\ell(v)+\ell^P(v)=\frac12(P_{v,w}+R_{v,w}-\chi(B'(w))).$ But in this case 
\begin{align*}
m^*(v)&\ge \frac12\left[(R(v)+P(v))\mathrm{P}(v,v)+(R(w)+P(w))\mathrm{P}(w,v)\right]\\
&=\frac12\big[R_{v,w}+P_{v,w}-\chi(B'(w))+(\chi(B(v))-R(v)-P(v))\mathrm{P}(v,w)\\&\phantom{=\frac12\big[}+(\chi(B(w))-R(w)-P(w))\mathrm{P}(w,w)\big]\\
&\ge \frac12[R_{v,w}+P_{v,w}-\chi(B'(w))].
\end{align*}
If instead $P_{v,w}\le \chi(B'(w))-R_{v,w}$, then $\ell(v)+\ell^P(v)=0\le m^*(v)$.
\end{proof}
\begin{proof}[Proof of Lemma~\ref{L:thetabounds}]
Recall that we suppose $\mathrm{P}(v,v),\,\mathrm{P}(w,v)\in[\eta,1-\eta]$ and that $\theta(v)$ is defined in \eqref{e:thetadef} which gives
\[
\theta(v)=\frac{u(v)+\frac12 u^P(v)-m^*(v)}{u(v)+\frac12 u^P(v)-\ell(v)-\frac12\ell^P(v)}.
\]
There are numerous cases to consider which we detail below. Our goal is to show that in each case $\theta(v)\in[\eta,1-\eta]$. Recall that $\chi(B(v))+\chi(B(w))=\chi(B'(v))+\chi(B'(w))=aB_{v,w}+2b$.

\underline{Case 1: $R_{v,w}>\chi(B'(v))\vee \chi(B'(w))$}\\
In this case $u(v)+\frac12u^P(v)=\chi(B'(v))$ and \[u(v)+\frac12 u^P(v)-\ell(v)-\frac12\ell^P(v)=\chi(B'(v))-(R_{v,w}-\chi(B'(w)))-\frac12P_{v,w}.\]
But \begin{align*}m^*(v)&=\chi(B'(v))-(\chi(B(v))-R(v)-\frac12P(v))\mathrm{P}(v,v)-(\chi(B(w))-R(w)-\frac12P(w))\mathrm{P}(w,v)\\
&\le \chi(B'(v))-\eta(aB_{v,w}+2b-R_{v,w}-\frac12P_{v,w})\\
&=u+\frac12u^P(v)-\eta(u(v)+\frac12 u^P(v)-\ell(v)-\frac12\ell^P(v)),
\end{align*}
thus $\theta(v)\ge\eta$.

On the other hand
\begin{align*}
m^*(v)&=R_{v,w}+\frac12P_{v,w}-(R(v)+\frac12P(v))\mathrm{P}(v,w)-(R(w)+\frac12P(w))\mathrm{P}(w,w)\\
&=R_{v,w}+\frac12P_{v,w}-\chi(B'(w))+(\chi(B(v))-R(v)-\frac12P(v))\mathrm{P}(v,w)\\
&\phantom{=}+(\chi(B(w))-R(w)-\frac12P(w))\mathrm{P}(w,w)\\
&\ge R_{v,w}+\frac12P_{v,w}-\chi(B'(w))+\eta(aB+2b-R_{v,w}-\frac12P_{v,w})\\
&=\ell(v)+\ell^P(v)+\eta(u(v)+\frac12 u^P(v)-\ell(v)-\frac12\ell^P(v)),
\end{align*}
thus $1-\theta(v)\ge\eta$.

\underline{Case 2: $R_{v,w}\le \chi(B'(v))\wedge \chi(B'(w))$}\\
We consider sub-cases.

 \underline{Case 2a: $P_{v,w}\le (\chi(B'(v))-R_{v,w})\wedge(\chi(B'(w))-R_{v,w})$}\\
In this case $u(v)+\frac12u^P(v)=R_{v,w}+\frac12 P_{v,w}$ and $\ell(v)+\frac12\ell^P(v)=0$. But $m^*(v)\ge\eta(R_{v,w}+\frac12 P_{v,w})$ and so $\theta(v)\le 1-\eta$. We also have $m^*(v)\le (1-\eta)(R_{v,w}+\frac12 P_{v,w})$ and so $\theta\ge \eta$.

\underline{Case 2b: $P_{v,w}\ge (\chi(B'(v))-R_{v,w})\vee(\chi(B'(w))-R_{v,w})$}\\
In this case $u(v)+\frac12u^P(v)=\frac12(\chi(B'(v))+R_{v,w})$ and $\ell(v)+\frac12\ell^P(v)=\frac12(P_{v,w}+R_{v,w}-\chi(B'(w)))$. Hence
\[
u(v)+\frac12u^P(v)-\ell(v)-\frac12\ell^P(v)=\frac12(aB_{v,w}+2b-P_{v,w}).
\]On the one hand
\begin{align}\notag
m^*(v)&=\frac12 R(v)\mathrm{P}(v,v)+\frac12 R(w)\mathrm{P}(w,v)+\frac12\left\{(R(v)+P(v))\mathrm{P}(v,v)+(R(w)+P(w))\mathrm{P}(w,v)\right\}\\\notag
&\ge \frac12\eta R_{v,w}+\frac12\left\{R_{v,w}+P_{v,w}-(R(v)+P(v))\mathrm{P}(v,w)-(R(w)+P(w))\mathrm{P}(w,w)\right\}\\\notag
&=\frac12\eta R_{v,w}+\frac12\big\{R_{v,w}+P_{v,w}-\chi(B'(w))+(\chi(B(v))-R(v)-P(v))\mathrm{P}(v,w)\\\notag
&\phantom{=\frac12\eta R_{v,w}+\frac12\big\{}+(\chi(B(w))-R(w)-P(w))\mathrm{P}(w,w)\big\}\\
&\ge \frac12\eta R_{v,w}+\frac12(R_{v,w}+P_{v,w}-\chi(B'(w)))+\frac12\eta(aB_{v,w}+2b-R_{v,w}-P_{v,w}).\label{e:m2b1}
\end{align}
Thus $m^*(v)-\ell(v)-\frac12\ell^P(v)\ge \frac12\eta(aB_{v,w}+2b-P_{v,w})$, and so $1-\theta(v)\ge \eta$.
On the other hand,
\begin{align*}
m^*(v)&=\frac12 R(v)\mathrm{P}(v,v)+\frac12 R(w)\mathrm{P}(w,v)+\frac12 \chi(B'(v))\\
&\phantom{=}-\frac12\big\{(\chi(B(v))-R(v)-P(v))\mathrm{P}(v,v)+(\chi(B(w))-R(w)-P(w))\mathrm{P}(w,v)\big\}\\
&\le \frac12\eta R_{v,w}+\frac12 \chi(B'(v))-\frac12\eta(aB_{v,w}+2b-R_{v,w}-P_{v,w})
\end{align*}
Thus \begin{align}\notag
u(v)+\frac12 u^P(v)-m^*(v)&\ge\frac12(1-\eta)R_{v,w}+\frac12\eta(aB_{v,w}+2b-R_{v,w}-P_{v,w})\\
&\ge \frac12\eta(aB_{v,w}+2b-P_{v,w})\label{e:m2b}
\end{align}
where we use $\eta<1/2$ in the last inequality. This gives $\theta(v)\ge \eta$.

\underline{Case 2c: $\chi(B'(v))-R_{v,w}\le P_{v,w}\le \chi(B'(w))-R_{v,w}$}\\
We have $u(v)+\frac12 u^P(v)=\frac12(\chi(B'(v))+R_{v,w})$ and $\ell(v)+\frac12\ell^P(v)=0$.  On the one hand $m^*(v)\ge \eta(R_{v,w}+\frac12 P_{v,w})=\frac12\eta R_{v,w}+\frac12\eta(R_{v,w}+P_{v,w})\ge \frac12\eta R_{v,w}+\frac12\eta \chi(B'(v))=\frac12\eta(R_{v,w}+\chi(B'(v)))$. Hence $\theta(v)\le 1-\eta$. On the other hand, as in~\eqref{e:m2b} we have $u(v)+\frac12 u^P(v)-m^*(v)\ge\frac12\eta(aB_{v,w}+2b-P_{v,w})$ which in this case becomes $u(v)+\frac12 u^P(v)-m^*(v)\ge\frac12\eta(\chi(B'(v))+R_{v,w})$. This gives $\theta(v)\ge \eta$.

\underline{Case 2d: $\chi(B'(w))-R_{v,w}\le P_{v,w}\le \chi(B'(v))-R_{v,w}$}\\
We have $u(v)+\frac12 u^P(v)=R_{v,w}+\frac12 P_{v,w}$ and $\ell(v)+\frac12\ell^P(v)=\frac12(R_{v,w}+P_{v,w}-\chi(B'(w)))$. As in~\eqref{e:m2b}, $u(v)+\frac12 u^P(v)-m^*(v)\ge\frac12\eta(aB_{v,w}+2b-P_{v,w})$ which gives $u(v)+\frac12 u^P(v)-m^*(v)\ge \frac12\eta(R_{v,w}+\chi(B'(w)))$, so $1-\theta(v)\ge\eta$. On the other hand, $m^*(v)\le (1-\eta)(R_{v,w}+\frac12 P_{v,w})$, but $R_{v,w}+\frac12 P_{v,w}\ge R_{v,w}+\frac12(\chi(B'(w))-\frac12 R_{v,w})=\frac12 (R_{v,w}+\chi(B'(w)))$ and so $m^*(v)\le R_{v,w}+\frac12 P_{v,w}-\frac{\eta}{2}(R_{v,w}+\chi(B'(w)))$ which gives $\theta(v)\ge \eta$.

\underline{Case 3: $\chi(B'(v))\le R_{v,w}\le \chi(B'(w))$}\\ 
In this case $u(v)+\frac12 u^P(v)=\chi(B'(v))$. We again consider sub-cases depending on the value of $P_{v,w}$.

\underline{Case 3a: $P_{v,w}\ge \chi(B'(w))-R_{v,w}$}\\
Then $\ell(v)+\frac12\ell^P(v)=\frac12(R_{v,w}+P_{v,w}-\chi(B'(w)))$ and so $u(v)+\frac12 u^P(v)-\ell(v)-\frac12\ell^P(v)=\chi(B'(v))-\frac12(R_{v,w}+P_{v,w}-\chi(B'(w)))$.
To show $1-\theta(v)\ge \eta$, we wish to show that $m^*(v)\ge \ell(v)+\frac12\ell^P(v)+\eta(u(v)+\frac12 u^P(v)-\ell(v)-\frac12\ell^P(v))$, i.e.\! that $m^*(v)\ge \eta \chi(B'(v))+\frac12(1-\eta)(R_{v,w}+P_{v,w}-\chi(B'(w)))$. As in~\eqref{e:m2b1} we have
\begin{align*}
m^*(v)&\ge \frac12 \eta R_{v,w}+\frac12\left\{R_{v,w}+P_{v,w}-\chi(B'(w))+\eta(aB_{v,w}+2b-R_{v,w}-P_{v,w})\right\}\\
&=\frac12(1-\eta)(R_{v,w}+P_{v,w}-\chi(B'(w)))+\frac12\eta\left\{R_{v,w}+P_{v,w}-\chi(B'(w))+aB_{v,w}+2b-P_{v,w}\right\}.
\end{align*}
But $R_{v,w}-\chi(B'(w))+aB_{v,w}+2b=R_{v,w}+\chi(B'(v))\ge 2\chi(B'(v))$, so  $m^*(v)\ge \eta \chi(B'(v))+\frac12(1-\eta)(R_{v,w}+P_{v,w}-\chi(B'(w)))$ as needed. On the other hand, to show $\theta(v)\ge\eta$, we need to show that $m^*(v)\le (1-\eta)\chi(B'(v))+\frac{\eta}{2}(P_{v,w}+R_{v,w}-\chi(B'(w)))$. We have
\begin{align*}
&m^*(v)\\&=(1-\eta)\big\{(R(v)+\frac12P(v))\mathrm{P}(v,v)+(R(w)+\frac12P(w))\mathrm{P}(w,v)\big\}\\
&\phantom{=}+\eta\big\{(R(v)+\frac12 P(v))\mathrm{P}(v,v)+(R(w)+\frac12P(w))\mathrm{P}(w,v)\big\}\\
&=(1-\eta)\big\{\chi(B'(v))-(\chi(B(v))-R(v)-\frac12 P(v))\mathrm{P}(v,v)-(\chi(B(w))-R(w)-\frac12P(w))\mathrm{P}(w,v))\big\}\\
&\phantom{=}+\eta\big\{R_{v,w}+\frac12P_{v,w}-(R(v)+\frac12P(v))\mathrm{P}(w,v)-(R(w)+\frac12 P(w))\mathrm{P}(w,w)\big\}\\
&\le (1-\eta)\chi(B'(v))\\&\phantom{=}+\eta\big\{R_{v,w}+\frac12P_{v,w}-\chi(B'(w))-(\chi(B(v))-R(v)-\frac12P(v))\mathrm{P}(w,v)\\&\phantom{\,=+\eta\big\{R_{v,w}+\frac12P_{v,w}-\chi(B'(w))}-(\chi(B(w))-R(w)-\frac12P(w))\mathrm{P}(w,w)\big\}\\
&\le(1-\eta)\chi(B'(v))+\eta(R_{v,w}+\frac12P_{v,w}-\chi(B'(w)))\\
&\le (1-\eta)\chi(B'(v))+\frac{\eta}{2}(R_{v,w}+P_{v,w}-\chi(B'(w))),
\end{align*}
as needed.

\underline{Case 3b: $P_{v,w}\le \chi(B'(w))-R_{v,w}$}\\
Here $u(v)+\frac12u^P(v)=\chi(B'(v))$ and $\ell(v)+\frac12\ell^P(v)=0$. On the one hand we have $m^*(v)\ge \eta(R_{v,w}+\frac12P_{v,w})\ge\eta R_{v,w}\ge\eta \chi(B'(v))$. This gives $\theta(v)\le 1-\eta$. On the other hand, we have 
\begin{align*}
m^*(v)&=\chi(B'(v))-(\chi(B(v))-R(v)-P(v))\mathrm{P}(v,v)-\frac12 P(v)\mathrm{P}(v,v)\\
&\phantom{\le \chi(B'(v))\,\,}-(\chi(B(w))-R(w)-P(w))\mathrm{P}(w,v)-\frac12P(w)\mathrm{P}(w,v)\\
&\le \chi(B'(v))-\eta(aB_{v,w}+2b-R_{v,w}-\frac12 P_{v,w})-\frac12\eta P_{v,w}\\
&\le \chi(B'(v))-\eta \chi(B'(v))\\
&=(1-\eta)\chi(B'(v)).
\end{align*}Thus it follows that $\theta(v)\ge \eta$.
 
\underline{Case 4: $\chi(B'(w))\le R_{v,w}\le \chi(B'(v))$}\\
This is the final main case, and it has two sub-cases.

\underline{Case 4a: $P_{v,w}\ge \chi(B'(v))-R_{v,w}$}\\
Here $u(v)+\frac12u^P(v)=\frac12(\chi(B'(v))+R_{v,w})$ and $\ell(v)+\frac12\ell^P(v)=R_{v,w}-\chi(B'(w))+\frac12P_{v,w}$. Thus $u(v)+\frac12u^P(v)-\ell(v)-\frac12\ell^P(v)=\chi(B'(w))+\frac12(\chi(B'(v))-R_{v,w}-P_{v,w})$.

On the one hand we want $\theta(v)\le 1-\eta$, which in this case is equivalent to $m^*(v)\ge (1-\frac{\eta}{2})R_{v,w}+\frac{\eta}{2}\chi(B'(v))+\frac{1-\eta}{2}P_{v,w}-(1-\eta)\chi(B'(w))$. We can obtain this bound since
\begin{align*}
m^*(v)&=R_{v,w}+\frac12P_{v,w}-\chi(B'(w))+(\chi(B(v))-R(v)-\frac12P(v))\mathrm{P}(v,w)\\
&\phantom{=}+(\chi(B(w))-R(w)-\frac12P(w))\mathrm{P}(w,w)\\
&\ge R_{v,w}+\frac12P_{v,w}-\chi(B'(w))+\eta(aB_{v,w}+2b-R_{v,w}-\frac12P_{v,w})\\
&=R_{v,w}+\frac12P_{v,w}-\chi(B'(w))+\eta(\chi(B'(v))+\chi(B'(w))-R_{v,w}-\frac12P_{v,w})\\
&=(1-\frac{\eta}{2})R_{v,w}+\frac{1-\eta}{2}P_{v,w}+\frac{\eta}{2}\chi(B'(v))-(1-\eta)\chi(B'(w))+\frac{\eta}{2}(\chi(B'(v))-R_{v,w}),
\end{align*}
and we obtain the desired bound using that $R_{v,w}\le \chi(B'(v))$.

On the other hand, we also need to show $\theta(v)\le \eta$, i.e.\! we need to show $m^*(v)\le\frac{1+\eta}{2}R_{v,w}+\frac{\eta}{2}P_{v,w}+\frac{1-\eta}{2}\chi(B'(v))-\eta \chi(B'(w))$. We have 
\begin{align*}
m^*(v)&=\frac12R(v)\mathrm{P}(v,v)+\frac12R(w)\mathrm{P}(w,v)+\frac12\left\{(R(v)+P(v))\mathrm{P}(v,v)+(R(w)+P(w))\mathrm{P}(w,v)\right\}\\
&\le \frac{1-\eta}{2}R_{v,w}+\frac12\big\{\chi(B'(v))-(\chi(B(v))-R(v)-P(v))\mathrm{P}(v,v)\\
&\phantom{\le\frac{1-\eta}{2}R_{v,w}+\frac12\big\{}-(\chi(B(w))-R(w)-P(w))\mathrm{P}(w,v)\big\}\\
&\le \frac{1-\eta}{2}R_{v,w}+\frac12\big\{\chi(B'(v))-\eta(aB_{v,w}+2b-R_{v,w}-P_{v,w})\big\}\\
&=\frac{1-\eta}{2}R_{v,w}+\frac{\eta}{2}P_{v,w}+\frac{1-\eta}{2}\chi(B'(v))-\frac{\eta}{2}\chi(B'(w))\\
&=\frac{1+\eta}{2}R_{v,w}+\frac{\eta}{2}P_{v,w}+\frac{1-\eta}{2}\chi(B'(v))-\eta \chi(B'(w))-\eta R_{v,w}+\frac{\eta}{2}\chi(B'(w))\\
&\le\frac{1+\eta}{2}R_{v,w}+\frac{\eta}{2}P_{v,w}+\frac{1-\eta}{2}\chi(B'(v))-\eta \chi(B'(w)),
\end{align*}
using that $R_{v,w}\ge \chi(B'(w))$ in the last inequality.

\underline{Case 4b: $P_{v,w}\le \chi(B'(v))-R_{v,w}$}\\
Here $u(v)+\frac12u^P(v)=R_{v,w}+\frac12 P_{v,w}$ and $\ell(v)+\frac12 \ell^P(v)=R_{v,w}-\chi(B'(w))+\frac12 P_{v,w}$, thus $u(v)+\frac12u^P(v)-\ell(v)-\frac12 \ell^P(v)=\chi(B'(w))$. Showing $\theta(v)\le 1-\eta$ is equivalent to showing $m^*(v)\ge R_{v,w}+\frac12 P_{v,w}-(1-\eta)\chi(B'(w))$. We have 
\begin{align*}
m^*(v)&=\chi(B'(v))-(\chi(B(v))-R(v)-P(v))\mathrm{P}(v,v)-\frac12 P(v)\mathrm{P}(v,v)\\
&\phantom{\le \chi(B'(v))\,\,}-(\chi(B(w))-R(w)-P(w))\mathrm{P}(w,v)-\frac12P(w)\mathrm{P}(w,v)\\
&\ge \chi(B'(v))-(1-\eta)(aB_{v,w}+2b-R_{v,w}-P_{v,w})-\frac{1-\eta}{2}P_{v,w}\\
&=R_{v,w}+\frac12P_{v,w}-(1-\eta)\chi(B'(w))+\eta(\chi(B'(v))-R_{v,w}),
\end{align*}and this shows the desired bound since $R_{v,w}\le \chi(B'(v))$. Showing $\theta(v)\ge\eta$ is equivalent to showing $m^*(v)\le R_{v,w}+\frac12 P_{v,w}-\eta \chi(B'(w))$. This holds since we have $m^*(v)\le (1-\eta)(R_{v,w}+\frac12P_{v,w})\le R_{v,w}+\frac12 P_{v,w}-\eta R_{v,w}$ and since $R_{v,w}\ge \chi(B'(w))$ this gives $m^*(v)\le R_{v,w}+\frac12 P_{v,w}-\eta \chi(B'(w))$ as needed.
\end{proof}
\begin{proof}[Proof of Lemma~\ref{L:pairres}]
We fix the configurations of black, red, and pink particles $B$, $R$, $P$ just before an update on $e=\{v,w\}$ and also the number of paired red $R^p_{v,w}$. Let $x=\ell(v)+\ell(w)-R^q_{v,w}$, where $R^q_{v,w}$ is the number of non-paired red particles on $e$. Then $x\vee0$ is the number of paired red particles needed for the lower bounds in Step 1 and so any particular paired red particle will be remaining in the pool after Step 1 with probability $1-(x\vee 0)/R^p_{v,w}$. Observe that $\chi(B'(v))+\chi(B'(w))\ge R_{v,w}+R^p_{v,w}$ (since each paired red particle on $e$ implies the existence of a unique paired white particle also on~$e$). We consider four cases. 

\underline{Case 1: $R_{v,w}>\chi(B'(v))\vee \chi(B'(w))$}\\
Then $x=2R_{v,w}-\chi(B'(v))-\chi(B'(w))-R^q_{v,w}\le 2R_{v,w}-(R_{v,w}+R^p_{v,w})-R^q_{v,w}=0,$ i.e.\! no paired red particles are needed for the lower bounds and they all remain in the pool after Step 1.

\underline{Case 2: $R_{v,w}\le \chi(B'(v))\wedge \chi(B'(w))$}\\
In this case $x=-R^q_{v,w}$ so all paired red particles remain in the pool.

\underline{Case 3: $\chi(B'(v))\le R_{v,w}\le \chi(B'(w))$}\\
Then $x=R_{v,w}-\chi(B'(v))-R^q_{v,w}=R^p_{v,w}-\chi(B'(v))$. We need to show that this is at most $(1-\gamma) R^p_{v,w}$. We are assuming that $\chi(B'(w))\le \chi(B'(v))/\gamma$. We also have that $\chi(B'(v))+\chi(B'(w))\ge 2R^p_{v,w}$ and thus $\chi(B'(v))\ge 2R^p_{v,w}/(1+1/\gamma)\ge \gamma R^p_{v,w}$ since $\gamma<1$. This gives the desired bound on $x$.

\underline{Case 4: $\chi(B'(w))\le R_{v,w}\le \chi(B'(v))$}\\
This case follows similarly to Case 3, switching the roles of $v$ and $w$.
\end{proof}

\section{Simulation}\label{A:sim}
For purposes of further elucidating the evolution of the chameleon process and its relationship to the MaBB, we present a possible trajectory of the two processes for two updates (for simplicity we suppose the first two edge-rings occur at times 1 and 2). In this example, the graph is the line on 7 vertices and $a=b=1$.

\begin{figure}[!h]\begin{center}\includegraphics[scale=1.2]{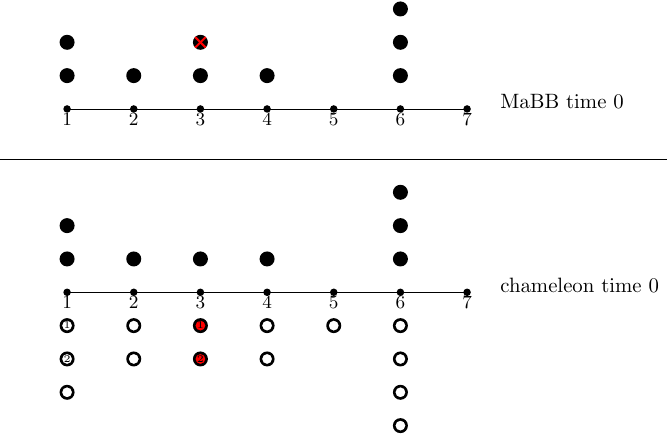}\caption{The initial configurations are shown as above. Observe that the non-marked particles in the MaBB are in the same configuration as the black particles in the chameleon and vertex 3 (which has the marked particle in the MaBB) has all its non-black particles in the chameleon configuration as red. As it is the start of a round, and as there are fewer red particles than white, we pair up each red particle with a unique white particle and label the paired particles (with the same label) to track the pairings.}\end{center}\end{figure}
\begin{figure}[!h]\begin{center}
\includegraphics[scale=1.2]{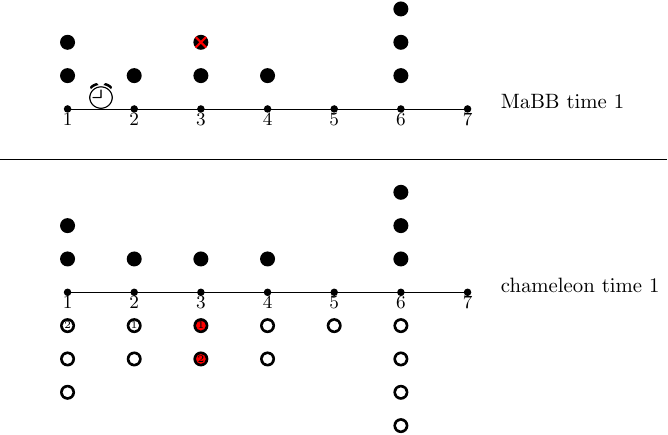}\caption{At time 1 edge $\{1,2\}$ rings and although this does not lead to a change in the non-marked particles, one of the labelled white particles moves as indicated.}\end{center}\end{figure}
\begin{figure}[!h]\begin{center}
\includegraphics[scale=1.2]{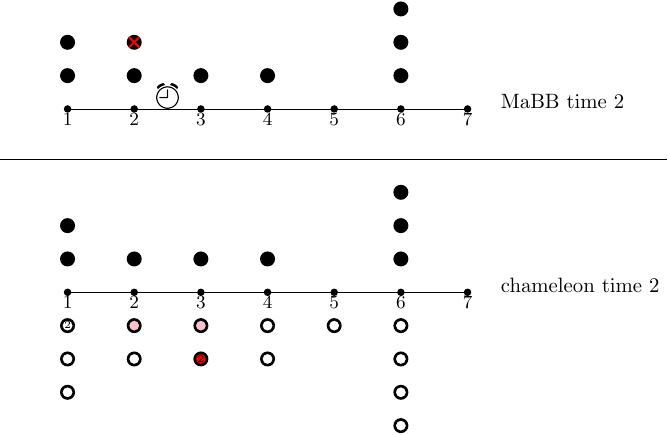}\caption{At time 2 edge $\{2,3\}$ rings and the marked particle ends up on vertex 2 in the MaBB. In the chameleon process there is a red-white pair of particles (with label 1) on the ringing edge, thus an opportunity for pink particles to be created which we see happen in this simulation. One pink particle is created on vertex 2 and the other on vertex 3. At the next depinking time these pink particles will either both become red or both become white.}\end{center}\end{figure}

\end{appendix}
\clearpage
\bibliography{reshuffling}

\end{document}